\DeclareFontFamily{U}{mathx}{\hyphenchar\font45}
\DeclareFontShape{U}{mathx}{m}{n}{
      <5> <6> <7> <8> <9> <10>
      <10.95> <12> <14.4> <17.28> <20.74> <24.88>
      mathx10
      }{}
\DeclareSymbolFont{mathx}{U}{mathx}{m}{n}
\DeclareMathAccent{\widecheck}{0}{mathx}{'161}
\newcommand{\Po}{\mathcal{P}}
\newcommand{\im}{\mathrm{im}}
\newcommand{\F}{\mathrm{F}}
\newcommand{\I}{\mathrm{I}}
\newcommand{\embeds}{\rightarrowtail}
\newcommand{\op}{\mathrm{op}}
\renewcommand{\hat}{\widehat}
\renewcommand{\check}{\widecheck}
\newcommand{\id}{\mathrm{id}}
\newcommand{\gen}[1]{\langle #1 \rangle}
\theoremstyle{plain}
\newtheorem{thm}{Theorem}
\newtheorem{lem}[thm]{Lemma}
\newtheorem{cor}[thm]{Corollary}
\newtheorem{prop}[thm]{Proposition}
\theoremstyle{definition}
\newtheorem{dfn}[thm]{Definition}
\newtheorem{exa}[thm]{Example}
\newtheorem{rem}[thm]{Remark}
\numberwithin{thm}{section}
\newcommand{\aidl}{\mathrm{aidl}}
\newcommand{\Lwav}{\mathbf{L_{\wedge,a\vee}}}
\newcommand{\Lvaw}{\mathbf{L_{\vee,a\wedge}}}
\newcommand{\lincol}{black}
\newcommand{\linth}{thick}
\newcommand{\po}[2][\pocol]{\filldraw[#1](#2) circle (2 pt);}
\newcommand{\li}[1]{\draw[\linth,\lincol] #1;}
\title[Distributive envelopes and topological duality for lattices]{Distributive envelopes and \\ topological duality for lattices \\ via canonical extensions}
\author[M. Gehrke]{Mai Gehrke}
\address{MG: LIAFA, CNRS and Universit\'e Paris Diderot, Paris 7\newline
Case 7014
F-75205 Paris Cedex 13\\
France}
\email{mgehrke@liafa.univ-paris-diderot.fr}
\author[S. J. van Gool]{Samuel J. van Gool}
\address{SVG: IMAPP, Radboud Universiteit Nijmegen and LIAFA, Universit\'e Paris Diderot, Paris 7 \newline
P.O. Box 9010
6500 GL Nijmegen\\
The Netherlands}
\email{samvangool@me.com}
\thanks{The authors would like to thank Andrew Craig and Hilary Priestley for their input in the initial stages of the project leading to this paper. The first-named author would like to thank ANR 2010 BLAN 0202 02 FREC for partial support during the later stages of writing this paper. 
The PhD research project of the second-named author has been made possible by NWO grant 617.023.815 of the Netherlands Organization for Scientific Research (NWO). The authors also thank the organizations of the conference TACL 2011 (Marseille) and the workshop Duality `12 (Oxford) for giving them the opportunity to present their work there.
}
\begin{document}

\begin{abstract}We establish a topological duality for bounded lattices. The two main features of our duality are that it generalizes Stone duality for bounded distributive lattices, and that the morphisms on either side are not the standard ones. A positive consequence of the choice of morphisms is that those on the topological side are functional. 

\noindent Towards obtaining the topological duality, we develop a universal construction which associates to an arbitrary lattice two distributive lattice envelopes with a Galois connection between them. This is a modification of a construction of the injective hull of a semilattice by Bruns and Lakser, adjusting their concept of `admissibility' to the finitary case. 

\noindent Finally, we show that the dual spaces of the distributive envelopes of a lattice coincide with completions of quasi-uniform spaces naturally associated with the lattice, thus giving a precise spatial meaning to the distributive envelopes.
\end{abstract}

\maketitle


\section{Introduction}
 \parindent 0pt	
Topological duality for Boolean algebras \cite{Sto1936} and distributive lattices \cite{Sto1937} is a useful tool for studying relational semantics for propositional logics. Canonical extensions \cite{Jon1951, Jon1952, Geh1994, GehHar2001} provide a way of looking at these semantics algebraically. In the absence of a satisfactory topological duality, canonical extensions have been used \cite{DGP2005} to treat relational semantics for substructural logics. The relationship between canonical extensions and topological dualities in the distributive case suggests that canonical extensions should be taken into account when looking for a topological duality for arbitrary bounded\footnote{From here on, we will drop the adjective `bounded', adopting the convention that all lattices considered in this paper are bounded.} lattices. The main aim of this paper is to investigate this line of research. 

Several different approaches to topological duality for lattices exist in the literature, starting from Urquhart \cite{Urq1978}. Important contributions were made, among others, by Hartung \cite{Hartung1992,Hartung1993}, who connected Urquhart's duality to the Formal Concept Analysis \cite{GanWil1999} approach to lattices. However, as we will show in Section~\ref{sec:duality} of this paper, a space which occurs in Hartung's duality can be rather ill-behaved. In particular, such a space need not be sober, and therefore it need not occur as the Stone dual space of any distributive lattice. By contrast, the spaces that occur in the duality in Section~\ref{sec:duality} of this paper are Stone dual spaces of certain distributive lattices that are naturally associated to the given lattice. In topological duality for lattices \cite{Hartung1993}, the morphisms in the dual category are necessarily relational rather than functional. In this paper, we exhibit a class of lattice morphisms for which the morphisms in the dual category can still be functional.

In order to obtain these results, we first develop a relevant piece of order theory that may be of independent interest. The ideas that play a role here originate with the construction of the injective hull of a semilattice \cite{Bru1970}, which, as it turns out, is a frame. In Section~\ref{sec:envelopes}, we recast this construction in the finitary setting to obtain a construction of a pair of distributive lattices from a given lattice, which we shall call the {\it distributive envelopes} of the lattice. Moreover, as we will also see in Section~\ref{sec:envelopes}, these two distributive envelopes correspond to the meet- and join-semilattice-reducts of the lattice of departure and are linked by a Galois connection whose lattice of Galois-closed sets is isomorphic to the original lattice. In Section~\ref{sec:duality}, we then use Stone-Priestley duality for distributive lattices \cite{Sto1937, Pri1970} and the theory of canonical extensions to find an appropriate category dual to the category of lattices, using this representation of a lattice as a pair of distributive lattices with a Galois connection between them. Particular attention is devoted to morphisms; the algebraic results from Section~\ref{sec:envelopes} will guide us towards a notion of `admissible morphism' between lattices, which have the property that their topological duals are functional. Finally, in Section~\ref{sec:uniform}, we will propose quasi-uniform spaces as an alternative to topology for studying set representations of lattices.


\section{Canonical extensions}
\label{sec:canext}
Canonical extensions \cite{Jon1951,Geh1994} provide an algebraic version of the Stone and Priestley dualities for distributive lattices. The algebraic characterization of the canonical extension as a particular lattice completion led to its generalization to arbitrary lattices in \cite{GehHar2001}. Recently, a ``canonical envelope'' for spatial preframes was constructed \cite[Section 9]{Ern2007}, which, in the case where the preframe is a coherent frame, coincides with the canonical extension of the distributive lattice of compact elements of that frame. In this section we recall the basic facts about canonical extensions that are relevant to this paper; our main reference will be \cite{GehHar2001}. 

\begin{dfn}
Let $L$ be a lattice. A {\it canonical extension} of $L$ is an embedding $e : L \embeds C$ of $L$ into a complete lattice $C$ satisfying
\begin{enumerate}
\item for all $u \in C$, we have 
\[ \bigvee \left\{\bigwedge e[S] \ | \ S \subseteq L, \bigwedge e[S] \leq u\right\} = u = \bigwedge \left\{\bigvee e[T] \ | \ T \subseteq L, u \leq \bigvee e[T]\right\};\] 
\item for all $S, T \subseteq L$, if $\bigwedge e[S] \leq \bigvee e[T]$ in $C$, then there are finite $S' \subseteq S$ and $T' \subseteq T$ such that $\bigwedge S' \leq \bigvee T'$ in $L$.
\end{enumerate}
The first property is commonly referred to as {\it denseness}, the second as {\it compactness}.
\end{dfn}

\begin{thm}\label{thm:canextexun}
Let $L$ be a lattice. There exists a canonical extension $e : L \embeds C$. Moreover, if $e : L \embeds C$ and $e' : L \embeds C'$ are canonical extensions of $L$, then there is a complete lattice isomorphism $\phi : C \to C'$ such that $\phi \circ e = e'$.
\end{thm}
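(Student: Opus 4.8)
The plan is to prove existence by an explicit construction from a polarity built out of the filters and ideals of $L$, and then to prove uniqueness by an abstract argument that uses only the denseness and compactness axioms. This cleanly separates the concrete work from the structural work, and makes clear that the two axioms are exactly what is needed to pin down $C$ up to isomorphism.

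For existence, I would consider the polarity $(\mathcal{F}(L), \mathcal{I}(L), R)$, where $\mathcal{F}(L)$ is the set of filters and $\mathcal{I}(L)$ the set of ideals of $L$, and $F \mathrel{R} I$ holds iff $F \cap I \neq \emptyset$. Taking $C$ to be the complete lattice of Galois-closed subsets of $\mathcal{F}(L)$ under the Galois connection induced by $R$, I would define $e : L \to C$ by sending $a$ to the Galois closure of $\{F \in \mathcal{F}(L) : a \in F\}$; one checks this is an order-embedding and that each $e(a)$ is simultaneously a closed and an open element. Density is verified by checking that every element of $C$ is both a join of images of filters (the \emph{closed} elements $\bigwedge e[S]$) and a meet of images of ideals (the \emph{open} elements $\bigvee e[T]$), which holds by construction. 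Compactness is essentially the definition of $R$: if $\bigwedge e[S] \leq \bigvee e[T]$, then the filter generated by $S$ meets the ideal generated by $T$, and extracting finite generating subsets yields the required $S', T'$ with $\bigwedge S' \leq \bigvee T'$ in $L$.

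For uniqueness, suppose $e : L \embeds C$ and $e' : L \embeds C'$ are both canonical extensions. Call $x \in C$ \emph{closed} if $x = \bigwedge e[F]$ for some filter $F$, and \emph{open} if $x = \bigvee e[I]$ for some ideal $I$; write $K(C)$, $O(C)$ for these, and similarly in $C'$. The first key step is that $F \mapsto \bigwedge e[F]$ is an order anti-isomorphism from filters to $K(C)$ and $I \mapsto \bigvee e[I]$ an order isomorphism from ideals to $O(C)$, yielding order-preserving bijections $\psi_K : K(C) \to K(C')$ and $\psi_O : O(C) \to O(C')$ compatible with $e, e'$. The second, crucial, step is that the comparison between a closed and an open element is dictated entirely by $L$: for a filter $F$ and ideal $I$, compactness shows $\bigwedge e[F] \leq \bigvee e[I]$ in $C$ iff $F \cap I \neq \emptyset$ iff $\bigwedge e'[F] \leq \bigvee e'[I]$ in $C'$. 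I would then define $\phi : C \to C'$ by $\phi(u) = \bigvee \{\psi_K(x) : x \in K(C),\ x \leq u\}$, using denseness to recover $u$ as the join of the closed elements below it, and note that $\phi \circ e = e'$.

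The main obstacle is verifying that $\phi$ is an \emph{order isomorphism}; once this is done, the fact that it is a complete lattice isomorphism is automatic, since order isomorphisms between complete lattices preserve all joins and meets. Monotonicity of $\phi$ is immediate. The heart of the matter is showing, for a closed element $x \in K(C)$, that $\psi_K(x) \leq \phi(u)$ holds in $C'$ if and only if $x \leq u$ in $C$. The forward-trivial direction is clear from the definition of $\phi$; for the converse one writes $u$ as the meet of the open elements $y$ above it, transfers each comparison $x \leq y$ across to $\psi_K(x) \leq \psi_O(y)$ using the closed--open incidence governed by $L$, and concludes $x \leq u$ by denseness. Constructing the inverse of $\phi$ symmetrically from $\psi_K^{-1}$ and applying the same incidence transfer then finishes the argument. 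I expect this incidence-transfer step, which rests squarely on compactness, to be the delicate point of the whole proof.
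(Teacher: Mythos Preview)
Your proposal is correct and is essentially the standard argument from Gehrke--Harding~\cite{GehHar2001}, which is precisely what the paper does: its entire proof is the one-line citation ``See \cite[Prop.~2.6 and 2.7]{GehHar2001}''. Thus there is nothing to compare against in the paper itself; you have reconstructed the content of the cited reference, with the filter--ideal polarity for existence and the closed/open incidence-transfer for uniqueness, and both halves are sound as outlined.
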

\begin{proof}
See \cite[Prop. 2.6 and 2.7]{GehHar2001}.
\end{proof}
As is common in the literature and justified by this theorem, we will speak of {\it the} canonical extension of a lattice $L$ and we will denote it by $L^\delta$. We will also omit reference to the embedding $e$, and regard $L$ as a sublattice of $L^\delta$. The closure of $L$ under infinite meet inside $L^\delta$ is isomorphic to the filter completion $\F(L)$ of $L$, and its elements are therefore known as the {\it filter elements} of $L^\delta$. Similarly, the join-closure of $L$ inside $L^\delta$ is isomorphic to the ideal completion $\I(L)$ of $L$ and consists of the {\it ideal elements} of $L^\delta$. The elements of $L$ are characterized in $L^\delta$ as exactly those which are both filter and ideal elements. See \cite[Lemma 3.3]{GehHar2001} for proofs of the facts mentioned in this paragraph.

To end this preliminary section we recall one more important fact, the proof of which relies on the axiom of choice. This proposition states that the canonical extension of a lattice `has enough points' and therefore it is crucial for obtaining set-theoretic representations of lattices via canonical extensions.
\begin{prop}[Canonical extensions are perfect lattices]\label{prop:canextperfect}
Let $L$ be a lattice. The set of completely join-irreducible elements $J^\infty(L^\delta)$ of the canonical extension $\bigvee$-generates $L^\delta$, and the set of completely meet-irreducible elements $M^\infty(L^\delta)$ of the canonical extension $\bigwedge$-generates in $L^\delta$.
\end{prop}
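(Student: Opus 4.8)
The statement is self-dual: interchanging $\leq$ with $\geq$, joins with meets, and filter elements with ideal elements carries the definition of a canonical extension to itself and carries $J^\infty(L^\delta)$ to $M^\infty(L^\delta)$. So it suffices to prove that $J^\infty(L^\delta)$ $\bigvee$-generates $L^\delta$, the meet-generation statement then following by applying the result to the order-dual lattice. The plan is to reduce this to the separation \emph{claim}: for every filter element $x$ and every ideal element $y$ with $x \not\leq y$, there is a completely join-irreducible $p \leq x$ with $p \not\leq y$. Granting the claim, fix $u \in L^\delta$ and set $v = \bigvee\{\, p \in J^\infty(L^\delta) : p \leq u \,\}$, so that $v \leq u$; I would show $v = u$. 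If not, then since denseness writes $u$ as a join of filter elements, some filter element $x \leq u$ has $x \not\leq v$, and since denseness writes $v$ as a meet of ideal elements, some ideal element $y \geq v$ has $x \not\leq y$. The claim yields $p \in J^\infty(L^\delta)$ with $p \leq x \leq u$ and $p \not\leq y$; but $p \leq u$ forces $p \leq v \leq y$, a contradiction.

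To prove the claim, I would apply Zorn's lemma to the set $P$ of filter elements $w$ with $w \leq x$ and $w \not\leq y$, ordered by the reverse of the order of $L^\delta$, and seek a $\leq$-minimal element of $P$. This set is nonempty, as $x$ belongs to it. Since the filter elements are exactly the meet-closure of $L$ in $L^\delta$, they are closed under arbitrary meets, so the meet of any chain in $P$ is again a filter element below $x$; the crucial point is that this meet is still not below $y$. This is where compactness enters: writing the chain's meet as $\bigwedge e[S]$ with $S$ the union of the filters corresponding to the members of the chain, and $y$ as $\bigvee e[T]$, compactness produces finite $S' \subseteq S$ and $T' \subseteq T$ with $\bigwedge S' \leq \bigvee T'$ in $L$. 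As $S'$ is finite and the filters form a chain under inclusion, $S'$ lies inside a single member $w_0$ of the chain, whence $w_0 \leq e(\bigwedge S') \leq \bigvee e[T'] \leq y$, contradicting $w_0 \not\leq y$. Hence every chain in $P$ has a lower bound in $P$, and Zorn produces a $\leq$-minimal filter element $p \leq x$ with $p \not\leq y$.

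Finally I would verify that this minimal $p$ is completely join-irreducible. By minimality, every filter element $w < p$ satisfies $w \leq y$. Using denseness to express an arbitrary $q < p$ as a join of filter elements, each of which is a filter element strictly below $p$ and hence below $y$, it follows that every $q < p$ satisfies $q \leq y$. Therefore $\bigvee\{\, q \in L^\delta : q < p \,\} \leq y$; since $p \not\leq y$, this join is distinct from, and hence strictly below, $p$, which is precisely the statement that $p$ is completely join-irreducible. The main obstacle is the compactness step inside the Zorn argument: one must set up the correspondence between filter elements and filters of $L$ carefully, so that the finite witness $S'$ can be located within a single link of the chain. This is the only place where the compactness axiom, rather than mere denseness, is genuinely used, and it is what makes the appeal to the axiom of choice essential.
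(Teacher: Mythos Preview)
Your argument is correct and is essentially the standard proof that the paper defers to via the citation of \cite[Lemma 3.4]{GehHar2001}: one uses Zorn's lemma on filter elements below $x$ and not below $y$, invokes compactness to show chains have lower bounds in that set, and then verifies via denseness that the minimal element so obtained is completely join-irreducible. The paper itself gives no independent proof, so there is nothing further to compare.
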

\begin{proof}
See \cite[Lemma 3.4]{GehHar2001}.
\end{proof}


\section{Distributive envelopes}\label{sec:envelopes}
In this section we introduce the two {\it distributive envelopes} $D^\wedge(L)$ and $D^\vee(L)$ of a lattice $L$. After giving the universal property defining the envelope, we will give both a point-free and a point-set construction of it, and investigate the categorical properties of the envelopes. Finally, we will show how the two distributive envelopes are linked by a Galois connection which enables one to recover the original lattice $L$. Some of the results in this section can be seen as finitary versions of the results on injective hulls of semilattices of Bruns and Lakser \cite{Bru1970}. We will relate our results to theirs in Remark~\ref{rem:comparebrunslakser}. However, the reader who is not familiar with \cite{Bru1970} should be able to read this section independently.

The following definition is central, being the finitary version of the definition of {\it admissible} given in \cite{Bru1970}.
\begin{dfn}
Let $L$ be a lattice. A finite subset $M \subseteq L$ is {\it join-admissible} if its join distributes over all meets with elements from $L$, i.e., if, for all $a \in L$,
\begin{equation*}
 a \wedge \bigvee M = \bigvee_{m \in M} (a \wedge m).
\end{equation*}
We say that a function $f : L_1 \to L_2$ between lattices {\it preserves admissible joins} if, for each finite join-admissible set $M \subseteq L_1$, we have $f(\bigvee M) = \bigvee_{m \in M} f(m)$.
\end{dfn}
The formal definition of the distributive $\wedge$-envelope is now as follows.
\begin{dfn}\label{def:distenv}
Let $L$ be a lattice. An embedding $\eta^\wedge_L : L \embeds D^\wedge(L)$ of $L$ into a distributive lattice $D^\wedge(L)$ which preserves meets and admissible joins is a {\it distributive $\wedge$-envelope} of $L$ if it satisfies the following universal property:
\begin{align*}
&\text{\it For any function $f : L \to D$ into a distributive lattice $D$ that preserves} \\
&\text{\it finite meets and admissible joins, there exists a unique lattice homomorphism}\\
&\text{\it $\hat{f} : D^\wedge(L) \to D$ such that $\hat{f} \circ \eta^\wedge_L = f$, i.e., the following diagram commutes:}
\end{align*}

\vspace{-4mm}

\begin{center}
\begin{tikzpicture}
\matrix (m) [matrix of math nodes, row sep=2.5em, column sep=2.5em, text height=1.5ex, text depth=0.25ex] 
{L & D^\wedge(L) \\
    &  D \\};

\path[->] (m-1-1) edge node[below] {$f$} 
(m-2-2); 
\draw[->,dashed] (m-1-2) edge node[right] {$\hat{f}$} node[left] {$!$} (m-2-2);
\path[>->] (m-1-1) edge node[auto] {$\eta^\wedge_L$} (m-1-2);
\end{tikzpicture}
\end{center}

\vspace{-2mm}

The definition of the distributive $\vee$-envelope $D^\vee(L)$ of $L$ is order dual, cf. Remark~\ref{rem:orderdual} below. 
\end{dfn}
Let us give some intuition for the above definitions. The join-admissible subsets of $L$ are those subsets whose joins `are already distributive' in $L$. A distributive $\wedge$-envelope of a lattice $L$ is a universal solution to the question of embedding $L$ as a $\wedge$-semilattice into a distributive lattice while preserving all admissible joins. Clearly, a non-admissible join can not be preserved by any $\wedge$-embedding into a distributive lattice; in this sense, a distributive $\wedge$-envelope adds `as few joins as possible' to make $L$ distributive.

The main aim of this section is to show that the distributive $\wedge$-envelope of a lattice always exists (Theorem~\ref{thm:envelopeexists}); it is then clearly unique up to isomorphism. The same results of course hold for the distributive $\vee$-envelope. In proving these theorems, two different representations of $D^\wedge(L)$ will be useful, one is point-free, the other uses the set of `points' $J^\infty(L^\delta)$ of the canonical extension of $L$.

We first give the point-free construction of the distributive $\wedge$-envelope $D^\wedge(L)$ of $L$. 

To construct $D^\wedge(L)$, we want to `add joins' to $L$. This can of course be done with ideals. In the case of $D^\wedge(L)$ the required ideals will be closed under admissible joins. We thus define ``a-ideals'' as follows.

\begin{dfn}
A subset $A \subseteq L$ is called an {\it a-ideal} if (i) $A$ is a downset, i.e., if $a \in A$ and $b \leq a$ then $b \in A$, and (ii) $A$ is closed under admissible joins, i.e., if $M \subseteq A$ is join-admissible, then $\bigvee M \in A$.
\end{dfn}
This definition is a special case of a {\it $\mathcal{Z}$-join ideal} in the sense of, e.g., \cite{ErnZha2001}, and it would be interesting to see if the results in this section could be obtained using the general ideas from that line of research.

Note that any (lattice) ideal of a lattice $L$ is in particular an a-ideal. Moreover, the poset $\aidl(L)$ of all a-ideals of $L$ is a closure system: any intersection of a-ideals is again an a-ideal. Therefore, for any subset $T$ of $L$, there exists a smallest a-ideal containing $T$. We will denote this a-ideal by $\gen{T}_{ai}$ and call it {\it the a-ideal generated by $T$}. As usual, we say that an a-ideal $A$ is {\it finitely generated} if there is a finite set $T$ such that $A = \gen{T}_{ai}$. 
Note that, in a distributive lattice $D$, all joins are admissible, and a-ideals coincide with lattice ideals.
The finitely generated a-ideals form a distributive lattice which will be (up to isomorphism) the distributive $\wedge$-envelope of $L$, cf. Theorem~\ref{thm:envelopeexists} below. It is possible to prove Theorem~\ref{thm:envelopeexists} directly, in a manner similar to the proof of \cite[Theorem 2]{Bru1970}. We give an alternative proof using the canonical extension and the `point-set' intuition that it provides. To this end we first show that, from the perspective of the canonical extension $L^\delta$, a set is join-admissible if, and only if, the join-irreducibles behave like join-primes with respect to the join of that set. 
\begin{lem}\label{lem:joinadmcanext}
Let $L$ be a lattice and $M \subseteq L$ a finite subset. The following are equivalent:
\begin{enumerate}
\item $M$ is join-admissible;
\item For any $x \in J^\infty(L^\delta)$, if $x \leq \bigvee M$, then $x \leq m$ for some $m \in M$.
\end{enumerate}
\end{lem}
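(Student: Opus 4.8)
The plan is to prove the two implications separately, always treating $\bigvee M$ as an element of $L$ (it is one, since $M$ is finite) and computing all finite meets and joins inside $L^\delta$, where they agree with those of $L$ because $L \embeds L^\delta$ is a lattice embedding. For the direction (ii) $\Rightarrow$ (i), I would fix $a \in L$ and check only $a \wedge \bigvee M \leq \bigvee_{m \in M}(a \wedge m)$, the reverse inequality being automatic. Since $a \wedge \bigvee M$ lies in $L \subseteq L^\delta$ and $J^\infty(L^\delta)$ $\bigvee$-generates $L^\delta$ by Proposition~\ref{prop:canextperfect}, it suffices to show that every $x \in J^\infty(L^\delta)$ below $a \wedge \bigvee M$ lies below $\bigvee_{m \in M}(a \wedge m)$. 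Such an $x$ satisfies $x \leq a$ and $x \leq \bigvee M$, so (ii) gives $x \leq m$ for some $m$, whence $x \leq a \wedge m \leq \bigvee_{m \in M}(a \wedge m)$. This direction is routine.

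The substance is in (i) $\Rightarrow$ (ii). Fixing $x \in J^\infty(L^\delta)$ with $x \leq \bigvee M$, my reduction is to prove the identity $x = \bigvee_{m \in M}(x \wedge m)$ in $L^\delta$: once this is known, complete join-irreducibility of $x$ applied to this finite join forces $x = x \wedge m$ for some $m$, i.e.\ $x \leq m$, as required. As $\bigvee_{m \in M}(x \wedge m) \leq x \wedge \bigvee M = x$ is clear, everything comes down to the inequality $x \leq \bigvee_{m \in M}(x \wedge m)$.

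To prove this I would argue by contradiction, exploiting that $x$ is a filter element, which follows from denseness together with complete join-irreducibility (every element is the join of the filter elements below it, and an irreducible such join must contain $x$ itself), cf.\ \cite[Lemma 3.4]{GehHar2001}. Write $x = \bigwedge e[F]$ for the filter $F = \{a \in L : x \leq a\}$ and set $q = \bigvee_{m \in M}(x \wedge m)$. If $x \not\leq q$, then by denseness ($q$ is the meet of the ideal elements above it) there is an ideal element $j = \bigvee e[I]$, with $I$ an ideal of $L$, such that $q \leq j$ but $x \not\leq j$. For each $m \in M$ we have $x \wedge m = \bigwedge e[F \cup \{m\}] \leq j = \bigvee e[I]$, so compactness, applied with $S = F \cup \{m\}$ and $T = I$, yields finite $F' \subseteq F$ and $I' \subseteq I$ with $(\bigwedge F') \wedge m \leq \bigvee I'$; writing $f_m := \bigwedge F' \in F$ (so $x \leq f_m$) and noting $\bigvee I' \in I$, we get $f_m \wedge m \in I$. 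Putting $f := \bigwedge_{m \in M} f_m \in L$, we obtain $x \leq f$ and $f \wedge m \in I$ for every $m$. Now the admissibility hypothesis, applied to the element $f \in L$, gives $f \wedge \bigvee M = \bigvee_{m \in M}(f \wedge m)$, whose right-hand side lies in $I$ since $I$ is closed under finite joins; hence $f \wedge \bigvee M \leq j$. But $x \leq f$ and $x \leq \bigvee M$ give $x \leq f \wedge \bigvee M \leq j$, contradicting $x \not\leq j$.

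I expect the main obstacle to be precisely this last step, and its root is that $L^\delta$ need not be distributive, so one cannot simply distribute the meet with $x$ over $\bigvee M$ to read off $x = \bigvee_{m \in M}(x \wedge m)$ directly. The device that circumvents this is the compactness property of the canonical extension, which lets me replace the filter element $x$ by an element $f$ of $L$ lying above it, at which point the admissibility hypothesis—a statement about elements of $L$ only—can legitimately be invoked. Transporting admissibility from $L$ up to the point $x \in L^\delta$ in this controlled way is the crux of the argument.
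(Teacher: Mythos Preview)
Your proof is correct and follows essentially the same route as the paper's. Both directions match: for (ii)$\Rightarrow$(i) you use that $J^\infty(L^\delta)$ join-generates $L^\delta$, and for (i)$\Rightarrow$(ii) you reduce to $x=\bigvee_{m\in M}(x\wedge m)$, then use denseness (via ideal elements), compactness (to replace the filter element $x$ by some $f=\bigwedge_{m} f_m\in L$ above it), and finally apply join-admissibility to $f$; the paper does exactly this, only phrased directly (``let $y$ be any ideal element above $x'$'') rather than by contradiction.
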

\begin{proof}
(i) $\Rightarrow$ (ii). Suppose $M$ is join-admissible, and let $x \in J^\infty(L^\delta)$ such that $x \leq \bigvee M$. Define $x' := \bigvee_{m \in M} (x \wedge m)$. It is obvious that $x' \leq x$. We show that $x \leq x'$. Let $y$ be an ideal element of $L^\delta$ such that $x' \leq y$. Then, for each $m \in M$, we have $x \wedge m \leq y$, so by compactness, there exists $a_m \in L$ such that $x \leq a_m$ and $a_m \wedge m \leq y$. Let $a := \bigwedge_{m \in M} a_m$. Since $M$ is join-admissible, we get
\[x \leq a \wedge \bigvee M = \bigvee_{m \in M} (a \wedge m) \leq \bigvee_{m \in M} (a_m \wedge m) \leq y.\]
Since $y$ was an arbitrary ideal element above $x'$, by one of the equivalent properties of denseness (\cite[Lemma 2.4]{GehHar2001}) we conclude that $x \leq x'$. So $x = x' = \bigvee_{m \in M} (x \wedge m)$. Since $x$ is join-irreducible, we get $x = x \wedge m$ for some $m \in M$, so $x \leq m$.

(ii) $\Rightarrow$ (i). Let $a \in L$ be arbitrary. Because the other inequality is obvious, we only need to show that $a \wedge \bigvee M \leq \bigvee_{m \in M} (a \wedge m)$ holds in $L$. We show the inequality holds in $L^\delta$ and use that $L \embeds L^\delta$ is an embedding. Let $x \in J^\infty(L^\delta)$ such that $x \leq a \wedge \bigvee M$. By (ii), pick $m \in M$ such that $x \leq m$. Then $x \leq a \wedge m$, which is below $\bigvee_{m \in M} (a \wedge m)$. Since $x \in J^\infty(L^\delta)$ was arbitrary, by Proposition~\ref{prop:canextperfect} we conclude $a \wedge \bigvee M \leq \bigvee_{m \in M} (a \wedge m)$.
\end{proof}
The above lemma will be our main tool in studying admissible sets. It is a typical example of the usefulness of canonical extensions: one can formulate an algebraic property (join-admissibility) in a spatial manner (using the `points', i.e., completely join-irreducibles, of the canonical extension).

Note that the same proof goes through without the restriction that $M$ is finite, if one extends the definition of join-admissibility to include infinite sets. We will not expand on this point here, because we will only need the result for finite sets, but we merely note that this observation can be used to give an alternative proof of the results in \cite{Bru1970}.

Let us define, for any $a \in L$, $\hat{a} := \{x \in J^\infty(L^\delta) : x \leq a\}$. Lemma~\ref{lem:joinadmcanext} then says that $M$ is join-admissible if, and only if, $\hat{\bigvee M} = \bigcup_{m \in M} \hat{m}$. We can use this formulation to obtain the following characterization of the a-ideal generated by a finite subset.
\begin{lem}\label{lem:gencriterium}
Let $L$ be a lattice, $T \subseteq L$ a finite subset and $b \in L$. The following are equivalent: 
\begin{enumerate}
\item $b \in \gen{T}_{ai}$;
\item $\hat{b} \subseteq \bigcup_{a \in T} \hat{a}$;
\item There exists a finite join-admissible $M \subseteq {\downarrow}T$ such that $b = \bigvee M$.
\end{enumerate}
\end{lem}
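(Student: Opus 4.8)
The plan is to prove the three conditions equivalent cyclically, $(i) \Rightarrow (ii) \Rightarrow (iii) \Rightarrow (i)$, using Lemma~\ref{lem:joinadmcanext} to pass between join-admissibility and the behaviour of the sets $\hat{\,\cdot\,}$, together with the elementary identity $\hat{b \wedge a} = \hat{b} \cap \hat{a}$ (valid because a join-irreducible lies below $b \wedge a$ iff it lies below both $b$ and $a$). The implication $(iii) \Rightarrow (i)$ is immediate from the definitions: if $b = \bigvee M$ with $M \subseteq {\downarrow}T$ finite and join-admissible, then $\gen{T}_{ai}$, being a downset containing $T$, contains $M$, and being closed under admissible joins, contains $\bigvee M = b$.

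For $(i) \Rightarrow (ii)$, I would show that the set $A := \{c \in L : \hat{c} \subseteq \bigcup_{a \in T} \hat{a}\}$ is an a-ideal containing $T$; minimality of $\gen{T}_{ai}$ then forces $\gen{T}_{ai} \subseteq A$, which is exactly the claim. That $T \subseteq A$ and that $A$ is a downset follow at once from the monotonicity of $a \mapsto \hat{a}$. Closure under admissible joins is where Lemma~\ref{lem:joinadmcanext} does the work: if $M \subseteq A$ is join-admissible then $\hat{\bigvee M} = \bigcup_{m \in M} \hat{m} \subseteq \bigcup_{a \in T} \hat{a}$, so $\bigvee M \in A$.

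The crux is $(ii) \Rightarrow (iii)$. Assuming $\hat{b} \subseteq \bigcup_{a \in T} \hat{a}$, I would take the explicit witness $M := \{b \wedge a : a \in T\}$, which is finite and contained in ${\downarrow}T$, and whose join satisfies $\bigvee M \leq b$. First I verify that $M$ is join-admissible through Lemma~\ref{lem:joinadmcanext}: if $x \in J^\infty(L^\delta)$ and $x \leq \bigvee M \leq b$, then $x \in \hat{b}$, so by $(ii)$ there is $a \in T$ with $x \leq a$, whence $x \leq b \wedge a \in M$. Knowing $M$ is join-admissible, I then compute $\hat{\bigvee M} = \bigcup_{a \in T} \hat{b \wedge a} = \hat{b} \cap \bigcup_{a \in T} \hat{a} = \hat{b}$, where the first equality is Lemma~\ref{lem:joinadmcanext} and the last uses $(ii)$. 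Since $J^\infty(L^\delta)$ $\bigvee$-generates $L^\delta$ (Proposition~\ref{prop:canextperfect}), two elements with the same join-irreducibles below them coincide, so $\bigvee M = b$, giving $(iii)$.

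I expect the main obstacle to be picking the right witnessing set $M$ and managing the apparent circularity in its analysis: join-admissibility of $M$ must be secured \emph{before} one knows $\bigvee M = b$. This is why I phrase the admissibility check using only the trivial inequality $\bigvee M \leq b$ together with hypothesis $(ii)$, and postpone the identity $\bigvee M = b$ to the final $\bigvee$-generation step, once $\hat{\bigvee M} = \hat{b}$ has been established.
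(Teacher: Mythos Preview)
Your proof is correct and follows essentially the same route as the paper: the same cyclic implications, the same a-ideal $A$ for $(i)\Rightarrow(ii)$, and the same witness $M=\{b\wedge a: a\in T\}$ for $(ii)\Rightarrow(iii)$. The only cosmetic difference is that in $(ii)\Rightarrow(iii)$ the paper runs a single chain $\hat{b}=\hat{b}\cap\bigcup_{a\in T}\hat{a}=\bigcup_{m\in M}\hat{m}\subseteq\hat{\bigvee M}\subseteq\hat{b}$ and reads off both $b=\bigvee M$ and join-admissibility at once, whereas you establish admissibility first via Lemma~\ref{lem:joinadmcanext} and then deduce $\hat{\bigvee M}=\hat{b}$; the content is the same.
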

\begin{proof}
(i) $\Rightarrow$ (ii). Note that $A := \{b \in L : \hat{b} \subseteq \bigcup_{a \in T} \hat{a}\}$ is an a-ideal which contains $T$: it is clearly a downset, and it is closed under admissible joins, using Lemma~\ref{lem:joinadmcanext}. Hence, $b \in \gen{T}_{ai} \subseteq A$, as required.

(ii) $\Rightarrow$ (iii). Let $M := \{b \wedge a \ | \ a \in T\}$. We claim that $b = \bigvee M$ and $M$ is join-admissible. Note that $\bigvee M \leq b$, so $\hat{\bigvee M} \subseteq \hat{b}$. Using (iii), we also get:
\[\hat{b} = \hat{b} \cap \bigcup_{a \in T} \hat{a} = \bigcup_{a \in T} (\hat{b} \cap \hat{a}) = \bigcup_{a \in T} \hat{b \wedge a} = \bigcup_{m \in M} \hat{m} \subseteq \hat{\bigvee M} \subseteq \hat{b}.\]
Therefore, equality holds throughout, and in particular we have that $b = \bigvee M$ and $\bigcup_{m \in M} \hat{m} = \hat{\bigvee M}$, so that $M$ is join-admissible by Lemma~\ref{lem:joinadmcanext}.

(iii) $\Rightarrow$ (i). Since $\gen{T}_{ai}$ is a downset containing $T$, $\gen{T}_{ai}$ contains $M$, and therefore, being closed under admissible joins, it contains $b = \bigvee M$.
\end{proof}
We can now give the following set-representation of the poset of finitely generated a-ideals.
\begin{prop}\label{prop:isodwedge}
Let $\phi$ be the function which sends a finitely generated a-ideal $A = \gen{T}_{ai}$ to the set $\bigcup_{a \in T} \hat{a}$. Then $\phi$ is a well-defined order isomorphism between the poset of finitely generated a-ideals and the sublattice of $\Po(J^\infty(L^\delta))$ generated by the collection $\{\hat{a} \ | \ a \in L\}$.
\end{prop}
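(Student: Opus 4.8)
The plan is to check three things in turn: that $\phi$ is well-defined (independent of the chosen finite generating set), that it is an order embedding, and that its image is precisely the sublattice of $\Po(J^\infty(L^\delta))$ generated by $\{\hat{a} \mid a \in L\}$. The observation that makes everything else routine is a generator-free description of $\phi$: for a finitely generated a-ideal $A = \gen{T}_{ai}$ I claim
\[ \phi(A) = \bigcup_{a \in T} \hat{a} = \bigcup_{b \in A} \hat{b}. \]
The inclusion from left to right is immediate since $T \subseteq A$, and the reverse inclusion is exactly the implication (i)$\Rightarrow$(ii) of Lemma~\ref{lem:gencriterium}, which gives $\hat{b} \subseteq \bigcup_{a \in T}\hat{a}$ for every $b \in A = \gen{T}_{ai}$. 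Since the rightmost expression makes no reference to $T$, well-definedness follows at once.

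Next I would prove that $\phi$ is an order embedding. That $A \subseteq B$ implies $\phi(A) \subseteq \phi(B)$ is clear from $\phi(A) = \bigcup_{b \in A}\hat{b}$. For the converse, suppose $\phi(A) \subseteq \phi(B)$ with $B = \gen{S}_{ai}$; then for each $b \in A$ we have $\hat{b} \subseteq \phi(A) \subseteq \phi(B) = \bigcup_{c \in S}\hat{c}$, so the implication (ii)$\Rightarrow$(i) of Lemma~\ref{lem:gencriterium} places $b$ in $\gen{S}_{ai} = B$. Hence $A \subseteq B$, so $\phi$ reflects order and is in particular injective.

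Finally I would identify the image. By construction the image is $\{\bigcup_{a \in T}\hat{a} \mid T \subseteq L \text{ finite}\}$. This collection contains each $\hat{a}$ (take $T = \{a\}$), is closed under finite unions by concatenating index sets, and is closed under finite intersections because $\hat{a} \cap \hat{c} = \hat{a \wedge c}$ (as $x \leq a$ and $x \leq c$ iff $x \leq a \wedge c$), so distributing an intersection of two finite unions over its terms again yields a finite union of sets of the form $\hat{\cdot}$. Thus it is a sublattice of $\Po(J^\infty(L^\delta))$ containing all the $\hat{a}$, while conversely any such sublattice must contain all these finite unions; hence the image equals the generated sublattice. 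Combined with the previous step, $\phi$ is then an order isomorphism onto it.

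The main obstacle is essentially already dispatched by Lemma~\ref{lem:gencriterium}: once that characterization is in hand, the remaining argument is bookkeeping. The only other load-bearing facts are the generator-free rewriting of $\phi$ and the meet-to-intersection identity $\hat{a}\cap\hat{c} = \hat{a\wedge c}$, which is precisely what lets me close the image under intersection and recognize it as a full sublattice rather than merely a join-subsemilattice.
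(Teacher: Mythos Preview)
Your proof is correct and follows essentially the same route as the paper: both use Lemma~\ref{lem:gencriterium} to establish well-definedness and order-reflection, and both use the identity $\hat{a}\cap\hat{c}=\hat{a\wedge c}$ together with distributivity in $\Po(J^\infty(L^\delta))$ to identify the image with the generated sublattice. Your explicit generator-free description $\phi(A)=\bigcup_{b\in A}\hat{b}$ is a clean way to package the well-definedness argument, but it is the same content as the paper's direct comparison of two generating sets.
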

\begin{proof}
Let $T$ and $U$ be finite subsets of $L$. Note that if $\gen{T}_{ai} = A = \gen{U}_{ai}$, then in particular $b \in \gen{T}_{ai}$ for each $b \in U$, so $\hat{b} \subseteq \bigcup_{a \in T} \hat{a}$ by Lemma~\ref{lem:gencriterium}. Hence, $\bigcup_{b \in U} \hat{b} \subseteq \bigcup_{a \in T} \hat{a}$. The proof of the other inclusion is symmetric, so indeed $\bigcup_{a \in T} \hat{a} = \bigcup_{b \in U} \hat{b}$, and $\phi$ is well defined. This argument also shows that $\phi$ is order preserving. Finally, if $\phi(\gen{U}_{ai}) = \bigcup_{b \in U} \hat{b} \subseteq \bigcup_{a \in T} \hat{a} = \phi(\gen{T}_{ai})$, then for each $b \in U$ we have $\hat{b} \subseteq \bigcup_{a \in T} \hat{a}$, so by Lemma~\ref{lem:gencriterium} we get $b \in \gen{T}_{ai}$. Since this holds for each $b \in U$, we get $\gen{U}_{ai} \subseteq \gen{T}_{ai}$, so $\phi$ is order reflecting. To see that $\phi$ is surjective, note first that, for any finite subset $T \subseteq L$, we have
$\bigcap_{a \in T} \hat{a} = \hat{\bigwedge T}.$
Hence, if $B$ is an arbitrary element of the sublattice generated by the sets $\hat{a}$, then, using distributivity in $\Po(J^\infty(L^\delta))$, we can write $B = \bigcup_{a \in T} \hat{a}$ for some finite set $T \subseteq L$. Now $B = \phi(\gen{T}_{ai})$.
\end{proof}
Note that this proposition implies in particular that the finitely generated a-ideals form a distributive lattice. In this lattice, the join of two finitely generated a-ideals is the a-ideal generated by the union of the sets of generators. The meet is simply given by intersection, as we will prove now.
\begin{prop}\label{prop:genintersection}
Let $L$ be an arbitrary lattice, and let $T$ and $U$ be finite subsets of $L$. Then
\[ \gen{T}_{ai} \cap \gen{U}_{ai} = \gen{t \wedge u \ | \ t \in T, u \in U}_{ai}.\]
In particular, the intersection of two finitely generated a-ideals is again finitely generated.
\end{prop}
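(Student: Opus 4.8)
The plan is to reduce the statement to a routine computation in the powerset lattice $\Po(J^\infty(L^\delta))$, using the characterization of generated a-ideals from Lemma~\ref{lem:gencriterium}. Write $V := \{t \wedge u \mid t \in T, u \in U\}$, a finite subset of $L$. The inclusion $\gen{V}_{ai} \subseteq \gen{T}_{ai} \cap \gen{U}_{ai}$ is the easy one: for $t \in T$ and $u \in U$ we have $t \wedge u \leq t$, and since $t \in \gen{T}_{ai}$ and an a-ideal is a downset, $t \wedge u \in \gen{T}_{ai}$; symmetrically $t \wedge u \in \gen{U}_{ai}$. Hence $V \subseteq \gen{T}_{ai} \cap \gen{U}_{ai}$, and since an intersection of a-ideals is again an a-ideal, the smallest a-ideal containing $V$ is contained in $\gen{T}_{ai} \cap \gen{U}_{ai}$.

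For the reverse inclusion I would argue pointwise via the sets $\hat{a}$. First note the identity $\hat{t} \cap \hat{u} = \hat{t \wedge u}$, which is immediate from the definition of $\hat{\cdot}$, since $x \leq t$ and $x \leq u$ hold iff $x \leq t \wedge u$. Now take $b \in \gen{T}_{ai} \cap \gen{U}_{ai}$. By Lemma~\ref{lem:gencriterium}, $b \in \gen{T}_{ai}$ gives $\hat{b} \subseteq \bigcup_{t \in T} \hat{t}$, and $b \in \gen{U}_{ai}$ gives $\hat{b} \subseteq \bigcup_{u \in U} \hat{u}$; hence $\hat{b}$ is contained in the intersection of these two unions. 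Using distributivity of $\Po(J^\infty(L^\delta))$ together with the identity above,
\[ \left(\bigcup_{t \in T} \hat{t}\right) \cap \left(\bigcup_{u \in U} \hat{u}\right) = \bigcup_{t \in T,\, u \in U} (\hat{t} \cap \hat{u}) = \bigcup_{t \in T,\, u \in U} \hat{t \wedge u} = \bigcup_{v \in V} \hat{v}. \]
Thus $\hat{b} \subseteq \bigcup_{v \in V} \hat{v}$, and another application of Lemma~\ref{lem:gencriterium} (to the finite set $V$) yields $b \in \gen{V}_{ai}$. This proves $\gen{T}_{ai} \cap \gen{U}_{ai} \subseteq \gen{V}_{ai}$, and combining the two inclusions gives the desired equality. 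Since $V$ is finite, the `in particular' clause follows at once.

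The two inclusions are of rather different character, and I expect the inclusion $\gen{T}_{ai} \cap \gen{U}_{ai} \subseteq \gen{V}_{ai}$ to be the only step with genuine content: it is exactly here that one must pass to the canonical extension, since the result cannot be read off directly from the order-theoretic definition of a-ideals without knowing how admissible joins interact with meets. All of that content, however, has already been isolated in Lemma~\ref{lem:joinadmcanext} and Lemma~\ref{lem:gencriterium}, so once those are in hand the argument is a short computation; the only points requiring care are the elementary identity $\hat{t} \cap \hat{u} = \hat{t \wedge u}$ and the use of distributivity in the powerset lattice.
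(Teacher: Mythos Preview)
Your proof is correct and follows essentially the same approach as the paper: both arguments reduce to the identity $\left(\bigcup_{t \in T} \hat{t}\right) \cap \left(\bigcup_{u \in U} \hat{u}\right) = \bigcup_{t,u} \hat{t \wedge u}$ in $\Po(J^\infty(L^\delta))$ and then invoke Lemma~\ref{lem:gencriterium}. The only cosmetic difference is that the paper handles both inclusions simultaneously by rewriting $\gen{T}_{ai} \cap \gen{U}_{ai}$ directly as $\{b : \hat{b} \subseteq \bigcup_{t,u} \hat{t \wedge u}\}$, whereas you verify the easy inclusion separately via the downset property.
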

\begin{proof}
By Lemma~\ref{lem:gencriterium}, we have that 
\[ \gen{T}_{ai} \cap \gen{U}_{ai} 
= \{ b \in L \ | \ \hat{b} \subseteq \left(\bigcup_{t \in T} \hat{t}\right) \cap \left(\bigcup_{u \in U} \hat{u}\right)\}.\]
Note that 
\[ \left(\bigcup_{t \in T} \hat{t}\right) \cap \left(\bigcup_{u \in U} \hat{u}\right) = \bigcup_{t \in T, u \in U} (\hat{t} \cap \hat{u}) = \bigcup_{t \in T, u \in U} \hat{t \wedge u}.\]
So we get that $\gen{T}_{ai} \cap \gen{U}_{ai} = \{ b \in L \ | \ \hat{b} \subseteq \bigcup_{t \in T, u \in U} \hat{t \wedge u}\} = \gen{t \wedge u \ | \ t \in T, u \in U}_{ai}$, again by Lemma~\ref{lem:gencriterium}.
\end{proof}
We are now ready to prove that the lattice of finitely generated a-ideals is indeed a distributive envelope of $L$. Let us denote by $\eta^\wedge_L$ the embedding which sends $a \in L$ to the a-ideal generated by $a$, which is simply the downset of $a$ in $L$.
\begin{lem}\label{lem:emorphism}
Let $L$ be a lattice. Then $\eta^\wedge_L$ preserves finite meets and admissible joins.
\end{lem}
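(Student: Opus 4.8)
The plan is to unwind all three ingredients — the map $\eta^\wedge_L$, the meet, and the join in the lattice of finitely generated a-ideals — into concrete set-theoretic statements, and then verify each preservation property directly. First I would record the basic observation that $\eta^\wedge_L(a) = \gen{a}_{ai} = {\downarrow}a$, since the principal downset ${\downarrow}a$ is already a lattice ideal, hence an a-ideal, and is clearly the smallest a-ideal containing $a$.

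For finite meets, I would invoke Proposition~\ref{prop:genintersection}, which identifies the meet in the a-ideal lattice with intersection. Preservation of binary meets then reduces to the elementary order fact that ${\downarrow}(a \wedge b) = {\downarrow}a \cap {\downarrow}b$, valid in any lattice since $c \leq a \wedge b$ iff $c \leq a$ and $c \leq b$. The nullary meet is equally immediate: $\eta^\wedge_L(1) = {\downarrow}1 = L$, which is the top a-ideal.

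For admissible joins, let $M \subseteq L$ be finite and join-admissible. I would use the description (stated just before Proposition~\ref{prop:genintersection}) of the join in the a-ideal lattice as the a-ideal generated by the union of the generator sets, so that $\bigvee_{m \in M} \eta^\wedge_L(m) = \gen{M}_{ai}$, while $\eta^\wedge_L(\bigvee M) = {\downarrow}(\bigvee M) = \gen{\bigvee M}_{ai}$. The goal is thus to prove $\gen{M}_{ai} = \gen{\bigvee M}_{ai}$. One inclusion is purely order-theoretic: each $m \leq \bigvee M$, so $M \subseteq {\downarrow}(\bigvee M)$, whence $\gen{M}_{ai} \subseteq \gen{\bigvee M}_{ai}$. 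The reverse inclusion is exactly where the hypothesis enters: since $\gen{M}_{ai}$ is an a-ideal containing $M$ and $M$ is join-admissible, closure under admissible joins forces $\bigvee M \in \gen{M}_{ai}$, hence $\gen{\bigvee M}_{ai} = {\downarrow}(\bigvee M) \subseteq \gen{M}_{ai}$. Since the empty set is vacuously join-admissible, this argument also covers preservation of the bottom element.

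I do not expect a genuine obstacle here; the entire content lies in correctly translating the lattice operations on a-ideals and in isolating the one step — namely $\bigvee M \in \gen{M}_{ai}$ — where admissibility is actually used. That step is the crux: a \emph{non}-admissible join would not lie in the generated a-ideal, which is precisely the reason why only admissible joins can be preserved by such an embedding.
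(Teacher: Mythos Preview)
Your proof is correct. The treatment of finite meets matches the paper's (which simply says ``clearly''), but your argument for admissible joins is genuinely different from, and more elementary than, the one in the paper.

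The paper establishes $\gen{M}_{ai} = \gen{\bigvee M}_{ai}$ by passing through the point-set representation: it invokes Lemma~\ref{lem:gencriterium} to characterize membership in $\gen{M}_{ai}$ as $\hat{b} \subseteq \bigcup_{m \in M} \hat{m}$, and then Lemma~\ref{lem:joinadmcanext} to rewrite $\bigcup_{m \in M} \hat{m}$ as $\hat{\bigvee M}$ using join-admissibility. Your argument instead stays entirely at the level of the a-ideal definition: the crux $\bigvee M \in \gen{M}_{ai}$ follows directly from the defining closure property of a-ideals, with no detour through $J^\infty(L^\delta)$. This is shorter and makes the role of admissibility more transparent. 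The paper's route, on the other hand, is consistent with its overall strategy of leveraging the canonical extension, and the two lemmas it cites are needed elsewhere in the section anyway, so the extra machinery is not really a cost in context.
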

\begin{proof}
Clearly, $\eta^\wedge_L$ preserves finite meets. Let $M$ be a finite join-admissible set. Then $\bigvee_{m \in M} \eta^\wedge_L(m) = \gen{M}_{ai}$. By Lemma~\ref{lem:gencriterium}, we have $b \in \gen{M}_{ai}$ if, and only if, $\hat{b} \subseteq \bigcup_{m \in M} \hat{m}$, and $\bigcup_{m \in M} \hat{m} = \hat{\bigvee M}$ by Lemma~\ref{lem:joinadmcanext}, since $M$ is join-admissible. Therefore, $b \in \bigvee_{m \in M} \eta^\wedge_L(m) = \gen{M}_{ai}$ if, and only if, $\hat{b} \subseteq \hat{\bigvee M}$ if, and only if, $b \in \gen{\bigvee M}_{ai} = \eta^\wedge_L(\bigvee M)$.
\end{proof}

\begin{thm}\label{thm:envelopeexists}
Let $L$ be a lattice. The embedding $\eta^\wedge_L$ of $L$ into the finitely generated a-ideals of $L$ is a distributive $\wedge$-envelope of $L$.
\end{thm}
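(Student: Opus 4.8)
The plan is to verify the universal property, since the other ingredients are already in hand: by Proposition~\ref{prop:isodwedge} the finitely generated a-ideals form a distributive lattice, by Lemma~\ref{lem:emorphism} the map $\eta^\wedge_L$ preserves finite meets and admissible joins, and $\eta^\wedge_L$ is injective because $\eta^\wedge_L(a) = {\downarrow}a$ determines $a$ as its greatest element. The key structural observation driving the rest is that every finitely generated a-ideal $\gen{T}_{ai}$ is the finite join $\bigvee_{a \in T} \eta^\wedge_L(a)$ in the lattice of a-ideals, the join being the a-ideal generated by the union of generators. Consequently, if a join-preserving homomorphism $\hat{f}$ with $\hat{f} \circ \eta^\wedge_L = f$ exists at all, it is forced to satisfy $\hat{f}(\gen{T}_{ai}) = \bigvee_{a \in T} f(a)$; this yields uniqueness immediately.

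For existence I take this formula as the definition of $\hat{f}$ and first check that it is well defined, i.e.\ independent of the chosen finite generating set. This is where the real content lies. Suppose $\gen{T}_{ai} = \gen{U}_{ai}$; by symmetry it suffices to show $\bigvee_{b \in U} f(b) \le \bigvee_{a \in T} f(a)$. Fix $b \in U$. Then $b \in \gen{T}_{ai}$, so by Lemma~\ref{lem:gencriterium} there is a finite join-admissible $M \subseteq {\downarrow}T$ with $b = \bigvee M$. Since $f$ preserves admissible joins, $f(b) = \bigvee_{m \in M} f(m)$; and since each $m \in M$ lies below some $a \in T$ while $f$ is monotone (it preserves meets), we get $f(m) \le f(a) \le \bigvee_{a \in T} f(a)$. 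Taking the join first over $m \in M$ and then over $b \in U$ gives the desired inequality. I expect this well-definedness step to be the main obstacle, as it is precisely the point at which admissibility and the characterisation in Lemma~\ref{lem:gencriterium} are indispensable.

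It then remains to check that $\hat{f}$ is a homomorphism of bounded lattices and factors $f$. Join-preservation is immediate from $\gen{T}_{ai} \vee \gen{U}_{ai} = \gen{T \cup U}_{ai}$. For meet-preservation I invoke Proposition~\ref{prop:genintersection}:
\[ \hat{f}(\gen{T}_{ai} \cap \gen{U}_{ai}) = \bigvee_{t \in T,\, u \in U} f(t \wedge u) = \bigvee_{t \in T,\, u \in U} \bigl(f(t) \wedge f(u)\bigr) = \Bigl(\bigvee_{t \in T} f(t)\Bigr) \wedge \Bigl(\bigvee_{u \in U} f(u)\Bigr), \]
where the second equality uses meet-preservation of $f$ and the third uses the distributivity of $D$. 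For the bounds, the least finitely generated a-ideal is $\gen{\emptyset}_{ai}$, on which the formula returns the empty join $0_D$, while the greatest is $\gen{1}_{ai} = L$, on which it returns $f(1) = 1_D$; here one uses that preservation of the empty meet and the empty (admissible) join forces $f$ to preserve top and bottom. Finally $\hat{f}(\eta^\wedge_L(a)) = \hat{f}(\gen{a}_{ai}) = f(a)$, so $\hat{f} \circ \eta^\wedge_L = f$, which completes the verification of the universal property and hence the proof.
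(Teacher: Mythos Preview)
Your proof is correct and follows essentially the same approach as the paper: uniqueness from the fact that $\eta^\wedge_L[L]$ join-generates $D^\wedge(L)$, well-definedness via Lemma~\ref{lem:gencriterium}, join-preservation from $\gen{T}_{ai} \vee \gen{U}_{ai} = \gen{T \cup U}_{ai}$, and meet-preservation via Proposition~\ref{prop:genintersection} together with distributivity of $D$. The only cosmetic difference is that you spell out the treatment of the bounds a little more explicitly than the paper does.
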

\begin{proof}
Let us write $D^\wedge(L)$ for the distributive lattice of finitely generated a-ideals of $L$. Lemma~\ref{lem:emorphism} shows that $\eta^\wedge_L$ preserves finite meets and admissible joins. It remains to show that it satisfies the universal property. Let $f : L \to D$ be a function which preserves meets and admissible joins. If $g : D^\wedge(L) \to D$ is a homomorphism such that $g \circ \eta^\wedge_L = f$, then, for any finite subset $T \subseteq L$, we have 
\[ g(\gen{T}_{ai}) = g\left(\bigvee_{t \in T} \eta^\wedge_L(t)\right) = \bigvee_{t \in T} g(\eta^\wedge_L(t)) = \bigvee_{t \in T} f(t).\]
So there is at most one homomorphism $g : D^\wedge(L) \to D$ satisfying $g \circ \eta^\wedge_L = f$. Let $\hat{f} : D^\wedge(L) \to D$ be the function defined for a finite subset $T \subseteq L$ by
\[ \hat{f}(\gen{T}_{ai}) := \bigvee_{t \in T} f(t).\]
We show that $\hat{f}$ is a well-defined homomorphism. For well-definedness, suppose that $\gen{T}_{ai} = \gen{U}_{ai}$ for some finite subsets $T, U \subseteq L$. Let $u \in U$ be arbitrary. We then have $u \in \gen{T}_{ai}$. By Lemma~\ref{lem:gencriterium}, $u = \bigvee M$ for some finite join-admissible $M \subseteq {\downarrow}T$. Using that $f$ preserves admissible joins and order, we get
\[ f(u) = f\left(\bigvee M\right) = \bigvee_{m \in M} f(m) \leq \bigvee_{t \in T} f(t).\]
Since $u \in U$ was arbitrary, we have shown that $\bigvee_{u \in U} f(u) \leq \bigvee_{t \in T} f(t)$. The proof of the other inequality is the same. We conclude that $\bigvee_{t \in T} f(t) = \bigvee_{u \in U} f(u)$, so $\hat{f}$ is well-defined.

It is clear that $\hat{f} \circ \eta^\wedge_L = f$. In particular, $\hat{f}$ preserves $0$ and $1$, since $f$ does. It remains to show that $\hat{f}$ preserves $\vee$ and $\wedge$. Let $T, U \subseteq L$ be finite subsets. Then $\gen{T}_{ai} \vee \gen{U}_{ai} = \gen{T \cup U}_{ai}$, so
\[\hat{f}(\gen{T}_{ai} \vee \gen{U}_{ai}) = \bigvee_{v \in T \cup U} f(v) = \bigvee_{t \in T} f(t) \vee \bigvee_{u \in U} f(u) = \hat{f}(\gen{T}_{ai}) \vee \hat{f}(\gen{U}_{ai}).\]
Using Proposition~\ref{prop:genintersection} and the assumptions that $D$ is distributive and $f$ is meet-preserving, we have
\begin{align*}
\hat{f}(\gen{T}_{ai} \wedge \gen{U}_{ai}) = \bigvee_{t \in T, u \in U} f(t \wedge u) &= \bigvee_{t \in T, u \in U} (f(t) \wedge f(u))  \\
&= \bigvee_{t \in T} f(t) \wedge \bigvee_{u \in U} f(u)  = \hat{f}(\gen{T}_{ai}) \wedge \hat{f}(\gen{U}_{ai}).\qedhere
\end{align*}
\end{proof}
We now investigate the categorical properties of the distributive $\wedge$-envelope construction a bit further. In particular, we will deduce that the assignment $L \mapsto D^\wedge(L)$ extends to an adjunction between categories. We first define the appropriate categories. We denote by $\mathbf{DL}$ the category of distributive lattices with homomorphisms. The relevant category of lattices is defined as follows.

\begin{dfn}
We say that a function $f : L_1 \to L_2$ between lattices is a ($\wedge$,a$\vee$){\it-morphism} if $f$ preserves finite meets and admissible joins, and, for any join-admissible set $M \subseteq L_1$, $f[M]$ is join-admissible. 
We denote by $\Lwav$ the category of lattices with ($\wedge$,a$\vee$)-morphisms between them. (The reader may verify that $\Lwav$ is indeed a category.) 
\end{dfn}

Note that if the lattice $L_2$ is distributive, then the condition that $f$ sends join-admissible sets to join-admissible sets is vacuously true, since any subset of a distributive lattice is join-admissible. This explains why we did not need to state the condition that $f$ preserves join-admissible sets in the universal property of $D^\wedge(L)$. However, the following example shows that in general the condition `$f$ sends join-admissible sets to join-admissible sets' can not be omitted from the definition of ($\wedge$,a$\vee$)-morphism.
\begin{exa}\label{exa:badmorphism}
{\it The composition $g \circ f$ of functions $f: L_1 \to L_2$ and $g: L_2~\to~L_3$ between lattices which preserve meets and admissible joins need not preserve admissible joins.}

Let $L_1$ be the diamond distributive lattice, let $L_2$ be the three-element antichain with $0$ and $1$ adjoined, and let $L_3$ be the Boolean algebra with $3$ atoms, as in Figure~\ref{fig:badmorphismlattices} below. Note that $L_3 = D^\wedge(L_2)$.

\vspace{-5mm}

\begin{figure}[htp]
\begin{center}
\begin{tikzpicture}[scale=0.7]
\node at (0,-1) {$L_1$};
\po{0,0}
\po{-1,1}
\po{1,1}
\po{0,2}
\li{(0,0)--(-1,1)--(0,2)--(1,1)--(0,0)}
\node at (0,0) [below] {$0$};
\node at (0,2) [above] {$1$};
\node at (-1,1) [left] {$a_1$};
\node at (1,1) [right] {$b_1$};

\node at (5,-1) {$L_2$};
\po{5,0}
\po{4,1}
\po{5,1}
\po{6,1}
\po{5,2}
\li{(5,0)--(4,1)--(5,2)--(6,1)--(5,0)}
\li{(5,2)--(5,1)--(5,0)}

\node at (5,0) [below] {$0$};
\node at (5,2) [above] {$1$};
\node at (4,1) [left] {$a_2$};
\node at (6,1) [right] {$b_2$};
\node at (5,1) [right] {$c_2$};

\node at (10,-1) {$L_3$};
\po{10,0}
\po{9,1}
\po{9,2}
\po{10,3}
\po{11,2}
\po{11,1}
\po{10,1}
\po{10,2}
\li{(10,1)--(10,0)--(9,1)--(9,2)--(10,3)--(11,2)--(11,1)--(10,0)}
\li{(10,3)--(10,2)}
\li{(9,2)--(10,1)--(11,2)}
\li{(9,1)--(10,2)--(11,1)}
\node at (10,0) [below] {$0$};
\node at (10,3) [above] {$1$};
\node at (9,1) [left] {$a_3$};
\node at (11,1) [right] {$b_3$};
\node at (10,1) [right] {$c_3$};

\end{tikzpicture} 
\end{center}

\vspace{-5mm}

\caption{The lattices $L_1$, $L_2$ and $L_3$ from Example~\ref{exa:badmorphism}.\label{fig:badmorphismlattices}}
\end{figure}
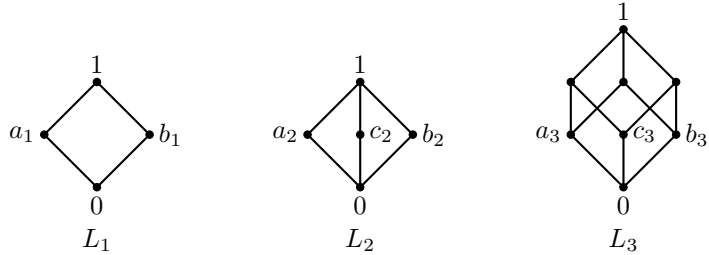

\vspace{-1em}

Let $f : L_1 \to L_2$ be the homomorphism defined by $f(x_1) = x_2$ for $x \in \{a,b\}$. Let $g : L_2 \to L_3$ be the function $\eta^\wedge_{L_2}$, i.e., $g$ is the $(\wedge,a\vee)$-morphism sending $x_2$ to $x_3$ for $x \in \{a,b,c\}$.  
The composition $gf$ does not preserve (admissible) joins: $gf(a_1 \vee b_1) = gf(1) = 1$, but $gf(a_1) \vee gf(b_1) = a_3 \vee b_3 \neq 1$.
Note that $f$, despite it being a homomorphism, does not send join-admissible sets to join-admissible sets: the image of $\{a_1,b_1\}$ is $\{a_2,b_2\}$, which is not join-admissible in $L_2$. 
\end{exa}
However, the following proposition shows that for {\it surjective} maps, the condition `$f$ sends join-admissible sets to join-admissible sets' can be omitted. It was already observed by Urquhart \cite{Urq1978} that surjective maps are well-behaved for duality, and accordingly our duality in Section~\ref{sec:duality} will also include all surjective lattice homomorphisms.
\begin{prop}\label{prop:surjadm}
Suppose $f : L_1 \to L_2$ is a surjective function which preserves finite meets and admissible joins. Then $f$ sends join-admissible sets to join-admissible sets {\rm (}and therefore $f$ is a morphism in $\mathbf{L_{\wedge,a\vee}}${\rm )}.
\end{prop}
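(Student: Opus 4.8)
The plan is to verify the defining equation of join-admissibility for $f[M]$ directly, exploiting surjectivity to reduce an arbitrary test element of $L_2$ to one pulled back along $f$. Fix a join-admissible set $M \subseteq L_1$; I must show that for every $c \in L_2$ we have $c \wedge \bigvee f[M] = \bigvee_{m \in M}(c \wedge f(m))$. Since $f$ is surjective, write $c = f(a)$ for some $a \in L_1$. Because $M$ is admissible and $f$ preserves admissible joins, $\bigvee f[M] = f(\bigvee M)$; combining this with meet-preservation gives $c \wedge \bigvee f[M] = f(a) \wedge f(\bigvee M) = f(a \wedge \bigvee M)$, and admissibility of $M$ in $L_1$ rewrites this as $f\!\left(\bigvee_{m \in M}(a \wedge m)\right)$.

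The crux is then to push $f$ inside this last join, i.e.\ to know that $f$ preserves it. For this I would first establish the key auxiliary fact: \emph{if $M$ is join-admissible in $L_1$, then so is the set $\{a \wedge m : m \in M\}$ for any $a \in L_1$}. This follows by a short direct calculation: for any $b \in L_1$,
\begin{align*}
b \wedge \bigvee_{m \in M}(a \wedge m) &= b \wedge \left(a \wedge \bigvee M\right) = (b \wedge a) \wedge \bigvee M \\
&= \bigvee_{m \in M}\big((b \wedge a) \wedge m\big) = \bigvee_{m \in M}\big(b \wedge (a \wedge m)\big),
\end{align*}
where the first and third equalities use the admissibility of $M$, applied to the test elements $a$ and $b \wedge a$ respectively. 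Alternatively, this fact can be read off from Lemma~\ref{lem:joinadmcanext}: any $x \in J^\infty(L^\delta)$ below $\bigvee_{m \in M}(a \wedge m) = a \wedge \bigvee M$ lies below some $m \in M$ as well as below $a$, hence below $a \wedge m$.

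With the auxiliary fact in hand, $\{a \wedge m : m \in M\}$ is admissible, so $f$ preserves its join, and the computation concludes
\[
f\!\left(\bigvee_{m \in M}(a \wedge m)\right) = \bigvee_{m \in M} f(a \wedge m) = \bigvee_{m \in M}\big(f(a) \wedge f(m)\big) = \bigvee_{m \in M}\big(c \wedge f(m)\big),
\]
again using meet-preservation. This is exactly the required identity, so $f[M]$ is join-admissible. I expect the main obstacle to be precisely this auxiliary closure fact; everything else is bookkeeping with meet- and join-preservation. The role of surjectivity is simply to guarantee that every $c \in L_2$ arises as some $f(a)$, so that the pulled-back computation applies to all test elements — without it the argument controls only test elements in the image of $f$, which is consistent with the failure exhibited in Example~\ref{exa:badmorphism}.
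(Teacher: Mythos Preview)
Your proof is correct and follows essentially the same route as the paper's: both reduce via surjectivity to a test element $c=f(a)$, use the auxiliary fact that $\{a\wedge m : m\in M\}$ is again join-admissible, and then chain meet-preservation, admissible-join-preservation, and admissibility of $M$ to obtain the identity. The only difference is that you spell out a proof of the auxiliary closure fact, which the paper simply asserts.
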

\begin{proof}
Suppose $M \subseteq L_1$ is a join-admissible set. To show that $f[M]$ is join-admissible, first let $a \in L_1$ be arbitrary. Note that it follows from the definition of join-admissibility that $\{a \wedge m \ | \ m \in M\}$ is also join-admissible in $L_1$. So, using that $f$ preserves meets and admissible joins, we get
\[ f(a) \wedge \bigvee_{m \in M} f(m) = f(a \wedge \bigvee M) = f\left(\bigvee_{m \in M} (a \wedge m)\right) = \bigvee_{m \in M} (f(a) \wedge f(m)).\]
Since $f$ is surjective, any $b \in L_2$ is of the form $b = f(a)$ for some $a \in L_1$. Hence, $f[M]$ is join-admissible.
\end{proof}

Note that, if $L_1$ and $L_2$ are distributive, then ($\wedge$,a$\vee$)-morphisms from $L_1$ to $L_2$ are exactly bounded lattice homomorphisms. Hence, we have a full inclusion of categories $I^\wedge : \mathbf{DL} \embeds \Lwav$. The following is now a consequence of the universal property.
\begin{cor}\label{cor:adjunction}
The functor $D^\wedge : \Lwav \to \mathbf{DL}$, which sends $L$ to $D^\wedge(L)$ and a ($\wedge$,a$\vee$)-morphism $f : L_1 \to L_2$ to the unique homomorphic extension of the function $\eta^\wedge_{L_2} \circ f : L_1 \to D^\wedge(L_2)$, is left adjoint to $I^\wedge : \mathbf{DL} \embeds \Lwav$ and $\eta^\wedge$ is the unit of the adjunction. Moreover, the counit $\epsilon^\wedge : D^\wedge \circ I \to 1_{\mathbf{DL}}$ is an isomorphism. 
\end{cor}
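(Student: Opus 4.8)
The plan is to reduce every assertion to the universal property of Theorem~\ref{thm:envelopeexists}, treating in turn the well-definedness and functoriality of $D^\wedge$, the adjunction together with the identification of $\eta^\wedge$ as its unit, and finally the invertibility of the counit.

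First I would check that $D^\wedge$ is well-defined on morphisms. Given a $(\wedge,a\vee)$-morphism $f : L_1 \to L_2$, I need the composite $\eta^\wedge_{L_2} \circ f : L_1 \to D^\wedge(L_2)$ to preserve finite meets and admissible joins, so that Theorem~\ref{thm:envelopeexists} produces a unique homomorphic extension $D^\wedge(f)$. Preservation of finite meets is immediate, since $f$ and $\eta^\wedge_{L_2}$ both preserve them. For admissible joins, given a join-admissible $M \subseteq L_1$ I would use that $f$ preserves $\bigvee M$ and, crucially, that $f[M]$ is again join-admissible in $L_2$ (exactly the extra clause in the definition of $(\wedge,a\vee)$-morphism), so that $\eta^\wedge_{L_2}$ may be distributed over $\bigvee f[M]$ by Lemma~\ref{lem:emorphism}. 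This is the one genuinely delicate step, and it is precisely the phenomenon of Example~\ref{exa:badmorphism}: without the hypothesis that $f$ sends join-admissible sets to join-admissible sets, the composite need not preserve admissible joins and $D^\wedge(f)$ would fail to exist. Functoriality then follows purely formally from the uniqueness clause: both $D^\wedge(\id_L)$ and $\id_{D^\wedge(L)}$ restrict along $\eta^\wedge_L$ to $\eta^\wedge_L$ and so coincide, while $D^\wedge(g)\circ D^\wedge(f)$ and $D^\wedge(g\circ f)$ both restrict along $\eta^\wedge_{L_1}$ to $\eta^\wedge_{L_3}\circ(g\circ f)$, using the defining identities $D^\wedge(f)\circ\eta^\wedge_{L_1}=\eta^\wedge_{L_2}\circ f$ and $D^\wedge(g)\circ\eta^\wedge_{L_2}=\eta^\wedge_{L_3}\circ g$, and hence are equal.

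For the adjunction, I would note that since $I^\wedge$ is a full inclusion, establishing that $\eta^\wedge$ is the unit amounts to showing that each $\eta^\wedge_L : L \to I^\wedge(D^\wedge(L))$ is universal among $(\wedge,a\vee)$-morphisms from $L$ into distributive lattices. But a $(\wedge,a\vee)$-morphism $f : L \to D$ with $D$ distributive is exactly a function preserving finite meets and admissible joins, since every subset of a distributive lattice is join-admissible; hence this universality is verbatim the universal property of Theorem~\ref{thm:envelopeexists}. This simultaneously yields the adjunction $D^\wedge \dashv I^\wedge$ and exhibits $\eta^\wedge$ as its unit.

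Finally, for the counit I would show directly that $\eta^\wedge_D$ is an isomorphism whenever $D$ is distributive. In that case a-ideals coincide with ordinary lattice ideals, so every finitely generated a-ideal $\gen{T}_{ai}$ of $D$ equals the principal ideal ${\downarrow}(\bigvee T) = \eta^\wedge_D(\bigvee T)$; thus the embedding $\eta^\wedge_D$ is surjective and hence an isomorphism. Since the counit $\epsilon^\wedge_D : D^\wedge(I^\wedge(D)) \to D$ is the unique homomorphism satisfying $\epsilon^\wedge_D \circ \eta^\wedge_D = \id_D$, it is the inverse of $\eta^\wedge_D$ and therefore an isomorphism. (Equivalently, one may invoke the general fact that the counit of a reflection onto a full subcategory is invertible.) The only step requiring real care is the well-definedness of $D^\wedge$ on morphisms above; the remainder is formal manipulation of the universal property.
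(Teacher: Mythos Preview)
Your proposal is correct and matches the paper's approach exactly. The paper states this result as a corollary with no explicit proof, treating it as a formal consequence of the universal property in Theorem~\ref{thm:envelopeexists}; your argument supplies precisely the standard verification one would expect (well-definedness via the extra clause on join-admissible images, functoriality by uniqueness, the adjunction as a restatement of the universal property, and the counit handled by observing that a-ideals in a distributive lattice are principal).
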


As a last general consideration about the distributive $\wedge$-envelope, we give a characterization which is closer in spirit to the one given for the infinitary version by Bruns and Lakser \cite[Corollary 2]{Bru1970}. To do so, it will be useful to know that the extension of an injective map to $D^\wedge(L)$ is injective.
\begin{prop}\label{prop:liftinjective}
Let $L$ be a lattice, $D$ a distributive lattice, and $f : L \to D$ a function which preserves finite meets and admissible joins. If $f$ is injective, then the unique extension $\hat{f} : D^\wedge(L) \to D$ is injective.
\end{prop}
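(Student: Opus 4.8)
The plan is to exploit that $\hat{f}$ is a lattice homomorphism (by Theorem~\ref{thm:envelopeexists}), so that it is injective as soon as it is order-reflecting: if $\hat{f}(A) \leq \hat{f}(B)$ implies $A \leq B$, then $\hat{f}(A) = \hat{f}(B)$ forces $A = B$ by antisymmetry. Since every element of $D^\wedge(L)$ is of the form $\gen{T}_{ai}$, the order is inclusion, and $\gen{T}_{ai} \subseteq \gen{U}_{ai}$ holds precisely when $t \in \gen{U}_{ai}$ for every $t \in T$, the whole statement reduces to a single-element claim: for $b \in L$ and a finite subset $U \subseteq L$, if $f(b) \leq \bigvee_{u \in U} f(u)$ in $D$, then $b \in \gen{U}_{ai}$. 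Indeed, using the formula $\hat{f}(\gen{T}_{ai}) = \bigvee_{t \in T} f(t)$, the inequality $\hat{f}(\gen{T}_{ai}) \leq \hat{f}(\gen{U}_{ai})$ unwinds to $\bigvee_{t} f(t) \leq \bigvee_{u} f(u)$, whence $f(t) \leq \bigvee_{u} f(u)$ for each $t \in T$, and the claim applies to each such $t$.

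To prove the claim I would set $M := \{b \wedge u \mid u \in U\}$ and verify the hypotheses of condition (iii) of Lemma~\ref{lem:gencriterium}, namely that $M$ is a finite join-admissible subset of ${\downarrow}U$ with $b = \bigvee M$; that lemma then yields $b \in \gen{U}_{ai}$. The inclusion $M \subseteq {\downarrow}U$ is immediate. The key mechanism for everything else is that injectivity of $f$ transports joins computed in $D$ back to $L$: whenever $S \subseteq L$ satisfies $\bigvee S \leq c$ and $f(c) = \bigvee_{s \in S} f(s)$, then, $f$ being order-preserving, $f(\bigvee S) \geq \bigvee_{s} f(s) = f(c) \geq f(\bigvee S)$, so $f(\bigvee S) = f(c)$ and hence $\bigvee S = c$ by injectivity.

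The main computation, which I expect to be the crux, is showing that $M$ is join-admissible, i.e.\ $a \wedge \bigvee M = \bigvee_{m \in M}(a \wedge m)$ for every $a \in L$. Fixing $a$ and putting $N_a := \{a \wedge b \wedge u \mid u \in U\}$, so $\bigvee N_a \leq a \wedge b$, I would compute in $D$, using meet-preservation of $f$, distributivity of $D$, and the hypothesis $f(b) \leq \bigvee_{u} f(u)$, that $f(a \wedge b) = f(a) \wedge f(b) = f(a) \wedge f(b) \wedge \bigvee_{u} f(u) = \bigvee_{u} f(a \wedge b \wedge u)$, i.e.\ $f(a \wedge b) = \bigvee_{n \in N_a} f(n)$. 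The transport principle above (with $c = a \wedge b$ and $S = N_a$) then gives $a \wedge b = \bigvee_{u}(a \wedge b \wedge u) = \bigvee_{m \in M}(a \wedge m)$. Taking $a = 1$ yields $b = \bigvee M$, and feeding this back turns the identity into $a \wedge \bigvee M = \bigvee_{m \in M}(a \wedge m)$, which is exactly join-admissibility of $M$. With $b = \bigvee M$, $M$ join-admissible, and $M \subseteq {\downarrow}U$ established, Lemma~\ref{lem:gencriterium} delivers $b \in \gen{U}_{ai}$, completing the argument. The only genuine subtlety is that both distributivity of the codomain $D$ and injectivity of $f$ are used essentially in this step — distributivity to split $f(a \wedge b)$ across the join, and injectivity to pull the resulting equality back into $L$ — so the proof really does engage all the hypotheses.
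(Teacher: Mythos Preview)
Your proof is correct and follows essentially the same route as the paper's. Both reduce to showing that if $f(b)\leq\bigvee_{u\in U} f(u)$ then $b\in\gen{U}_{ai}$, and both do this by proving that $\{b\wedge u\mid u\in U\}$ is join-admissible with join $b$ via the computation $f(a\wedge b)=\bigvee_u f(a\wedge b\wedge u)$ in $D$; the only cosmetic difference is that the paper phrases the passage back to $L$ as ``$f$ is order-reflecting'' (from injective plus meet-preserving) rather than your equivalent ``transport principle''.
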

\begin{proof}
Note that $f$ is order reflecting, since $f$ is meet-preserving and injective.
Suppose that $\hat{f}(\gen{U}_{ai}) \leq \hat{f}(\gen{T}_{ai})$. We need to show that $\gen{U}_{ai} \subseteq \gen{T}_{ai}$. Let $u \in U$ be arbitrary. Then $f(u) \leq \hat{f}(\gen{U}_{ai}) \leq \hat{f}(\gen{T}_{ai}) = \bigvee_{t \in T} f(t)$. For any $a \in L$, we then have
\begin{align*} 
f(a \wedge u) = f(a \wedge u) \wedge \bigvee_{t \in T} f(t) &= \bigvee_{t \in T} (f(a \wedge u) \wedge f(t)) \\
&= \bigvee_{t \in T} f(a \wedge u \wedge t) \leq f\left(\bigvee_{t \in T} (a \wedge u \wedge t)\right).
\end{align*}
Since $f$ is order reflecting, we thus get $a \wedge u \leq \bigvee_{t \in T} (a \wedge u \wedge t)$. Since the other inequality is clear, we get 
\[
a \wedge u = \bigvee_{t \in T} (a \wedge u \wedge t).
\]
In particular, putting $a = 1$, we see that $u = \bigvee_{t \in T} (u \wedge t)$, and the above equation then says that $\{u \wedge t \ | \ t \in T\}$ is join-admissible. So $u \in \gen{T}_{ai}$. We conclude that $U \subseteq \gen{T}_{ai}$, and therefore $\gen{U}_{ai} \subseteq \gen{T}_{ai}$.
\end{proof}

The following characterisation of $D^\wedge(L)$ now follows easily.
\begin{cor}\label{cor:univchar} 
Let $L$ be a lattice. If $D$ is a distributive lattice and $f : L \to D$ is a function such that
\begin{enumerate}
\item $f$ preserves meets and admissible joins, 
\item $f$ is injective,
\item $f[L]$ is join-dense in $D$, 
\end{enumerate}
then $D$ is isomorphic to $D^\wedge(L)$ via the isomorphism $\hat{f}$.
\end{cor}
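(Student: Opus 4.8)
The plan is to verify that the homomorphism $\hat{f}$ supplied by the universal property is a bijection, hence a lattice isomorphism. First I would invoke Definition~\ref{def:distenv} together with Theorem~\ref{thm:envelopeexists}: since $D$ is distributive and, by hypothesis (i), $f$ preserves finite meets and admissible joins, there is a unique homomorphism $\hat{f} : D^\wedge(L) \to D$ with $\hat{f} \circ \eta^\wedge_L = f$, given explicitly on a finitely generated a-ideal by $\hat{f}(\gen{T}_{ai}) = \bigvee_{t \in T} f(t)$ (this formula is read off from the proof of Theorem~\ref{thm:envelopeexists}). It then remains only to show that this $\hat{f}$ is injective and surjective, as a bijective lattice homomorphism is automatically an order isomorphism.

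Injectivity is immediate: hypothesis (ii) says $f$ is injective, so Proposition~\ref{prop:liftinjective} applies verbatim and yields that $\hat{f}$ is injective.

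For surjectivity I would first pin down the image of $\hat{f}$. From the explicit formula, $\im \hat{f} = \{\bigvee_{t \in T} f(t) : T \subseteq L \text{ finite}\}$, the set of all finite joins of elements of $f[L]$. Using hypothesis (i) that $f$ preserves finite meets together with the distributivity of $D$ --- exactly as in the $\wedge$-computation in the proof of Theorem~\ref{thm:envelopeexists} and in Proposition~\ref{prop:genintersection}, where $(\bigvee_{t \in T} f(t)) \wedge (\bigvee_{u \in U} f(u)) = \bigvee_{t \in T, u \in U} f(t \wedge u)$ --- this set is also closed under finite meets, so it is precisely the sublattice of $D$ generated by $f[L]$. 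Hypothesis (iii), the join-density of $f[L]$, then says that every element of $D$ is such a finite join, i.e.\ lies in $\im \hat{f}$, so $\hat{f}$ is onto.

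The main obstacle is the surjectivity step, and specifically the handling of join-density: one must ensure that join-density delivers each element of $D$ as a \emph{finite} join of elements of $f[L]$. This is where the finitary character of the construction is essential --- the sublattice generated by $f[L]$, which by meet-preservation and distributivity consists exactly of finite joins of $f[L]$, must already exhaust $D$, and it is this finitary reading of condition (iii) that makes the image coincide with all of $D$. Once the image is identified with this sublattice and shown to be all of $D$, the conclusion is formal: $\hat{f} : D^\wedge(L) \to D$ is a bijective homomorphism and hence an isomorphism, witnessing $D \cong D^\wedge(L)$.
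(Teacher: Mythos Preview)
Your proof is correct and follows essentially the same route as the paper: injectivity via Proposition~\ref{prop:liftinjective}, and surjectivity by identifying $\im\hat{f}$ with the finite joins of $f[L]$ and invoking join-density. Your additional remark that this image is closed under meets (hence equals the sublattice generated by $f[L]$) is correct but not needed, and your caveat about the finitary reading of ``join-dense'' is apt---in the context of bounded (not complete) lattices this is the intended meaning.
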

\begin{proof}
The homomorphism $\hat{f}$ is injective by Proposition~\ref{prop:liftinjective}. It is surjective because $f[L]$ is join-dense in $D$ and $\hat{f}[D^\wedge(L)] = \{\bigvee f[T] \ | \ T \subseteq L\}$, by the construction of $\hat{f}$ in the proof of Theorem~\ref{thm:envelopeexists}.
\end{proof}

\begin{rem}\label{rem:comparebrunslakser}We compare our results in this section so far to those of Bruns and Lakser \cite{Bru1970}. The equivalence of (i) and (ii) in Lemma~\ref{lem:gencriterium} is very similar to the statement of Lemma 3 in \cite{Bru1970}. Our proofs are different from those in \cite{Bru1970} in making use of the canonical extension of $L$; in particular Lemma~\ref{lem:joinadmcanext} has proven to be useful here. Our Corollary~\ref{cor:univchar} is a finitary version of the characterisation in Corollary 2 of \cite{Bru1970}. The fact that $D^\wedge$ is an adjoint to a full inclusion can also be seen as a finitary analogue of the result of \cite{Bru1970} that their construction provides the injective hull of a meet-semilattice. Note that our construction of $D^\wedge(L)$ could also be applied to the situation where $L$ is only a meet-semilattice, if we modify our definition of join-admissible sets to require that the relevant joins exist in $L$. The injective hull of $L$ that was constructed in \cite{Bru1970} can now be retrieved from our construction by taking the free directedly complete poset (dcpo) over the distributive lattice $D^\wedge(L)$. 
This is a special case of a general phenomenon, where frame constructions may be seen as a combination of a finitary construction, followed by a dcpo construction \cite{Jung2008}.
\end{rem}

\begin{rem}\label{rem:orderdual}
We outline the order-dual version of the construction given above for later reference. A finite subset $M \subseteq L$ is {\it meet-admissible} if for all $a \in M$, $a \vee \bigwedge M = \bigwedge_{m \in M} (a \vee m)$. The universal property of the distributive $\vee$-envelope is defined as in Definition~\ref{def:distenv}, interchanging the words `join' and `meet' everywhere in the definition. An {\it a-filter} is an upset which is closed under admissible meets. The distributive $\vee$-envelope can be realized as the poset of finitely generated a-filters of $L$, ordered by reverse inclusion. The distributive $\vee$-envelope is also anti-isomorphic to the sublattice of $\Po(M^\infty(L^\delta))$ that is generated by the sets $$\check{a} := \{y \in M^\infty(L^\delta) \ | \ a \leq y\},$$ by sending the a-filter generated by a finite set $T$ to $\bigcup_{a \in T} \check{a}$. Note that the order on a-filters has to be taken to be the reverse inclusion order, to ensure that the unit embedding $\eta^\vee_L$ of the adjunction will be order-preserving. On the other hand, the order in $\Po(M^\infty(L^\delta))$ is the inclusion order, which explains why $D^\vee(L)$ is {\it anti}-isomorphic to a sublattice of $\Po(M^\infty(L^\delta))$. We say that $f : L_1 \to L_2$ is a ($\vee$,a$\wedge$){\it-morphism} if it preserves finite joins, admissible meets, and sends meet-admissible sets to meet-admissible sets. Then $D^\vee$ is a functor from the category $\Lvaw$ to $\mathbf{DL}$ which is left adjoint to the full inclusion $I^\vee : \mathbf{DL} \to \Lvaw$. We denote the unit of the adjunction by $\eta^\vee : 1_{\Lvaw} \to I^\vee \circ D^\vee$. Finally, $D^\vee(L)$ is the (up to isomorphism) unique distributive meet-dense extension of $L$ which preserves finite joins and admissible meets.
\end{rem}

We end this section by examining additional structure which links the two distributive envelopes $D^\wedge(L)$ and $D^\vee(L)$, and enables us to retrieve $L$ from the lattices $D^\wedge(L)$ and $D^\vee(L)$. Recall from \cite[Section V.7]{Bir1967} that a tuple $(X,Y,R)$, where $X$ and $Y$ are sets and $R \subseteq X \times Y$ is a relation, is called a {\it polarity} and naturally induces a Galois connection\footnote{
As in \cite{Bir1967}, we use the term {\it Galois connection} for what is sometimes called a {\it contravariant adjunction}, i.e., a pair of order-preserving functions $u : P \leftrightarrows Q : l$ between posets satisfying $\id \leq ul$ and $\id \leq lu$. We reserve the word {\it adjunction} for covariant adjunctions. Both for a Galois connection and for an adjunction, we use the adjective {\it Galois-closed} for the elements $p \in P$ such that $lu(p) = p$, and for the $q \in Q$ such that $ul(q) = q$.}
 $u : \Po(X) \leftrightarrows \Po(Y) : l$.
Let $u_L : \Po(J^\infty(L^\delta)) \leftrightarrows \Po(M^\infty(L^\delta)) : l_L$ be the Galois connection associated to the polarity $(J^\infty(L^\delta),M^\infty(L^\delta),\leq_{L^\delta})$, that is, 
$$u_L(V) := \{y \in M^\infty(L^\delta) \ | \ \forall x \in V : x \leq_{L^\delta} y\} \quad \quad (V \subseteq J^\infty(L^\delta)),$$
$$l_L(W) := \{x \in J^\infty(L^\delta) \ | \ \forall y \in W : x \leq_{L^\delta} y\} \quad \quad (W \subseteq M^\infty(L^\delta)).$$ 
Note that if $V = \hat{a}$ for some $a \in L$, then $u_L(V) = u_L(\hat{a}) = \check{a}$. Recall from Proposition~\ref{prop:isodwedge} that the distributive lattice $D^\wedge(L)$ can be regarded as a sublattice of $\Po(J^\infty(L^\delta))$, and, by Remark~\ref{rem:orderdual}, $D^\vee(L)^\op$ can be regarded as a sublattice of $\Po(M^\infty(L^\delta))$. We then have:
\begin{prop}\label{prop:adjunctionD}
For any lattice $L$, the maps $u_L$ and $l_L$ restrict to a Galois connection $u_L : D^\wedge(L) \leftrightarrows D^\vee(L)^\op : l_L$. The lattice of Galois-closed elements of this Galois connection is isomorphic to $L$.
\end{prop}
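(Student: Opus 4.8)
The plan is to carry out everything on the spatial side, using the concrete descriptions already available: by Proposition~\ref{prop:isodwedge} we regard $D^\wedge(L)$ as the sublattice of $\Po(J^\infty(L^\delta))$ generated by $\{\hat a \mid a \in L\}$, and by Remark~\ref{rem:orderdual} we regard $D^\vee(L)^\op$ as the sublattice of $\Po(M^\infty(L^\delta))$ generated by $\{\check a \mid a \in L\}$. The key structural input is that, by the proof of Proposition~\ref{prop:isodwedge}, every element of $D^\wedge(L)$ has the form $\bigcup_{a \in T}\hat a$ for some finite $T \subseteq L$, and dually every element of $D^\vee(L)^\op$ has the form $\bigcup_{a \in U}\check a$ for some finite $U \subseteq L$. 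Reducing to these normal forms is what makes all the computations below finite and explicit.

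To prove that $u_L$ and $l_L$ restrict as claimed, I would use that, being the maps of a polarity, they turn unions into intersections. Together with the identity $u_L(\hat a) = \check a$ recorded just before the statement, and the collapse $\bigcap_{a\in T}\check a = \{y \mid \bigvee T \le y\} = \check{\bigvee T}$, this gives
\[ u_L\!\left(\bigcup_{a \in T}\hat a\right) = \bigcap_{a\in T}\check a = \check{\textstyle\bigvee T}, \]
so $u_L$ sends $D^\wedge(L)$ into $D^\vee(L)^\op$. For $l_L$ I would first establish the dual identity $l_L(\check a) = \hat a$: the inclusion $\hat a \subseteq l_L(\check a)$ is immediate, while the reverse inclusion uses that $a = \bigwedge \check a$ because $M^\infty(L^\delta)$ meet-generates $L^\delta$ (Proposition~\ref{prop:canextperfect}). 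Then, symmetrically,
\[ l_L\!\left(\bigcup_{a\in U}\check a\right) = \bigcap_{a\in U}\hat a = \hat{\textstyle\bigwedge U}, \]
so $l_L$ sends $D^\vee(L)^\op$ into $D^\wedge(L)$. Since the defining adjunction property of the polarity is inherited by the restrictions, the restricted maps form a Galois connection in the sense used in this paper.

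For the identification of the Galois-closed elements, I would compute the closure operator $l_L u_L$ on $D^\wedge(L)$ directly from the two displays above: for finite $T \subseteq L$,
\[ l_L u_L\!\left(\bigcup_{a\in T}\hat a\right) = l_L\!\left(\check{\textstyle\bigvee T}\right) = \hat{\textstyle\bigvee T}. \]
Thus an element of $D^\wedge(L)$ is Galois-closed exactly when it equals $\hat c$ for some $c \in L$; and every $\hat c$ is indeed closed, since $l_L u_L(\hat c) = l_L(\check c) = \hat c$. Hence the Galois-closed elements on the $D^\wedge(L)$-side are precisely $\{\hat c \mid c \in L\}$. Finally, $c \mapsto \hat c$ is an order isomorphism from $L$ onto this set: it is surjective by the preceding sentence, and it is an order embedding because $J^\infty(L^\delta)$ join-generates $L^\delta$ (Proposition~\ref{prop:canextperfect}), so that $c = \bigvee \hat c$ and therefore $\hat c \subseteq \hat d$ forces $c \le d$ (the converse being clear). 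An order isomorphism between lattices is a lattice isomorphism, which gives $L \cong$ the lattice of Galois-closed elements.

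The polarity identities and the union/intersection bookkeeping are routine; the one place demanding genuine care is keeping the order-dual conventions straight, namely the $\op$ on $D^\vee(L)$ and the reverse-inclusion ordering of a-filters fixed in Remark~\ref{rem:orderdual}, so that the antitone polarity maps really do assemble into a Galois connection in the intended sense. I expect the main conceptual step to be the closure-operator computation $l_L u_L(\bigcup_{a\in T}\hat a) = \hat{\bigvee T}$, since it is precisely this collapse onto the single generators $\hat c$ that pins down the Galois-closed elements and recovers $L$.
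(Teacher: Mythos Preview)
Your proof is correct and follows essentially the same route as the paper: both use the spatial representation of $D^\wedge(L)$ and $D^\vee(L)^\op$ via Proposition~\ref{prop:isodwedge} and Remark~\ref{rem:orderdual}, compute $u_L\big(\bigcup_{a\in T}\hat a\big)=\check{\bigvee T}$ (and dually for $l_L$), and conclude that the Galois-closed elements are exactly the sets $\hat c$ for $c\in L$. Your version is simply more explicit where the paper is terse, in particular in verifying $l_L(\check a)=\hat a$ via Proposition~\ref{prop:canextperfect} and in spelling out why $c\mapsto\hat c$ is an order isomorphism.
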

{\bf N.B.} {\it The restricted Galois connection in this proposition is between $D^\wedge(L)$ and the {\rm order dual} of $D^\vee(L)$. Therefore, it is also a (covariant) adjunction between $D^\wedge(L)$ and $D^\vee(L)$.}
\begin{proof}
Note that $D^\wedge(L)$ consists of finite unions of sets of the form $\hat{a}$. If $T \subseteq L$, then we have
\[ u_L\left(\bigcup_{a \in T} \hat{a}\right) = \bigcap_{a \in T} \check{a} = \check{t},\]
where $t := \bigvee T$. From this, it follows that $u_L(D^\wedge(L)) \subseteq D^\vee(L)$, and the analogous statement for $l_L$ is proved similarly. The lattice of Galois-closed elements under this adjunction is both isomorphic to the image of $u_L$ in $D^\vee(L)$ and the image of $l_L$ in $D^\wedge(L)$. Both of these lattices are clearly isomorphic to $L$.
\end{proof}

In the presentation of $D^\wedge(L)$ and $D^\vee(L)$ as finitely generated a-ideals and a-filters, the maps $u_L$ and $l_L$ act as follows. Given an a-ideal $I$ which is generated by a finite set $T \subseteq L$, $u_L(I)$ is the principal a-filter generated by $\bigvee T$. Conversely, given an a-filter $F$ which is generated by a finite set $S \subseteq L$, $l_L(F)$ is the principal a-ideal generated by $\bigwedge S$.

In light of Proposition~\ref{prop:adjunctionD}, we can combine $D^\wedge$ and $D^\vee$ to obtain a single functor $D$ into a category of {\it adjoint pairs between distributive lattices}. On {\it objects}, this functor $D$ sends a lattice $L$ to the adjoint pair $u_L : D^\wedge(L) \leftrightarrows D^\vee(L) : l_L$ (see Proposition~\ref{prop:adjunctionD} above). For the {\it morphisms} in the domain category of $D$, we take the intersection of the set of morphisms in $\Lwav$ and the set of morphisms in $\Lvaw$. This intersection is defined directly in the following definition.
\begin{dfn}\label{dfn:admmorph}
A function $f : L \to M$ between lattices is an {\it admissible homomorphism} if it is a lattice homomorphism which sends join-admissible subsets of $L$ to join-admissible subsets of $M$ and meet-admissible subsets of $L$ to meet-admissible subsets of $M$. We denote by $\mathbf{L_a}$ the category of lattices with admissible homomorphisms.
\end{dfn}
Indeed, $f$ is an admissible homomorphism if and only if it is a morphism both in $\mathbf{L_{\wedge,a\vee}}$ and in $\mathbf{L_{\vee,a\wedge}}$. Any homomorphism whose codomain is a distributive lattice is admissible. Also, any surjective homomorphism between arbitrary lattices is admissible, by Proposition~\ref{prop:surjadm}. This may be the underlying reason for the fact that both surjective homomorphisms and morphisms whose codomain is distributive have proven to be `easier' cases in the existing literature on lattice duality (see, e.g., \cite{Urq1978, Hartung1992}). Of course, not all homomorphisms are admissible, cf. Example~\ref{exa:badmorphism} above. In the next section, we will develop a topological duality for the category $\mathbf{L_a}$.

Let us end this section with a historical remark. The first construction of a canonical extension for lattices (although lacking an abstract characterization) was given in \cite{Harding1998}. This construction depended on the fact that any lattice occurs as the Galois-closed sets of some Galois connection. In this section we have given a `canonical' choice for this Galois connection. We will leave the precise statement of this last sentence to future work; also see the concluding section of this paper.

\section{Topological duality}\label{sec:duality}
In this section, we show how the results of this paper can be applied to the topological representation theory of lattices. First, we will focus our discussion on how the existing topological dualities for lattices by Urquhart \cite{Urq1978} and Hartung \cite{Hartung1992} relate to canonical extensions. We subsequently exploit this perspective on Hartung's duality to obtain examples of lattices for which the dual space is not sober, or does not have a spectral soberification (Examples~\ref{exa:notsober}~and~\ref{exa:notspectral}, respectively). The rest of the section will be devoted to obtaining an alternative topological duality for the category of lattices $\mathbf{L_a}$. In our duality, the spaces occurring in the dual category will be spectral. 

As already remarked in \cite[Remark 2.10]{GehHar2001}, the canonical extension can be used to obtain the topological polarity in Hartung's duality for lattices \cite{Hartung1992}. We now briefly recall how this works. As is proved in \cite[Lemma 3.4]{GehHar2001}, the set $J^\infty(L^\delta)$ is in a natural bijection with the set of filters $F$ which are maximally disjoint from some ideal $I$, and the set $M^\infty(L^\delta)$ is in a natural bijection with the set of ideals which are maximally disjoint from some filter $F$. These are exactly the sets used by Hartung \cite{Hartung1992} in his topological representation for lattices. The topologies defined in \cite{Hartung1992} can be recovered from the embedding $L \embeds L^\delta$, as follows (cf. Figure~\ref{fig:egg2}). 

\vspace{-1em}

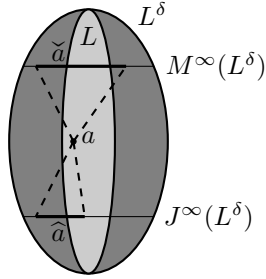
\begin{figure}[htp]
\begin{center}
\begin{tikzpicture}[scale=0.5]
\filldraw[fill=black!50!white, draw=black, thick] (0,0) ellipse (60pt and 100pt);
\node at (1.7,3.4) {$L^\delta$};

\filldraw[fill=black!20!white, draw=black, thick] (0,0) ellipse (20pt and 100pt);
\node at (0,2.8) {$L$};

\draw (-1.75,-2) -- (1.75,-2);
\node at (3.2,-2) {$J^\infty(L^\delta)$};
\draw (-1.75,2) -- (1.75,2);
\node at (3.4,2) {$M^\infty(L^\delta)$};

\po{-0.4,0}
\node at (0,0.1) {$a$};

\draw[very thick] (-1.4,-2) -- (-0.1,-2);
\draw[thick, dashed] (-0.4,0) -- (-1.4,-2);
\draw[thick, dashed] (-0.4,0) -- (-0.1,-2);
\node at (-0.8,-2.4) {$\hat{a}$};

\draw[very thick] (-1.4,2) -- (1,2);
\draw[thick, dashed] (-0.4,0) -- (-1.4,2);
\draw[thick, dashed] (-0.4,0) -- (1,2);
\node at (-0.8,2.4) {$\check{a}$};
\end{tikzpicture}
\end{center}
\caption{Topological spaces from the embedding of a lattice $L$ into its canonical extension $L^\delta$. \label{fig:egg2}}
\end{figure}

\vspace{-1mm}

For $a \in L$ we define $\hat{a} := {\downarrow}a \cap J^\infty(L^\delta)$ and $\check{a} := {\uparrow}a  \cap M^\infty(L^\delta)$. Let $\tau_c^J$ be the topology on $J^\infty(L^\delta)$ given by taking $\{\hat{a} : a \in L\}$ as a subbasis for the closed sets. Let $\tau_c^M$ be the topology on $M^\infty(L^\delta)$ given by taking $\{\check{a} : a \in L\}$ as a subbasis for the closed sets. Finally, let $R_L$ be the relation defined by $x \, R_L \, y$ if, and only if, $x \leq_{L^\delta} y$. This topological polarity $((J^\infty(L^\delta),\tau_c^J),(M^\infty(L^\delta),\tau_c^M),R_L)$ is now exactly (isomorphic to) Hartung's topological polarity $\mathbb{K}^\tau(L)$ from \cite[Definition 2.1.6]{Hartung1992}.

Before Hartung, Urquhart \cite{Urq1978} had already defined the dual structure of a lattice to be a doubly ordered topological space $(Z,\tau,\leq_1,\leq_2)$ whose points are maximal pairs $(F,I)$. We briefly outline how this structure can be obtained from the canonical extension. Let $P$ be the subset of $J^\infty(L^\delta) \times M^\infty(L^\delta)$ consisting of pairs $(x,y)$ such that $x \nleq_{L^\delta} y$, i.e., $P$ is the set-theoretic complement of the relation $R_L$ in Hartung's polarity. Then $P$ inherits the subspace topology from the product topology $\tau_c^J \times \tau_c^M$ on $J^\infty(L^\delta) \times M^\infty(L^\delta)$. We define an order $\preceq$ on $P$ by $(x,y) \preceq (x',y')$ iff $x \geq_{L^\delta} x'$ and $y \leq_{L^\delta} y'$; in other words, $\preceq$ is the restriction of the product of the dual order and the usual order of $L^\delta$. Urquhart's space $(Z,\tau)$ then corresponds to the subspace of $\preceq$-maximal points of $P$, and the orders $\leq_1$ and $\leq_2$ correspond to the projections of the order $\preceq$ onto the first and second coordinate, respectively.

In the following two examples, we prove that the spaces which occur in Hartung's duality may lack the nice properties that dual spaces of distributive lattices always have. 

\begin{exa}[A lattice whose dual topology is not sober]\label{exa:notsober}
Let $L$ be a countable antichain with top and bottom, as depicted in Figure~\ref{fig:Minfty}.


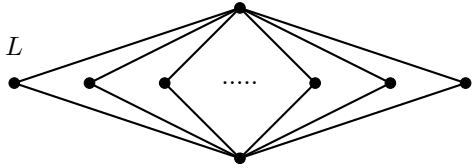
\begin{figure}[htp]
\begin{center}
\begin{tikzpicture}
\node at (0,0.5) {$L$};
\po{0,0}
\po{1,0}
\po{2,0}
\po{4,0}
\po{5,0}
\po{6,0}
\po{3,1}
\po{3,-1}

\node at (3,0) {.....};

\li{(3,-1)--(0,0)--(3,1)}
\li{(3,-1)--(1,0)--(3,1)}
\li{(3,-1)--(2,0)--(3,1)}
\li{(3,-1)--(4,0)--(3,1)}
\li{(3,-1)--(5,0)--(3,1)}
\li{(3,-1)--(6,0)--(3,1)}
\end{tikzpicture} 
\end{center}
\vspace{-1em}
\caption{The lattice $L$, a countable antichain with top and bottom.\label{fig:Minfty}}
\end{figure}

\vspace{-3mm}

One may easily show that $\id: L \to L$ is a canonical extension, so $L = L^\delta$. The set $J^\infty(L)$ is the countable antichain (as is the set $M^\infty(L)$). The topology $\tau_c^J$ is the cofinite topology on a countable set, which is not sober: the entire space is itself a closed irreducible subset which is not the closure of a point.

Also note that if one instead would define a topology on $J^\infty(L)$ by taking the sets $\hat{a}$, for $a \in L$, to be open, instead of closed, then one obtains the discrete topology on $J^\infty(L)$, which is in particular not compact.\qed
\end{exa}

In the light of the above example, one may wonder whether the {\it soberification} of the space $(J^\infty(L),\tau_c^J)$ might have better properties, and in particular whether it will be spectral. However, the following example shows that it cannot be, since the frame of opens of the topological space $(J^\infty(L),\tau_c^J)$ in the following example fails to have the property that intersections of compact elements are compact.

\begin{exa}[A lattice whose dual topology is not arithmetic]\label{exa:notspectral}
Consider the lattice $K$ depicted in Figure~\ref{fig:notspectral}. In this figure, the elements of the original lattice $K$ are drawn as filled dots, and the three additional elements $a$, $b$ and $c$ of the canonical extension $K^\delta$ are drawn as unfilled dots. 



The set $J^\infty(K^\delta)$ is $\{b_i, c_i, z_i \ | \ i \geq 0\} \cup \{b,c\}$. We take $\{\hat{a} : a \in K\}$ as a subbasis for the closed sets, so $\{(\hat{a})^c : a \in K\}$ is a subbasis for the open sets.

In particular, $(\hat{b_0})^c$ and $(\hat{c_0})^c$ are compact open sets. However, their intersection is not compact: $\{ (\hat{a_n})^c \}_{n=0}^\infty$ is an open cover of $(\hat{b_0})^c \cap (\hat{c_0})^c = \{z_i : i \geq 0\}$ with no finite subcover. \qed
\end{exa}
\newpage

\begin{figure}[htp]
\begin{center}
\begin{tikzpicture}[scale=0.7]
\node at (7,6) {$K \embeds K^\delta$};

\po{1,3}
\node[left] at (1,2.8) {$b_2$};

\po{3,3}
\node[right] at (3,2.8) {$c_2$};

\po{2,4}
\node[right] at (2,4.1) {$a_2$};

\li{(1,3)--(2,4)--(3,3)}

\po{1,4}
\node[left] at (1,3.8) {$b_1$};

\po{3,4}
\node[right] at (3,3.8) {$c_1$};

\po{2,5}
\node[right] at (2,5.1) {$a_1$};

\li{(1,3)--(1,4)}
\li{(3,3)--(3,4)}
\li{(2,4)--(2,5)}

\li{(1,4)--(2,5)--(3,4)}

\po{1,5}
\node[left] at (1,4.8) {$b_0$};

\po{3,5}
\node[right] at (3,4.8) {$c_0$};

\po{2,6}
\node[right] at (2,6.1) {$a_0$};

\li{(1,4)--(1,5)}
\li{(3,4)--(3,5)}
\li{(2,5)--(2,6)}

\li{(1,5)--(2,6)--(3,5)}

\node[rotate=90] at (1,2) {$\dots$};
\node[rotate=90] at (3,2) {$\dots$};
\node[rotate=90] at (2,3) {$\dots$};

\po{2,0}
\node[below] at (2,0) {$0$};

\li{(2,0)--(1,1)--(2,2)--(3,1)--(2,0)}

\filldraw[fill=white](1,1) circle (2 pt);
\node[left] at (1,1) {$b$};

\filldraw[fill=white](2,2) circle (2 pt);
\node[above] at (2,2) {$a$};

\filldraw[fill=white](3,1) circle (2 pt);
\node[right] at (3,1) {$c$};

\po{7.5,1}
\node[right] at (7.5,1.2) {$z_0$};
\po{6.5,1}
\node[right] at (6.5,1.2) {$z_1$};
\po{5.5,1}
\node[right] at (5.5,1.2) {$z_2$};
\node at (4.5, 1) {$\dots$};

\draw (2,6) to [bend left=10] (7.5,1);
\draw (2,5) to [bend left=10] (6.5,1);
\draw (2,4) to [bend left=10] (5.5,1);
\li{(7.5,1)--(2,0)}
\li{(6.5,1)--(2,0)}
\li{(5.5,1)--(2,0)}
\end{tikzpicture} 


\caption{The lattice $K$, for which $(J^\infty(K^\delta),\tau_c)$ is not spectral.\label{fig:notspectral}}
\end{center}
\end{figure}
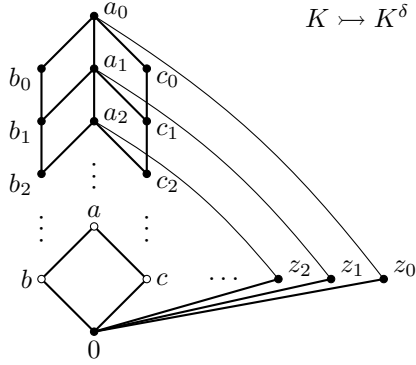

\vspace{-3em}

The above examples indicate that the spaces obtained from Hartung's duality can be badly behaved. In particular, they do not fit into the framework of the duality between sober spaces and spatial frames. The individual spaces which occur in Hartung's duality may fail to be the Stone duals of any distributive lattice.

In the remainder of this section, we combine the facts from Section~\ref{sec:envelopes} with the existing Stone-Priestley duality for distributive lattices to obtain a duality for a category of lattices with admissible homomorphisms (see Definition~\ref{dfn:admmorph} below). Since the $\Lwav$-morphisms are {\it exactly} the morphisms which can be lifted to homomorphisms between the $D^\wedge$-envelopes, these morphisms also correspond exactly to the lattice morphisms which have functional duals between the $X$-sets of the dual polarities; the same remark applies to $\Lvaw$-morphisms and the $Y$-sets of the dual polarities. The duals of admissible morphisms will be pairs of functions; one function being the dual of the `$\Lwav$-part' of the admissible morphism, the other being the dual of the `$\Lvaw$-part' of the morphism.

We will now first define an auxiliary category of `doubly dense adjoint pairs between distributive lattices' ($\mathbf{daDL}$) which has the following two features:
\begin{enumerate}
\item The category $\mathbf{L_a}$ can be embedded into $\mathbf{daDL}$ as a full subcategory (Proposition~\ref{prop:latticesasdaDLs});
\item There is a natural Stone-type duality for $\mathbf{daDL}$ (Theorem~\ref{thm:dualitydaDL}).
\end{enumerate}
We will then give a dual characterization of the `special' objects in $\mathbf{daDL}$ which are in the image of the embedding of $\mathbf{L_a}$ from (i), calling these dual objects {\it tight} (cf. Definition~\ref{dfn:tight}). The restriction of the natural Stone-type duality (ii) will then yield our final result: a topological duality for lattices with admissible homomorphisms (Theorem~\ref{thm:mainduality}).

\begin{dfn}\label{dfn:daDL}We denote by $\mathbf{aDL}$ the category with:
\begin{itemize}
\item {\it objects:} tuples $(D,E,f,g)$, where $D$ and $E$ are distributive lattices and $f : D \leftrightarrows E : g$ is a pair of maps such that $f$ is lower adjoint to $g$; 
\item {\it morphisms:} an $\mathbf{aDL}$-morphism from $(D_1,E_1,f_1,g_1)$ to $(D_2,E_2,f_2,g_2)$ is a pair of homomorphisms $h^\wedge : D_1 \to D_2$ and $h^\vee : E_1 \to E_2$ such that $h^\vee f_1 = f_2 h^\wedge$ and $h^\wedge g_1 = g_2 h^\vee$, i.e., both squares in the following diagram commute:
\begin{center}
\begin{tikzpicture}[scale=0.8]
\matrix (m) [matrix of math nodes, row sep=3em, column sep=2.5em, text height=1.5ex, text depth=0.25ex] 
{ D_1 & & E_1 \\
D_2 & & E_2 \\};
\path[->] (m-1-1) edge[bend right=10] node[below] {$f_1$} (m-1-3);
\path[->] (m-1-3) edge[bend right=10] node[above] {$g_1$} (m-1-1);
\path[->] (m-2-1) edge[bend right=10] node[below] {$f_2$} (m-2-3);
\path[->] (m-2-3) edge[bend right=10] node[above] {$g_2$} (m-2-1);

\path[->] (m-1-1) edge node[left] {$h^\wedge$} (m-2-1);
\path[->] (m-1-3) edge node[right] {$h^\vee$} (m-2-3);
\end{tikzpicture}
\end{center}
\end{itemize}

We call an adjoint pair $(D,E,f,g)$ {\it doubly dense} if both $g[E]$ is join-dense in $D$ and $f[D]$ is meet-dense in $E$. We denote by $\mathbf{daDL}$ the full subcategory of $\mathbf{aDL}$ whose objects are doubly dense adjoint pairs.
\end{dfn}

\begin{prop}\label{prop:latticesasdaDLs}
The category $\mathbf{L_a}$ is equivalent to a full subcategory of $\mathbf{daDL}$.
\end{prop}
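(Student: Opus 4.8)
The plan is to exhibit a fully faithful functor $D \colon \mathbf{L_a} \to \mathbf{daDL}$; since a fully faithful functor is an equivalence onto the full subcategory spanned by its image, this yields the statement. On objects, I send a lattice $L$ to the tuple $(D^\wedge(L), D^\vee(L), u_L, l_L)$, with $u_L$ and $l_L$ the restricted maps of Proposition~\ref{prop:adjunctionD}. By that proposition together with the N.B.\ following it, $u_L$ is lower adjoint to $l_L$ as a covariant adjunction, so this is an object of $\mathbf{aDL}$. It is \emph{doubly dense}: the image $l_L[D^\vee(L)]$ is exactly the set of Galois-closed elements of $D^\wedge(L)$, namely the $\hat{a}$ with $a \in L$, and these join-generate $D^\wedge(L)$ by Proposition~\ref{prop:isodwedge}; dually $u_L[D^\wedge(L)] = \{\check{a} : a \in L\}$ is meet-dense in $D^\vee(L)$ by Remark~\ref{rem:orderdual}. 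On morphisms, an admissible homomorphism $f \colon L \to L'$ is by Definition~\ref{dfn:admmorph} simultaneously a morphism of $\Lwav$ and of $\Lvaw$, so functoriality of $D^\wedge$ (Corollary~\ref{cor:adjunction}) and of $D^\vee$ (Remark~\ref{rem:orderdual}) produces homomorphisms $D^\wedge(f)$ and $D^\vee(f)$; I set $D(f) := (D^\wedge(f), D^\vee(f))$.

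Next I would check that $D(f)$ really is a $\mathbf{daDL}$-morphism, i.e.\ that the two squares $D^\vee(f)\circ u_L = u_{L'} \circ D^\wedge(f)$ and $D^\wedge(f) \circ l_L = l_{L'}\circ D^\vee(f)$ commute. Since $u_L$ sends the join-generator $\hat{a}$ to $\check{a}$, while $D^\wedge(f)$ and $D^\vee(f)$ send $\hat{a} \mapsto \widehat{f(a)}$ and $\check{a} \mapsto \widecheck{f(a)}$ respectively, both composites agree on generators; using that $u_L$ turns $\bigvee_{a \in T}\hat{a}$ into $\check{t}$ with $t = \bigvee T$ and that $f$ is a homomorphism, the squares commute on all of $D^\wedge(L)$. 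Functoriality of $D$ is then inherited from that of $D^\wedge$ and $D^\vee$. Faithfulness is immediate, since $f$ can be read back off $D^\wedge(f)$ through $D^\wedge(f)\circ \eta^\wedge_L = \eta^\wedge_{L'}\circ f$ and the injectivity of $\eta^\wedge_{L'}$.

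The substantial part is fullness. Given a $\mathbf{daDL}$-morphism $(h^\wedge, h^\vee)$ from $D(L)$ to $D(L')$, the two commuting squares give $h^\wedge \circ (l_L u_L) = (l_{L'} u_{L'}) \circ h^\wedge$, so the homomorphism $h^\wedge$ commutes with the closure operator $c_L := l_L u_L$ whose fixpoints are the Galois-closed elements. Hence $h^\wedge$ carries Galois-closed elements to Galois-closed elements and restricts to a map $f \colon L \to L'$ under the identifications $L \cong \{\hat{a} : a \in L\}$ and $L' \cong \{\hat{b} : b \in L'\}$. I would then verify that $f$ is an admissible homomorphism: it preserves meets because meets of closed elements are computed in $D^\wedge(L)$ and $h^\wedge$ is meet-preserving, and it preserves joins because the join in $L$ is the $c_L$-closure of the join in $D^\wedge(L)$ and $h^\wedge$ commutes with $c_L$. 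For admissibility, recall that a finite $N \subseteq L$ is join-admissible exactly when $\bigvee_{m \in N}\hat{m}$ is already $c_L$-closed (Lemmas~\ref{lem:joinadmcanext} and~\ref{lem:emorphism}); applying the $D^\wedge$-homomorphism $h^\wedge$, which sends closed elements to closed elements, shows $\bigvee_{m \in N}\widehat{f(m)}$ is closed, i.e.\ $f[N]$ is join-admissible, and the dual argument with $h^\vee$ handles meet-admissibility.

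Finally I would confirm $D(f) = (h^\wedge, h^\vee)$. Both $h^\wedge$ and $D^\wedge(f)$ are homomorphisms sending each join-generator $\hat{a}$ to $\widehat{f(a)}$, hence they coincide; the square $h^\vee u_L = u_{L'} h^\wedge$ evaluated on $\hat{a}$ shows that the lattice map extracted from $h^\vee$ equals $f$, so $h^\vee = D^\vee(f)$ as well. I expect the main obstacle to lie precisely in the fullness argument: translating the single abstract hypothesis that $h^\wedge$ and $h^\vee$ intertwine the adjunction into the two separate admissibility conditions on $f$, and checking that the lattice maps extracted independently from the $D^\wedge$- and $D^\vee$-parts are forced to agree.
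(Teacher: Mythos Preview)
Your proposal is correct and follows essentially the same strategy as the paper: define the functor via the two envelope functors, verify the adjoint-pair squares on join-generators, and for fullness use the commuting squares to show that $h^\wedge$ preserves Galois-closed elements, extract the lattice map $f$, and check it is an admissible homomorphism with $D(f)=(h^\wedge,h^\vee)$. Your phrasing in terms of the closure operator $c_L = l_L u_L$ (in particular the characterisation of join-admissibility of $N$ as $c_L$-closedness of $\bigvee_{m\in N}\hat m$) is a clean alternative to the paper's appeal to the universal property of $D^\wedge$ for the admissibility step, but the underlying argument is the same.
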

\newcommand{\D}{\mathcal{D}}
\begin{proof}
Let $\D : \mathbf{L_a} \to \mathbf{daDL}$ be the functor defined by sending: 
\begin{itemize}
\item a lattice $L$ to $\D(L) := (D^\wedge(L),D^\vee(L),u,l)$,
\item an admissible morphism $h : L_1 \to L_2$ to the pair $\D(h) := (D^\wedge(h), D^\vee(h))$.
\end{itemize}
We show that $\D$ is a well-defined full and faithful functor.

For objects, note that $\D(L)$ is a doubly dense adjoint pair by Corollary~\ref{cor:univchar} and Proposition~\ref{prop:adjunctionD} in the previous section. 

Let $h : L_1 \to L_2$ be an admissible morphism. We need to show that $\D(h)$ is a morphism of $\mathbf{daDL}$, i.e., that $u_{L_2} \circ D^\wedge(h) = D^\vee(h) \circ u_{L_1}$ and $l_{L_2} \circ D^\vee(h) = D^\wedge(h) \circ l_{L_1}$. Since $D^\wedge(L_1)$ is join-generated by the image of $L_1$, and both $u_{L_2} \circ D^\wedge(h)$ and $D^\vee(h) \circ u_{L_1}$ are join-preserving, it suffices to note that the diagram commutes for elements in the image of $L_1$. This is done by the following diagram chase:
\[ u_{L_2} \circ D^\wedge(h) \circ \eta^\wedge_{L_1} = u_{L_2} \circ \eta^\wedge_{L_2} \circ h = \eta^\vee_{L_2} \circ h = D^\vee(h) \circ \eta^\vee_{L_1}= D^\vee(h) \circ u_{L_1} \circ \eta^\wedge_{L_1},\]
where we have used that $\eta^\wedge$ is a natural transformation and that $u_L \circ \eta^\wedge_L = \eta^\vee_L$. The proof that $l_{L_2} \circ D^\vee(h) = D^\wedge(h) \circ l_{L_1}$ is similar.

It remains to show that the assignment $h \mapsto \D(h)$ is a bijection between $\mathbf{L_a}(L_1,L_2)$ and $\mathbf{daDL}(\D(L_1),\D(L_2))$. If $(h^\wedge,h^\vee) : \D(L_1) \to \D(L_2)$ is a $\mathbf{daDL}$-morphism, then $h^\wedge$ maps lattice elements to lattice elements. That is, the function $h^\wedge \circ \eta^\wedge_{L_1} : L_1 \to \D(L_2)$ maps into $\im(\eta^\wedge_{L_2}) = \im(l_{L_2})$, since
\begin{align*}
h^\wedge \circ \eta^\wedge_{L_1} &= h^\wedge \circ l_{L_1} \circ u_{L_1} \circ \eta^\wedge_{L_1} \\
&= l_{L_2} \circ h^\vee \circ u_{L_1} \circ \eta^\wedge_{L_1}.
\end{align*}
We may therefore define $h : L_1 \to L_2$ to be the function $(\eta^\wedge_{L_2})^{-1} \circ h^\wedge \circ \eta^\wedge_{L_1}$. Note that this function is equal to $(\eta^\vee_{L_2})^{-1} \circ h^\vee \circ \eta^\vee_{L_1}$, since
\begin{align*}
(\eta^\vee_{L_2})^{-1} \circ h^\vee \circ \eta^\vee_{L_1} &= (\eta^\vee_{L_2})^{-1} \circ h^\vee \circ u_{L_1} \circ l_{L_1} \circ \eta^\vee_{L_1} \\
&= (\eta^\vee_{L_2})^{-1} \circ u_{L_2} \circ h^\wedge \circ \eta^\wedge_{L_1} \\
&=  (\eta^\wedge_{L_2})^{-1} \circ h^\wedge \circ \eta^\wedge_{L_1},
\end{align*}
where we have used that, for any lattice $L$, $l_L \circ \eta^\vee_{L} = \eta^\wedge_{L}$ and $u_L \circ \eta^\wedge_{L} = \eta^\vee_L$.
So, since $(\eta^\wedge_{L_2})^{-1} \circ h^\wedge \circ \eta^\wedge_{L_1} = h = (\eta^\vee_{L_2})^{-1} \circ h^\vee \circ \eta^\vee_{L_1}$, it is clear that $h$ is a homomorphism, since the left-hand-side preserves $\wedge$ and the right-hand-side preserves $\vee$. It remains to show that $h$ is admissible, i.e., that $h$ sends join-admissible subsets to join-admissible subsets, and meet-admissible subsets to meet-admissible subsets. Note that, by the adjunction in Corollary~\ref{cor:adjunction}, if a function $k : L \to D$ admits a homomorphic extension $\hat{k} : D^\wedge(L) \to D$, then $k$ is $(\wedge,a\vee)$-preserving, since it is equal to the composite $\hat{k} \circ \eta^\wedge_L$. In particular, the map $\eta^\wedge_{L_2} \circ h$ is $(\wedge,a\vee)$-preserving, its homomorphic extension being $h^\wedge$. It follows from this that $h$ sends join-admissible subsets to join-admissible subsets, since join-admissible subsets are the {\it only} subsets whose join is preserved by $\eta^\wedge_{L_2}$. The proof that $h$ preserves meet-admissible subsets is similar.

Now, since $h^\wedge \circ \eta^\wedge_{L_1} = \eta^\wedge_{L_2} \circ h$, we have that $h^\wedge = D^\wedge(h)$, since $D^\wedge(h)$ was defined as the unique homomorphic extension of $\eta^\wedge_{L_2} \circ h$, and similarly $h^\vee = D^\vee(h)$. We conclude that $(h^\wedge,h^\vee) = \D(h)$, so $h \mapsto \D(h)$ is surjective.

It is clear that if $h \neq h'$, then $D^\wedge(h) \neq D^\wedge(h')$, so $\D(h) \neq \D(h')$. Hence, the assignment $h \mapsto \D(h)$ is bijective, as required.
\end{proof}

\begin{exa}
[Not every object of $\mathbf{daDL}$ is the distributive envelope of a lattice] 
\label{exa:notalldaDLs}
Take any distributive lattice $D$ and consider the daDL $(F_\vee(D,\wedge), F_\wedge(D, \vee), f,g)$, where $F_\vee(D,\wedge)$ is the free join-semilattice generated by the meet-semilattice reduct of $D$ viewed as a distributive lattice, $F_\wedge(D, \vee)$ is defined order dually, and $f$ and $g$ both are determined by sending each generator to itself. Such a daDL is not of the form we are interested in since the $\wedge$- and $\vee$-envelopes of any distributive lattice both are equal to the lattice itself since all joins are admissible.
\end{exa}

The above example shows that the category $\mathbf{L_a}$, that we will be most interested in, is a proper subcategory of $\mathbf{daDL}$, but we start by giving a description of the topological duals of the objects of $\mathbf{daDL}$.
To this end, let $(D,E,f,g)$ be a doubly dense adjoint pair. If $X$ and $Y$ are the dual Priestley spaces of $D$ and $E$ respectively, then it is well-known that an adjunction $(f,g)$ corresponds to a relation $R$ satisfying certain properties. In our current setting of {\it doubly dense} adjoint pairs, it turns out that it suffices to consider the topological reducts of the Priestley spaces $X$ and $Y$ (i.e., forgetting the order) and the relation $R$ between them. Both the Priestley orders of the spaces $X$ and $Y$ and the adjunction $(f,g)$ can be uniquely reconstructed from the relation $R$, as we will prove shortly. The dual of a doubly dense adjoint pair will be a {\it totally separated compact polarity} (TSCP), which we define to be a polarity $(X,Y,R)$, where $X$ and $Y$ are Boolean spaces and $R$ is a relation from $X$ to $Y$, satisfying certain properties (see Definition~\ref{dfn:TSCP} for the precise definition).

We now first fix some useful terminology for topological polarities, regarding the closure and interior operators induced by a polarity, its closed and open sets, and its associated quasi-orders.

Let $X$ and $Y$ be sets and $R\subseteq X\times Y$. Then we obtain a closure operator $\overline{(\ \ )}$ on $X$ given by 
\[
\overline{S} := \{x\in X\mid R[x]\subseteq R[S]\}\ \mbox{ for } S\subseteq X.
\]
The subsets $S$ of $X$ satisfying $\overline{S}=S$ we will call {\it $R$-closed}. The $R$-closed subsets of $X$ form a lattice in which the meet is intersection and join is the closure of the union. 
We of course also obtain an adjoint pair of maps:
\begin{center}
\begin{tikzpicture}
\matrix (m) [matrix of math nodes, row sep=3em, column sep=2.5em, text height=1.5ex, text depth=0.25ex] 
{ {\mathcal P}(X) & & {\mathcal P}(Y)  \\};
\path[->] (m-1-1) edge[bend right=10] node[below] {$\Diamond$} (m-1-3);
\path[->] (m-1-3) edge[bend right=10] node[above] {$\Box$} (m-1-1);
\end{tikzpicture}
\end{center}
given by 
\[
\Diamond S=R[S]=\{y\in Y\mid \exists x\in S \ xRy\}
\]
 and 
 \[
 \Box T=\left(R^{-1}[T^c]\right)^c=\{x\in X\mid \forall y\in Y ( xRy \implies y\in T )\}.
 \]
The relation with the closure operator on $X$ is that $\overline{S}=\Box\Diamond S$. Note also that on points of $X$ this yields a quasi-order given by 
\[
x'\leq x \ \iff\ R[x']\subseteq R[x].
\]
Similarly, on $Y$ we obtain an interior operator
\[
T^\circ=\{y\in Y\mid \exists x\in X \left[ xRy \mbox{ and } \forall y'\in Y (xRy'\implies y'\in T)\right]\}=\Diamond\Box T
\]
and a quasi-order on $Y$ given by 
\[
y\leq y' \ \iff\ R^{-1}[y]\supseteq R^{-1}[y'].
\]
The range of $\Diamond$ is equal to the range of the interior operator, and we call these sets $R$-open. This collection of subsets of $Y$ forms a lattice isomorphic to the one of $R$-closed subsets of $X$. In this incarnation, the join is given by union whereas the meet is given by interior of the intersection. Note that the $R$-closed subsets of $X$ as well as the $R$-open subsets of $Y$ all are down-sets in the induced quasi-orders. 

We are now ready to define the objects which will be dual to doubly dense adjoint pairs. 

\begin{dfn}\label{dfn:TSCP}
A {\it topological polarity} is a tuple $(X,Y,R)$, where $X$ and $Y$ are topological spaces and $R$ is a relation. A {\it compact polarity} is a topological polarity in which both $X$ and $Y$ are compact. A topological polarity is {\it totally separated} if it satisfies the following conditions:
\begin{enumerate}
\item {\it ($R$-separated)} The quasi-orders induced by $R$ on $X$ and $Y$ are partial orders.
\item {\it ($R$-operational)} For each clopen down-set $U$ of $X$, the image $\Diamond U$ is clopen in $Y$;  For each clopen down-set $V$ of $Y$, the image $\Box V$ is clopen in $X$;
\item {\it (Totally $R$-disconnected)} For each $x\in X$ and each $y\in Y$, if $\neg (x R y)$ then there are clopen sets $U\subseteq X$ and $V\subseteq Y$ with $\Diamond U=V$ and $\Box V=U$ so that $x\in U$, and $y\not\in V$.
\end{enumerate}
In what follows, we often abbreviate ``totally separated compact polarity'' to TSCP.
\end{dfn}

\begin{rem}
In the definition of totally separated topological polarities, the first property states that $R$ separates the points of $X$ as well as the points of $Y$. The second property states that $R$ yields operations between the clopen downsets of $X$ and of $Y$. Finally, the third property generalizes total order disconnectedness, well known from Priestley duality, hence the name total R-disconnectedness. 
\end{rem}
The following technical observation about total $R$-disconnectedness will be useful in what follows. 
\begin{lem}\label{lem:Lsep}
If a topological polarity $(X,Y,R)$ is totally $R$-disconnected, then the following hold:
\begin{itemize}
\item $\text{If } x'\nleq x \text{ then there exists }U\subseteq X \text{ clopen and $R$-closed such that } \\ x\in U \text{ and } x'\not\in U. $

\item $\text{If } y'\nleq y \text{ then there exists }V\subseteq Y \text{ clopen and $R$-open such that } \\ y\in V \text{ and } y'\not\in V. $
\end{itemize}
\end{lem}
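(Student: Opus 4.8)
The plan is to deduce both separation statements directly from total $R$-disconnectedness, by first translating each hypothesis on the induced quasi-orders into a failure of the relation $R$, and then reading off the required clopen set from the definition. In both cases the work is entirely a matter of unwinding definitions and invoking the identities $\overline{U} = \Box\Diamond U$ and $V^\circ = \Diamond\Box V$ recorded above.

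For the first bullet, I would start from $x' \nleq x$. By the definition of the quasi-order on $X$, this means $R[x'] \not\subseteq R[x]$, so I may fix a witness $y \in Y$ with $x' R y$ but $\neg(x R y)$. Applying total $R$-disconnectedness to the pair $(x,y)$ yields clopen sets $U \subseteq X$ and $V \subseteq Y$ with $\Diamond U = V$, $\Box V = U$, $x \in U$ and $y \notin V$. This $U$ is the set I want: it is clopen by construction, it contains $x$, and it is $R$-closed because $\overline{U} = \Box\Diamond U = \Box V = U$. Finally $x' \notin U$, for otherwise $y \in R[x'] \subseteq R[U] = \Diamond U = V$, contradicting $y \notin V$.

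The second bullet is handled by the order-dual argument. From $y' \nleq y$, the definition of the quasi-order on $Y$ gives $R^{-1}[y'] \not\supseteq R^{-1}[y]$, so I fix $x \in X$ with $x R y$ but $\neg(x R y')$. Applying total $R$-disconnectedness to $(x,y')$ produces clopen sets $U$ and $V$ with $\Diamond U = V$, $\Box V = U$, $x \in U$ and $y' \notin V$. Here $V$ is the desired set: it is clopen, and it is $R$-open since $V^\circ = \Diamond\Box V = \Diamond U = V$; moreover $y \in V$ because $y \in R[x] \subseteq R[U] = \Diamond U = V$, while $y' \notin V$ by construction.

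I do not expect a genuine obstacle, since the argument is a direct translation through the definitions. The only point requiring care is the bookkeeping of the directions of the two quasi-orders, so as to extract the correct non-related pair $(x,y)$ (respectively $(x,y')$) and to feed it into total $R$-disconnectedness in the right order; one must also remember to invoke $\overline{U} = \Box\Diamond U$ and $V^\circ = \Diamond\Box V$ to certify $R$-closedness and $R$-openness rather than merely clopenness.
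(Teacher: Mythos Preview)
Your proof is correct and follows essentially the same route as the paper: translate $x' \nleq x$ into a witness $y$ with $x'Ry$ and $\neg(xRy)$, apply total $R$-disconnectedness to $(x,y)$, and verify $R$-closedness via $U = \Box V = \Box\Diamond U$. The paper leaves the second bullet to the reader as ``dual'', whereas you spell it out explicitly with the correct bookkeeping; the arguments are otherwise identical.
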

\begin{proof}
Suppose that $x' \nleq x$. By definition of $\leq$, there exists $y \in Y$ such that $x' \, R \, y$ and $\neg(x\,R\,y)$. By total R-disconnectedness, there exist clopen $U$ and $V$ such that $\Diamond U = V$, $\Box V = U$, $x \in U$ and $y \not\in V$. We now have $x' \not\in U$, for otherwise we would get $y \in \Diamond U = V$. Since $U = \Box V = \Box \Diamond U$, we get that $U$ is $R$-closed, as required. The proof of the second property is dual.
\end{proof}

Now, given a daDL $(D,E,f,g)$, we call its {\it dual polarity} the tuple $(X,Y,R)$, where $X$ and $Y$ are the topological reducts of the Priestley dual spaces of $D$ and $E$, respectively (which are in particular compact), and $R$ is the relation defined by
\[ 
x \, R \, y \iff f[x] \subseteq y,
\]
where we regard the points of $X$ and $Y$ as prime filters of $D$ and $E$, respectively.

Conversely, given a totally separated compact polarity $(X,Y,R)$, we call its {\it dual adjoint pair} the tuple $(D,E,\Diamond, \Box)$, where $D$ and $E$ are the lattices of clopen downsets of $X$ and $Y$ in the induced orders, respectively, and $\Diamond$ and $\Box$ are the operations defined above (note that these operations are indeed well-defined by item (ii) in the definition of totally separated).

Note that if $L$ is a distributive lattice, then its associated daDL is $(L,L,\id,\id)$, which has dual polarity $(X,X,\leq)$, where $(X,\leq)$ is the usual Priestley dual space of $L$. Thus, the above definitions generalize Priestley duality.

The following three propositions constitute the object part of our duality for doubly dense adjoint pairs.
\begin{prop}
If $(D,E,f,g)$ is a doubly dense adjoint pair, then its dual polarity $(X,Y,R)$ is compact and totally separated.
\end{prop}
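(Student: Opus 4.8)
The plan is to verify the three defining properties of a TSCP (Definition~\ref{dfn:TSCP}) for the dual polarity $(X,Y,R)$ of a doubly dense adjoint pair $(D,E,f,g)$, where $X$ and $Y$ are the topological reducts of the Priestley duals of $D$ and $E$. Compactness is immediate, since Priestley spaces are compact. The heart of the matter is to translate each property into a statement about the adjoint pair using the standard dictionary of Priestley duality, under which prime filters of $D$ are points of $X$, clopen downsets of $X$ correspond to elements of $D$, and analogously for $E$.

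First I would identify the two operations induced by $R$ with the maps $f$ and $g$ on the level of clopen downsets. Recall $x\,R\,y \iff f[x] \subseteq y$. The key computation is that under the Priestley isomorphism $D \cong \{\text{clopen downsets of } X\}$ sending $d \in D$ to $\widehat{d} := \{x \in X \mid d \in x\}$, one has $\Diamond \widehat{d} = \widehat{f(d)}$ and, dually, $\Box \widehat{e} = \widehat{g(e)}$ for $e \in E$. This is a routine unwinding of the definitions of $\Diamond$, $\Box$, and $R$, using that $f$ is lower adjoint to $g$ and that prime filters detect the order; once established, it immediately gives \textbf{($R$-operational)}, since $f(d) \in D$ and $g(e) \in E$ are again elements whose corresponding sets $\widehat{f(d)}$, $\widehat{g(e)}$ are clopen. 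It also shows that the $R$-closed clopen downsets of $X$ are exactly the $\widehat{d}$ with $d$ Galois-closed, and similarly on $Y$.

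Next I would establish \textbf{($R$-separated)}, i.e. that the induced quasi-orders on $X$ and $Y$ are antisymmetric, hence partial orders. Here is where \emph{double density} is used: suppose $x \leq x'$ and $x' \leq x$ in the induced quasi-order, meaning $R[x] = R[x']$. I want to conclude $x = x'$ as prime filters of $D$. Since $x, x'$ are prime filters, it suffices to separate them by an element of $D$; but $R[x] = R[x']$ says $f[x]$ and $f[x']$ generate the same up-set intersection pattern with prime filters of $E$, and because $g[E]$ is join-dense in $D$ every element of $D$ is a join of elements of the form $g(e)$, so the behaviour of $x$ is determined by which $g(e)$ it contains, which in turn is governed by $R$. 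Spelling this out using the adjunction and join-density yields $x = x'$; the dual argument, using that $f[D]$ is meet-dense in $E$, handles $Y$.

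Finally, \textbf{(Totally $R$-disconnected)} is the step I expect to be the main obstacle, since it is the genuine separation axiom and the place where total order-disconnectedness of the Priestley spaces must be combined with the adjunction. Given $x \in X$, $y \in Y$ with $\neg(x\,R\,y)$, i.e. $f[x] \not\subseteq y$, I would pick $d \in x$ with $f(d) \notin y$; then $U := \widehat{d}$ is a clopen downset with $x \in U$ and, setting $V := \Diamond U = \widehat{f(d)}$, we have $y \notin V$. It remains to arrange $\Box V = U$, i.e. to replace $d$ by its Galois closure $g f(d)$; here I would use that $d \leq gf(d)$ always holds and check that shrinking to a Galois-closed witness still separates $x$ from $y$, invoking double density to guarantee such a witness lies below an appropriate element still containing the relevant data. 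The care needed is to ensure the clopen sets $U$ and $V$ can be taken to satisfy \emph{both} $\Diamond U = V$ and $\Box V = U$ simultaneously while preserving the separation $x \in U$, $y \notin V$; this self-consistency, rather than the topology, is the crux, and it is exactly where the adjointness of $(f,g)$ together with density pays off.
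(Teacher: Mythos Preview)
Your plan is essentially the paper's proof: compactness is immediate, $R$-operationality follows from $\Diamond\widehat{d}=\widehat{f(d)}$ and $\Box\widehat{e}=\widehat{g(e)}$, $R$-separation uses join-density of $g[E]$ (the paper phrases this as constructing an explicit witness $y$ via the prime filter theorem, whereas you argue that $R[x]$ determines $g^{-1}(x)$ and hence $x$ by primeness and join-density; both work), and total $R$-disconnectedness is obtained by replacing $d$ with $gf(d)$.

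Two small corrections on the last step. First, passing from $d$ to $gf(d)$ is not ``shrinking'': since $d\leq gf(d)$ you have $\widehat{d}\subseteq\widehat{gf(d)}$, so $U$ gets \emph{larger}, which is exactly why $x\in U$ is preserved. Second, double density plays no role in total $R$-disconnectedness; only the adjunction identity $fgf=f$ is needed, giving $\Diamond\widehat{gf(d)}=\widehat{f(gf(d))}=\widehat{f(d)}=V$ and $\Box V=\widehat{g(f(d))}=U$ directly. Density is used only for $R$-separation.
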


\begin{proof}
Let $(D,E,f,g)$ be a doubly dense adjoint pair, and let $L$ be the lattice which is isomorphic to both the image of $g$ in $D$ and to the image of $f$ in $E$.

The dual polarity $(X,Y,R)$ is compact because the dual Priestley spaces of $D$ and $E$ are compact.

For $R$-separation, suppose that $x \neq x'$ in $X$. We need to show that $R[x] \neq R[x']$. Without loss of generality, pick $d \in D$ such that $d \in x$ and $d \not\in x'$. Since $L$ is join-dense in $D$ and $x$ is a prime filter, there exists $a \in L$ with $a \leq d$, such that $a \in x$. Note that $a \not\in x'$ since $a \leq d$ and $d \not\in x'$. It follows that $f(a) \not\in f[x']$: if we would have $d' \in x'$ such that $f(a) = f(d')$, then we would get $d' \leq gf(d') = gf(a) = a$, contradicting that $a \not\in x'$. By the prime filter theorem, there exists a prime filter $y \subseteq E$ such that $f[x'] \subseteq y$ and $f(a) \not\in y$. Since we do have $f(a) \in f[x]$, it follows that $x' \, R \, y$ and $\neg(x \, R \, y)$, so $R[x'] \neq R[x]$, as required. The proof that $R$ induces a partial order on $Y$ is similar.


For $R$-operationality, it suffices to observe that, for any $d \in D$, we have $\Diamond \hat{d} = R[\hat{d}] = \hat{f(d)}$ and, for any $e \in E$, we have $\Box \hat{e} = \hat{g(e)}$. 

For total $R$-disconnectedness, suppose that $\neg (x \, R \, y)$. This means that $f[x] \not\subseteq y$, so there is $d \in D$ such that $d \in x$ and $f(d) \not\in y$. Since $d \leq gf(d)$, we get $gf(d) \in x$, so we may put $U := \hat{gf(d)}$ and $V := \hat{f(d)}$.
\end{proof}

\begin{prop}
If $(X,Y,R)$ is a totally separated compact polarity, then its dual adjoint pair is doubly dense.
\end{prop}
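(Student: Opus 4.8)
The plan is to verify the two density requirements of a doubly dense adjoint pair separately, after first recording the key fact that the clopen $R$-closed downsets of $X$ are \emph{exactly} the sets $\Box V$ with $V \in E$, and dually that the clopen $R$-open downsets of $Y$ are exactly the sets $\Diamond U$ with $U \in D$. One inclusion is immediate: $\Box V$ is $R$-closed since $\Box\Diamond\Box = \Box$ by the adjunction $\Diamond \dashv \Box$, it is a downset because $R$-closed sets are downsets, and it is clopen by $R$-operationality. Conversely, if $W \subseteq X$ is clopen and $R$-closed then $W = \Box\Diamond W$, and $\Diamond W$ is a clopen $R$-open downset of $Y$, hence lies in $E$, again by $R$-operationality; thus $W \in \Box[E]$. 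The analogous description of $\Diamond[D]$ follows dually.

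For join-density of $\Box[E]$ in $D$, I would fix a clopen downset $U \in D$ and prove the set-theoretic identity $U = \bigcup\{\Box V : V \in E,\ \Box V \subseteq U\}$. This suffices, because each such $\Box V$ lies below $U$ in $D$, and any upper bound of the family in $D$ must contain their union, so the identity forces $U = \bigvee\{\Box V : \Box V \subseteq U\}$ in $D$. To establish it, fix $x \in U$. For every $x' \in U^c$, the fact that $U$ is a downset forces $x' \nleq x$, so Lemma~\ref{lem:Lsep} supplies a clopen $R$-closed set $W_{x'}$ with $x \in W_{x'}$ and $x' \notin W_{x'}$. The complements $W_{x'}^c$ then form an open cover of the compact set $U^c$; extracting a finite subcover and intersecting the corresponding sets $W_{x'}$ yields a clopen $R$-closed set $W$ with $x \in W \subseteq U$, using that finite intersections of $R$-closed sets are $R$-closed. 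By the first paragraph $W = \Box(\Diamond W) \in \Box[E]$, which witnesses $x$ in the right-hand union.

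Meet-density of $\Diamond[D]$ in $E$ is entirely dual. Given $V \in E$ and $y \notin V$, each $v \in V$ satisfies $y \nleq v$ since $V$ is a downset, so the second clause of Lemma~\ref{lem:Lsep} produces a clopen $R$-open set $W_v \ni v$ with $y \notin W_v$. Compactness of $V$ together with closure of $R$-open sets under finite unions then yields a single clopen $R$-open set $W$ with $V \subseteq W$ and $y \notin W$, and $W = \Diamond(\Box W) \in \Diamond[D]$. Letting $y$ range over $V^c$ gives $V = \bigcap\{\Diamond U : V \subseteq \Diamond U\}$, and since $V \in E$ is itself a lower bound contained in every competing one, this is the meet $\bigwedge\{\Diamond U : V \leq \Diamond U\}$ in $E$.

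The step I expect to be most delicate, though routine once the scaffolding above is fixed, is the bookkeeping of the two opposite orderings and of which operator keeps us inside the relevant range: the join-density argument must stay within $\Box[E]$ via finite \emph{intersections} of $R$-closed sets, while the meet-density argument must stay within $\Diamond[D]$ via finite \emph{unions} of $R$-open sets. This is precisely why it is important that Lemma~\ref{lem:Lsep} delivers separators that are $R$-closed (respectively $R$-open) rather than merely clopen, so that the finite-cover step produced by compactness does not leave the collections $\Box[E]$ and $\Diamond[D]$.
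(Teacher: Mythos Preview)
Your proof is correct and follows essentially the same route as the paper: invoke Lemma~\ref{lem:Lsep} to separate $x \in U$ from each $x' \notin U$, then use compactness of $U^c$ to intersect finitely many separators into a clopen $R$-closed set between $x$ and $U$. The only difference is that the paper applies compactness a second time (to $U$ itself) to conclude that every $U \in D$ is in fact a \emph{finite} union of clopen $R$-closed sets, whereas you stop at the arbitrary-union identity; either version suffices for join-density as defined.
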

\begin{proof}
From what was stated in the preliminaries above, it is clear that we get an adjoint pair between the lattices of clopen downsets. We need to show that it is doubly dense. 

To this end, let $U$ be a clopen downset of $X$. We show that $U$ is a finite union of clopen $R$-closed sets. First fix $x \in U$. For any $x' \not\in U$, we have that $x' \nleq x$. By $L$-separation, pick a clopen $R$-closed set $U_{x'}$ such that $x \in U_{x'}$ and $x' \not\in U_{x'}$. Doing this for all $x' \not\in U$, we obtain a cover $\{U_{x'}^c\}_{x' \not\in U}$ by clopen sets of the compact set $U^c$. Therefore, there exists a finite subcover $\{U_i^c\}_{i=1}^n$ of $U^c$. Let us write $V_x := \bigcap_{i=1}^n U_i$. We then get that $x \in V_x \subseteq U$, and $V_x$ is clopen $R$-closed, since each of the $U_i$ is. Doing this for all $x \in U$, we get a cover $\{U_x\}_{x \in X}$ by clopen $R$-closed sets of the compact set $U$, which has a finite subcover. This shows that $U$ is a finite union of clopen $R$-closed sets.

The proof that clopen downsets of $Y$ are finite intersections of clopen $R$-open sets is essentially dual; we leave it to the reader.
\end{proof}

\begin{prop}\label{prop:TSCPdoubledual}
Any totally separated compact polarity is isomorphic to its double dual. 

More precisely, if $(X,Y,R)$ is a TSCP, let $(X',Y',R')$ be the dual polarity of the dual adjoint pair of $(X,Y,R)$. Then there are homeomorphisms $\phi : X \to X'$, $\psi : Y \to Y'$ such that $x \, R \, y$ iff $\phi(x) \, R' \psi(y)$.
\end{prop}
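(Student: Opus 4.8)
The plan is to read this as a double-dual statement for Priestley duality, with the relation $R$ carried along. The first step is to recognize that $(X,\le)$ and $(Y,\le)$, equipped with the $R$-induced orders and their given topologies, are Priestley spaces. By clause (i) of Definition~\ref{dfn:TSCP} the induced quasi-orders are genuine partial orders, and by the first (resp.\ second) item of Lemma~\ref{lem:Lsep}, whenever $x'\nleq x$ there is a clopen $R$-closed set $U$ with $x\in U$ and $x'\notin U$. Since $R$-closed sets are down-sets (as noted in the preliminaries), such $U$ is a clopen down-set, so this is precisely the Priestley total-order-disconnectedness axiom; together with compactness it makes $(X,\le)$ a Priestley space, and dually for $(Y,\le)$. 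In particular, distinct points are separated by clopens, so $X$ and $Y$ are Boolean. Consequently the lattice $D$ of clopen down-sets of $X$ is exactly the Priestley dual of $(X,\le)$, and $E$ is exactly the Priestley dual of $(Y,\le)$.

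Given this, the homeomorphisms come for free from the Priestley representation theorem: the evaluation map $\phi : X\to X'$, $x\mapsto\{U\in D : x\in U\}$, sending a point to the prime filter of clopen down-sets containing it, is an order-homeomorphism onto the Priestley dual space $X'$ of $D$, and $\psi : Y\to Y'$ is defined analogously from $E$. No further argument is needed for $\phi$ and $\psi$ beyond invoking Priestley duality.

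The substantive step is to check that $\phi$ and $\psi$ transport $R$ to $R'$, i.e.\ that $x\,R\,y \iff \phi(x)\,R'\,\psi(y)$. Unwinding the definition of the relation of a dual polarity (with lower adjoint $f=\Diamond$), the condition $\phi(x)\,R'\,\psi(y)$ means $\Diamond[\phi(x)]\subseteq\psi(y)$, that is: for every clopen down-set $U$ of $X$ with $x\in U$, one has $y\in\Diamond U$. For the forward implication, assume $x\,R\,y$; if $x\in U$ then $y\in R[U]=\Diamond U$, so the condition holds. For the converse I would argue contrapositively: if $\neg(x\,R\,y)$, then total $R$-disconnectedness (clause (iii) of Definition~\ref{dfn:TSCP}) supplies clopen $U,V$ with $\Diamond U=V$, $\Box V=U$, $x\in U$ and $y\notin V$; since $U=\Box\Diamond U=\overline{U}$ is $R$-closed it lies in $D$, and $y\notin V=\Diamond U$ witnesses $\Diamond[\phi(x)]\not\subseteq\psi(y)$, i.e.\ $\neg(\phi(x)\,R'\,\psi(y))$.

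The main obstacle I anticipate is bookkeeping rather than depth: one must confirm that the lattice $D$ built from $X$ really is the \emph{full} Priestley dual of $(X,\le)$ (so that Priestley's theorem applies verbatim and $X'$ is genuinely the double dual of $X$), and keep the directions of the two induced orders and of $R$ consistent throughout. Once $(X,\le)$ and $(Y,\le)$ are known to be Priestley spaces, the homeomorphism half is immediate, and the only genuine computation is the relation-matching, whose hard direction is exactly the appeal to total $R$-disconnectedness carried out above.
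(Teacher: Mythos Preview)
Your proposal is correct and follows essentially the same route as the paper: first establish that $(X,\le)$ and $(Y,\le)$ are Priestley spaces via Lemma~\ref{lem:Lsep} and the fact that $R$-closed sets are down-sets, then invoke Priestley duality for the homeomorphisms $\phi,\psi$, and finally verify the relation-matching with the forward direction immediate and the backward direction supplied by total $R$-disconnectedness. The paper's proof is exactly this, with the same contrapositive argument using a clopen $R$-closed witness $U$.
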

\begin{proof}
Note that if $(X,Y,R)$ is a TSCP, then $X$ and $Y$ with the induced orders are Priestley spaces: total-order-disconnectedness follows from Lemma~\ref{lem:Lsep} and the fact, noted above, that $R$-closed and $R$-open sets are downsets in the induced orders.

Therefore, by Priestley duality we have homeomorphisms $\phi : X \to X'$ and $\psi : Y \to Y'$, both given by sending points to their neighbourhood filters of clopen downsets.

It remains to show that $\phi$ and $\psi$ respect the relation $R$. Note that, by definition, we have $x' \, R' \, y'$ iff for any clopen downset $U$ in $x'$, we have that $R[U]$ is in $y'$. Suppose $x \, R \, y$, and that $U \in \phi(x)$. Then $x \in U$, so $y \in R[U]$, so $R[U] \in \psi(y)$. Conversely, suppose that $\neg (x \, R \, y)$. By total $R$-disconnectedness, we pick a clopen $R$-closed set $U$ with $x \in U$ and $y \not\in \Diamond U = R[U]$. This set $U$ is a clopen downset which witnesses that $\neg (\phi(x) \, R' \, \psi(y))$.
\end{proof}
We can extend this object correspondence between daDL's and TSCP's to a dual equivalence of categories. The appropriate morphisms in the category of totally separated compact polarities are pairs of functions $(s_X,s_Y)$, which are the Priestley duals of $(h^\wedge,h^\vee)$. The condition that morphisms in $\mathbf{daDL}$ make two squares commute (see Definition~\ref{dfn:daDL}) dualizes to back-and-forth conditions on $s_X$ and $s_Y$, as in the following definition.
\begin{dfn}
A {\it morphism} in the category $\mathbf{TSCP}$ of totally separated compact polarities from $(X_1,Y_1,R_1)$ to $(X_2,Y_2,R_2)$ is a pair $(s_X,s_Y)$ of continuous functions $s_X : X_1 \to X_2$ and $s_Y : Y_1 \to Y_2$, such that, for all $x \in X_1$, $x' \in X_2$, $y \in Y_1$, $y' \in Y_2$:
\begin{itemize}
\item[(forth)] If $x \,R_1\,y$, then $s_X(x) \,R_2 \, s_Y(y)$,
\item[($\Diamond$-back)] If $x' \, R_2 \, s_Y(y)$, then there exists $z \in X_1$ such that $z \, R_1 \, y$ and $s_X(z) \leq x'$,
\item[($\Box$-back)] If $s_X(x) \,R_2 \, y'$, then there exists $w \in Y_1$ such that $x \, R_1 \, w$ and $y' \leq s_Y(w)$.
\end{itemize}
\end{dfn}
The conditions on these morphisms should look natural to readers who are familiar with back-and-forth conditions in modal logic. More detailed background on how these conditions arise naturally from the theory of canonical extensions can be found in \cite[Section 5]{Geh2012}.

\begin{thm}\label{thm:dualitydaDL}
The category $\mathbf{daDL}$ is dually equivalent to the category $\mathbf{TSCP}$.
\end{thm}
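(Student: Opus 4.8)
The plan is to upgrade the object correspondence established in the three preceding propositions to a full dual equivalence by specifying the two functors on morphisms and checking that they are mutually quasi-inverse. I would define a contravariant functor $\Phi : \mathbf{daDL} \to \mathbf{TSCP}$ sending an object to its dual polarity and a morphism $(h^\wedge,h^\vee) : (D_1,E_1,f_1,g_1) \to (D_2,E_2,f_2,g_2)$ to the pair $(s_X,s_Y)$ of Priestley duals of $h^\wedge$ and $h^\vee$; here $s_X : X_2 \to X_1$ and $s_Y : Y_2 \to Y_1$ act on points (prime filters) by inverse image, so that $h^\wedge(U) = s_X^{-1}(U)$ and $h^\vee(V) = s_Y^{-1}(V)$ on clopen downsets. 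In the other direction I would define $\Psi : \mathbf{TSCP} \to \mathbf{daDL}$ sending a TSCP to its dual adjoint pair and a $\mathbf{TSCP}$-morphism to the pair of inverse-image homomorphisms between the lattices of clopen downsets. The two things that then require proof are: (a) these assignments are well defined on morphisms, i.e.\ $\Phi$ lands in $\mathbf{TSCP}$ and $\Psi$ lands in $\mathbf{daDL}$; and (b) $\Phi$ and $\Psi$ are quasi-inverse, with the required natural isomorphisms $\Psi\Phi \cong \id$ and $\Phi\Psi \cong \id$ supplied on objects by Priestley duality together with Proposition~\ref{prop:TSCPdoubledual}.

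The heart of the argument, and the step I expect to be the main obstacle, is the translation in (a) between the two commuting squares defining a $\mathbf{daDL}$-morphism and the three relational conditions (forth), ($\Diamond$-back), ($\Box$-back) defining a $\mathbf{TSCP}$-morphism. The key dictionary entry is that, under Priestley duality, $f_i$ dualizes to $\Diamond_i = R_i[\,\cdot\,]$ and $g_i$ to $\Box_i$ on clopen downsets, as recorded by the identities $\Diamond\hat{d} = \widehat{f(d)}$ and $\Box\hat{e} = \widehat{g(e)}$ in the proof of the first object proposition above. Consequently the square $h^\vee f_1 = f_2 h^\wedge$ becomes the clopen identity $s_Y^{-1}(R_1[U]) = R_2[s_X^{-1}(U)]$ for every clopen downset $U \subseteq X_1$, and the square $h^\wedge g_1 = g_2 h^\vee$ becomes $s_X^{-1}(\Box_1 V) = \Box_2(s_Y^{-1}(V))$ for every clopen downset $V \subseteq Y_1$. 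I would then show that the first identity holds for all $U$ iff (forth) and ($\Diamond$-back) both hold, and the second holds for all $V$ iff (forth) and ($\Box$-back) both hold. The easy inclusions of each identity follow from (forth) together with the fact that clopen downsets, as well as $R$-closed and $R$-open sets, are downsets in the induced orders; the reverse inclusions are exactly where the two back conditions are used, moving an element down along $\leq$ into such a downset. To pass back from the clopen identities to the pointwise conditions one invokes total $R$-disconnectedness in the form of Lemma~\ref{lem:Lsep}: if a putative instance of (forth) or of a back condition failed, the separating clopen $R$-closed (resp.\ $R$-open) set produced there would violate the relevant clopen identity. This establishes that $\Phi(h^\wedge,h^\vee)$ is a $\mathbf{TSCP}$-morphism and, read in reverse, that $\Psi(s_X,s_Y)$ is a $\mathbf{daDL}$-morphism.

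Finally I would assemble the equivalence. Functoriality of $\Phi$ and $\Psi$, namely preservation of identities and contravariant preservation of composition, is immediate from the corresponding facts for Priestley duality applied in each coordinate. For the object isomorphisms, Proposition~\ref{prop:TSCPdoubledual} gives $\Phi\Psi \cong \id_{\mathbf{TSCP}}$, while for $\Psi\Phi \cong \id_{\mathbf{daDL}}$ one uses that the Priestley representation maps $D \to \mathrm{ClopDown}(X)$, $d \mapsto \hat{d}$, and $E \to \mathrm{ClopDown}(Y)$ are isomorphisms and that, under them, $f$ and $g$ are carried to $\Diamond$ and $\Box$ by the identities $\Diamond\hat{d} = \widehat{f(d)}$ and $\Box\hat{e} = \widehat{g(e)}$; hence $(D,E,f,g)$ is $\mathbf{daDL}$-isomorphic to its double dual. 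Naturality of both families reduces to naturality of the Priestley unit and counit, since all the maps involved are componentwise Priestley duals. Together with the morphism bijection from the previous paragraph, this exhibits $\Phi$ and $\Psi$ as mutually quasi-inverse contravariant functors, proving that $\mathbf{daDL}$ and $\mathbf{TSCP}$ are dually equivalent.
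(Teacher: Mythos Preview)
Your proposal is correct and follows precisely the route the paper indicates: its proof points to Proposition~\ref{prop:TSCPdoubledual} for essential surjectivity and then says one may ``check directly'' that taking Priestley duals componentwise yields a bijection on Hom-sets (or else refer to \cite[Section~5]{Geh2012}), which is exactly the translation between the two commuting squares and the (forth)/back conditions that you sketch. Your write-up is in fact more detailed than the paper's; one small caveat is that recovering (forth) from the clopen identities uses the full total $R$-disconnectedness of Definition~\ref{dfn:TSCP}(iii) rather than Lemma~\ref{lem:Lsep}, and recovering the back conditions tacitly requires a compactness argument alongside that lemma.
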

\begin{proof}
The hardest part of this theorem is the essential surjectivity of the functor which assigns to a daDL its dual polarity. We proved this in Proposition~\ref{prop:TSCPdoubledual}.  One may then either check directly that the assignment which sends a $\mathbf{daDL}$-morphism $(h^\wedge,h^\vee)$ to the pair $(s_X,s_Y)$ of Priestley dual functions between the spaces in the dual polarities is a bijection between the respective $\mathbf{Hom}$-sets, or refer to \cite[Section 5]{Geh2012} for a more conceptual proof using canonical extensions.
\end{proof}

In particular, combining Theorem~\ref{thm:dualitydaDL} with Proposition~\ref{prop:latticesasdaDLs}, the category $\mathbf{L_a}$ of lattices with admissible homomorphisms is dually equivalent to a full subcategory of $\mathbf{TSCP}$. The task that now remains is to identify which totally separated compact polarities may arise as duals of doubly dense adjoint pairs which are isomorphic to ones of the form $(D^\wedge(L),D^\vee(L),u_L,l_L)$ for some lattice $L$ (not all doubly dense adjoint pairs are of this form; cf. Example~\ref{exa:notalldaDLs}).

Given any daDL $(D,E,f,g)$, there is an associated lattice $L=\im(g)\cong \im(f)$ and this lattice embeds in $D$ meet-preservingly and in $E$ join-preservingly. We write $i : L \embeds D$ and $j : L \embeds E$ for the embeddings of $L$ into $D$ and $E$, respectively. These images generate $D$ and $E$, respectively, because of the double denseness. However, the missing property is that $i$ and $j$ need not preserve admissible joins and meets, cf. Example~\ref{exa:notalldaDLs}. We will now give a dual description of this property. 

To do so, we will use the {\it canonical extension} of the adjunction $f : D \leftrightarrows E : g$ and of the embeddings $i$ and $j$. For the definition and the general theory of canonical extensions of maps we refer to \cite[Section 2]{GeJo2004}. All maps in our setting are either join- or meet-preserving, so that they are smooth and the $\sigma$- and $\pi$-extensions coincide. We therefore denote the unique extension of a (join- or meet-preserving) map $h$ by $h^\delta$. Thus, we have maps $f^\delta : D^\delta \leftrightarrows E^\delta : g^\delta$, $i^\delta : L^\delta \to D^\delta$ and $j^\delta : L^\delta \to E^\delta$. For our dual characterization, we will need the following basic fact, which is essentially the content of Remark 5.5 in \cite{GehHar2001}.
\begin{prop}\label{prop:adjlift}
Let $f : D \leftrightarrows E : g$ be an adjunction between distributive lattices and let $L$ be the lattice of Galois-closed elements. Then the following hold:
\begin{enumerate}
\item $f^\delta : D^\delta \leftrightarrows E^\delta : g^\delta$ is an adjunction;
\item The image of $g^\delta$ forms a complete $\bigwedge$-subsemilattice of $D^\delta$ which is isomorphic, as a completion of $L$, to $L^\delta$;
\item The image of $f^\delta$ forms a complete $\bigvee$-subsemilattice of $E^\delta$ which is isomorphic, as a completion of $L$, to $L^\delta$.
\end{enumerate}
\end{prop}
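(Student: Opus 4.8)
The plan is to handle the three parts in order, reducing everything to the interaction between canonical extensions of maps and two structural facts: that the extension of a finite-join-preserving map is completely join-preserving (dually for meets), and that canonical extension is compatible with composition of operators and dual operators (see \cite{GeJo2004}).

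For (i), first note that $f$, being a lower adjoint, preserves all finite joins and $0$, while $g$, being an upper adjoint, preserves all finite meets and $1$. Hence both are smooth, so that $f^\delta = f^\sigma$ and $g^\delta = g^\pi$, and moreover $f^\delta$ preserves arbitrary joins while $g^\delta$ preserves arbitrary meets. Since a monotone map between complete lattices is a lower adjoint precisely when it is completely join-preserving, $f^\delta$ has a unique upper adjoint, and it suffices to identify this adjoint with $g^\delta$. I would do this through the unit and counit: the inequalities $\id_D \le g\circ f$ and $f\circ g \le \id_E$ hold on $D$ and $E$, where $f^\delta, g^\delta$ restrict to $f, g$; using monotonicity of the extension operation on maps together with the composition identities $(g\circ f)^\delta = g^\delta\circ f^\delta$ and $(f\circ g)^\delta = f^\delta\circ g^\delta$, these lift to $\id_{D^\delta}\le g^\delta\circ f^\delta$ and $f^\delta\circ g^\delta\le\id_{E^\delta}$, which, with monotonicity of $f^\delta$ and $g^\delta$, give the adjunction $f^\delta\dashv g^\delta$.

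For (ii), I would exploit that $L$ is a retract of $D$. Writing $c := g\circ f$ for the closure operator on $D$ whose fixpoints are exactly $L=\im(g)$, let $c' : D\to L$ be its corestriction and $i : L\embeds D$ the inclusion; then $c'$ preserves finite joins, $i$ preserves finite meets, $c=i\circ c'$, $c'\circ i=\id_L$, and in fact $c'\dashv i$. By the same argument as in part (i), which does not use distributivity, this reflection lifts to an adjunction $c'^\delta\dashv i^\delta$ with $c'^\delta\circ i^\delta=(c'\circ i)^\delta=\id_{L^\delta}$, so $i^\delta$ is a split monomorphism, hence an order-embedding, and it realizes $L^\delta$ as a completion of $L$ because $i^\delta$ extends $i$. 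To identify its image with $\im(g^\delta)$, note that by (i) the image of the upper adjoint $g^\delta$ is the set of fixpoints of the closure operator $g^\delta\circ f^\delta$; as the image of an upper adjoint it is closed under arbitrary meets and contains the top element, whence it is a complete $\bigwedge$-subsemilattice of $D^\delta$, which is the first assertion of (ii). Finally, from $g\circ f=i\circ c'$ and compatibility of canonical extension with composition we get $g^\delta\circ f^\delta=i^\delta\circ c'^\delta$; since $c'^\delta\circ i^\delta=\id$, the operator $i^\delta\circ c'^\delta$ is idempotent with image $\im(i^\delta)$, so $\im(g^\delta)=\im(i^\delta)\cong L^\delta$ as a completion of $L$. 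Part (iii) is entirely order-dual, with $\im(f^\delta)$ appearing as the fixpoints of the interior operator $f^\delta\circ g^\delta$ and $j^\delta$ playing the role of $i^\delta$.

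I expect the compatibility of canonical extension with composition to be the delicate point: one needs $(g\circ f)^\delta=g^\delta\circ f^\delta$ and $(c'\circ i)^\delta=c'^\delta\circ i^\delta$ for composites of the shape (meet-preserving)$\,\circ\,$(join-preserving) and (join-preserving)$\,\circ\,$(meet-preserving). These are not automatic for arbitrary monotone maps and must be drawn from the composition theory of \cite{GeJo2004}, using that the inner and outer maps are operators and dual operators and that the relevant extensions are completely additive, respectively multiplicative. An alternative, more self-contained route that sidesteps the composition lemmas is to verify directly, via the uniqueness of the canonical extension (Theorem~\ref{thm:canextexun}), that the inclusion $L\embeds\im(g^\delta)$ is dense and compact; here the compactness axiom, transferred through the adjunction from the compactness of $D\embeds D^\delta$, is the part requiring the most care, since joins computed in $\im(g^\delta)$ are the closures of the ambient joins rather than the ambient joins themselves.
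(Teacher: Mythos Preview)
Your proposal is correct in spirit but takes a different route from the paper, and there is one notational pitfall worth flagging.

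The paper's proof is very short: for (i) it simply cites \cite[Proposition~6.6]{GehHar2001}; for (ii) it notes that the image of an upper adjoint between complete lattices is automatically a complete $\bigwedge$-subsemilattice, and then invokes the uniqueness of canonical extensions (Theorem~\ref{thm:canextexun}) by checking that the embedding $L \embeds \im(g^\delta)$ is dense and compact, which it declares routine. This is precisely the ``alternative, more self-contained route'' you describe in your last paragraph, so you have in fact identified the paper's argument, but only as a fallback.

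Your primary approach is more structural: you lift the unit and counit of $f \dashv g$, and for (ii) you factor through the retract $c' \dashv i$ and identify $\im(g^\delta)$ with $\im(i^\delta)$ via the equality $g^\delta f^\delta = i^\delta (c')^\delta$. This buys a conceptually clean picture in which everything is reduced to the functoriality of $(\,\cdot\,)^\delta$ on adjunctions. The cost is exactly what you anticipate: the composition identities. One concrete caution: writing $(g\circ f)^\delta$ and $(f\circ g)^\delta$ presupposes that the closure operator $gf$ and the interior operator $fg$ are smooth, which is not automatic and need not hold. You do not actually need this. For (i) it suffices to check $\id_{D^\delta} \le g^\delta f^\delta$ and $f^\delta g^\delta \le \id_{E^\delta}$ directly: on filter elements $x$ one has $g^\delta f^\delta(x) = \bigwedge_{d \ge x} g f(d) \ge \bigwedge_{d \ge x} d = x$ since $g^\delta$ is completely meet-preserving, and density then gives the inequality everywhere; the counit is dual. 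The same device handles $(c')^\delta i^\delta = \id_{L^\delta}$. With this adjustment your argument goes through, and has the virtue of making explicit why the lifted adjunction exists, whereas the paper outsources this to \cite{GehHar2001}.
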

\begin{proof}
Item (i) is proved in \cite[Proposition 6.6]{GehHar2001}. The image of an upper adjoint between complete lattices always forms a complete $\bigwedge$-subsemilattice. To see that the image of $g^\delta$ is isomorphic to $L^\delta$ as a completion of $L$, it suffices by Theorem~\ref{thm:canextexun} to check that the natural embedding $L \embeds \im(g^\delta)$ (given by the composition $L \embeds D \embeds D^\delta$) is compact and dense. Neither of these properties is hard to verify. The proof of item (iii) is order-dual to (ii).
\end{proof}

Let $M$ be a finite subset of the lattice $L$. Recall that by Lemma~\ref{lem:joinadmcanext}, $M$ is join-admissible if and only if, for each $x\in J^\infty(L^\delta)$, we have $x\leq\bigvee M$ implies $x\leq m$ for some $m\in M$. In order to translate this to a dual condition, it is useful to get a dual characterization of the elements of $J^\infty(L^\delta)$. In the following Lemma, we will use the fact that the relation $R$ can be alternatively defined using the lifted operation $f$: regarding $X$ as $J^\infty(D^\wedge(L)^\delta)$ and $Y$ as $J^\infty(D^\vee(L)^\delta)$, we have that $x \, R \, y \iff y \leq f^\delta(x)$.

\begin{lem}\label{lem:factsaboutLdelta}
Let $(D,E,f,g)$ be a daDL, $(X,Y,R)$ its dual polarity, $L \cong \im(g) \cong \im(f)$, with $i : L \embeds D$ and $j : L \embeds E$ the natural embeddings. Then the following hold:
\begin{enumerate}
\item $J^\infty(D^\delta) \subseteq i^\delta[F(L^\delta)]$ and $M^\infty(E^\delta) \subseteq j^\delta[I(L^\delta)]$.
\item For all $x \in X = J^\infty(D^\delta)$, the following are equivalent:
\begin{enumerate}
\item $x \in i^\delta[J^\infty(L^\delta)]$,
\item $R[x]\neq R[\{x'\in X\mid x'<x\}]$.
\end{enumerate}
\item For all $y \in Y = J^\infty(E^\delta)$, the following are equivalent:
\begin{enumerate}
\item $\kappa(y) \in j^\delta[M^\infty(L^\delta)]$,
\item $R^{-1}[y] \neq R^{-1}[\{y' \in Y \mid y' > y\}]$.
\end{enumerate}
\end{enumerate}
\end{lem}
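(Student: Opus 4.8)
The plan is to prove (i) first, use it as the backbone of (ii), and then deduce (iii) by order-duality. Throughout I will use that, by Proposition~\ref{prop:adjlift}, $i^\delta$ is an order-embedding of $L^\delta$ onto the complete $\bigwedge$-subsemilattice $\im(g^\delta)\subseteq D^\delta$ (so it preserves arbitrary meets), that $j^\delta$ is an order-embedding onto the complete $\bigvee$-subsemilattice $\im(f^\delta)\subseteq E^\delta$ (so it preserves arbitrary joins), and that $f^\delta\dashv g^\delta$, so $f^\delta$ preserves arbitrary joins.

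For part (i), the decisive observation is that, since $D$ is distributive, $J^\infty(D^\delta)$ consists of completely join-\emph{prime} elements. Fix $x\in J^\infty(D^\delta)$; being completely join-irreducible it is a filter element, so $x=\bigwedge\{d\in D\mid x\le d\}$. Given $d\in D$ with $x\le d$, double denseness makes $i[L]$ join-dense in $D$, so $d$ is a finite join $i(a_1)\vee\dots\vee i(a_n)$; complete join-primeness of $x$ yields $x\le i(a_k)$ for some $k$, with $i(a_k)\le d$. Hence $x=\bigwedge\{i(a)\mid a\in L,\ x\le i(a)\}$. Setting $p:=\bigwedge\{a\in L\mid x\le i(a)\}\in F(L^\delta)$ and using that $i^\delta$ preserves this meet, we get $i^\delta(p)=x$, so $x\in i^\delta[F(L^\delta)]$. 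The inclusion $M^\infty(E^\delta)\subseteq j^\delta[I(L^\delta)]$ is entirely order-dual, using that $f[D]$ is meet-dense in $E$ and that $M^\infty(E^\delta)$ consists of completely meet-prime elements.

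For part (ii), I would rewrite both conditions in terms of $f^\delta$. By (i) write $x=i^\delta(p)$ with $p\in F(L^\delta)$; since $i^\delta$ is an order-embedding, condition (a) is equivalent to $p\in J^\infty(L^\delta)$, i.e.\ to $p>p_*$, where $p_*:=\bigvee\{w\in J^\infty(L^\delta)\mid w<p\}=\bigvee\{q\in L^\delta\mid q<p\}$. On the other side, using $x\,R\,y\iff y\le f^\delta(x)$ together with join-preservation of $f^\delta$ and the fact that the points of $Y=J^\infty(E^\delta)$ are completely join-prime, one computes $R[x]={\downarrow}f^\delta(x)\cap Y$ and $R[\{x'\in X\mid x'<x\}]={\downarrow}f^\delta(x_*)\cap Y$, where $x_*:=\bigvee\{x'\in J^\infty(D^\delta)\mid x'<x\}$. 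Since $Y$ join-generates $E^\delta$, condition (b) becomes simply $f^\delta(x)>f^\delta(x_*)$.

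The step I expect to be the main obstacle is the identity $f^\delta(x_*)=j^\delta(p_*)$; the difficulty is precisely that $i^\delta$ need \emph{not} send completely join-irreducibles of $L^\delta$ to completely join-irreducibles of $D^\delta$, so the posets $J^\infty(L^\delta)$ and $J^\infty(D^\delta)$ do not correspond elementwise. I would resolve this by a two-sided decomposition, relying on $f^\delta\circ i^\delta=j^\delta$ (which follows from $f\circ i=j$ and functoriality of the canonical extension of maps \cite{GeJo2004}). For $\le$: each $x'<x$ is $i^\delta(p')$ for a filter element $p'<p$ (by (i) and injectivity of $i^\delta$); decomposing $p'$ as a join of join-irreducibles of $L^\delta$ below it, all of which lie below $p$, gives $f^\delta(x')=j^\delta(p')\le j^\delta(p_*)$, whence $f^\delta(x_*)\le j^\delta(p_*)$. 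For $\ge$: each $w\in J^\infty(L^\delta)$ with $w<p$ has $i^\delta(w)=\bigvee\{x''\in J^\infty(D^\delta)\mid x''\le i^\delta(w)\}$, and every such $x''$ satisfies $x''\le i^\delta(w)<i^\delta(p)=x$, so $j^\delta(w)=f^\delta(i^\delta(w))=\bigvee_{x''}f^\delta(x'')\le f^\delta(x_*)$, whence $j^\delta(p_*)\le f^\delta(x_*)$. Granting this identity, condition (b), namely $f^\delta(x)>f^\delta(x_*)$, reads $j^\delta(p)>j^\delta(p_*)$, i.e.\ $p>p_*$ because $j^\delta$ is an order-embedding, which is exactly condition (a). Finally, (iii) follows by applying (ii) to the order-dual daDL $(E^\op,D^\op,g^\op,f^\op)$, whose dual polarity is $(Y,X,R^{-1})$, and translating back along the canonical order-isomorphism $\kappa$ between $J^\infty(E^\delta)$ and $M^\infty(E^\delta)$, after re-expressing $R^{-1}[y]=\{x\mid y\le f^\delta(x)\}$ through the upper adjoint $g^\delta$.
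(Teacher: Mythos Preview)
Your proof is correct. Part (i) is essentially the paper's argument: both exploit that in the canonical extension of a distributive lattice the completely join-irreducible elements are completely join-\emph{prime} (the paper phrases this via an explicit distributivity-and-choice-function computation, you invoke join-primeness directly), and both conclude by writing $x$ as a meet of elements of the form $i(a)$.

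For part (ii) the two executions differ. The paper stays inside $D^\delta$: from $R[x]=R[\{x'<x\}]$ it derives $f^\delta(x)=\bigvee_{x'<x}f^\delta(x')$, applies the adjunction $f^\delta\dashv g^\delta$ to get $x\le g^\delta f^\delta\bigl(\bigvee_{x'<x}x'\bigr)$, and then identifies the right-hand side with $i^\delta$ applied to a join in $L^\delta$, using that $\im(g^\delta)=i^\delta[L^\delta]$ is a complete $\bigwedge$-subsemilattice. You instead push everything forward into $E^\delta$ via $f^\delta\circ i^\delta=j^\delta$, reducing (b) to $j^\delta(p)>j^\delta(p_*)$ and invoking that $j^\delta$ is an order-embedding. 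Your route is more symmetric and establishes both implications simultaneously through the single identity $f^\delta(x_*)=j^\delta(p_*)$; the paper's argument is somewhat terser on the converse direction.

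One remark: the identity $f^\delta\circ i^\delta=j^\delta$ does hold here, but ``functoriality'' is not a safe justification, since canonical extensions of order-preserving maps do not in general commute with composition. A cleaner argument is that both $f^\delta\circ i^\delta$ and $j^\delta$ agree with $f\circ i=j$ on $L$ and are, by Proposition~\ref{prop:adjlift}, complete-lattice isomorphisms of $L^\delta$ onto $\im(f^\delta)$, hence coincide. Alternatively, your proof only uses the identity on filter elements of $L^\delta$, where it follows directly from compactness: for $p\in F(L^\delta)$, every $d\in D$ with $d\ge i^\delta(p)$ dominates some $i(a)$ with $a\ge p$, so $f^\delta(i^\delta(p))=\bigwedge\{f(d)\mid d\ge i^\delta(p)\}=\bigwedge\{j(a)\mid a\ge p\}=j^\delta(p)$.
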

\begin{proof}
\begin{enumerate}
\item Let $x \in X = J^\infty(D^\delta)$. Then $x \in F(D^\delta)$, so $x$ is equal $\bigwedge F$ for some $F \subseteq D$. For each $d \in F$, since $\im(i) = \im(g)$ is join-dense in $D$, we may pick $S_d \subseteq L$ such that $d = \bigvee i(S_d)$. Let us write $\Phi$ for the set of choice functions $F \to \bigcup_{d \in F} S_d$. Then, by distributivity of $D^\delta$, we have
\[ x = \bigwedge F = \bigwedge \{\bigvee i(S_d) \ | \ d \in F\} = \bigvee \{ \bigwedge_{d \in F} i(\phi(d)) \ | \ \phi \in \Phi\}.\]
Since $x$ is completely join-irreducible in $D^\delta$, we get that $x = \bigwedge_{d \in F} i(\phi(d))$ for some $\phi \in \Phi$. Since $i^\delta$ is completely meet-preserving, we get $x = i^\delta(x')$ where $x' := \bigwedge_{d \in F} \phi(d) \in F(L^\delta)$. The proof that $M^\infty(E^\delta) \subseteq j^\delta[I(L^\delta)]$ is order-dual.
\item For the direction (a) $\Rightarrow$ (b), suppose that $R[x] = R[\{x' \in X \ | \ x' < x\}]$. 
By definition of $R$, we then get that $f^\delta(x) = \bigvee_{x' < x} f^\delta(x')$ holds in $E^\delta$. Since $f^\delta$ is lower adjoint to $g^\delta$ by Proposition~\ref{prop:adjlift}(i), we get that 
\begin{equation} \label{eq:xreduced}
x \leq g^\delta f^\delta \left(\bigvee_{D^\delta} \{x' \in X \ | \ x' < x\}\right). \tag{$\star$}
\end{equation}
By item (i), we have that $X \subseteq i^\delta[L^\delta]$, so the right-hand-side of this inequality is equal to $i^\delta(\bigvee_{L^\delta} \{ v \in (i^\delta)^{-1}(X) \ | \ i^\delta(v) < x\})$. It follows from injectivity of $i^\delta$ that if $x = i^\delta(u)$, then $u \leq \bigvee_{L^\delta} \{ v \in (i^\delta)^{-1}(X) \ | \ i^\delta(v) < x\}$. Then $u$ is actually equal to the join on the right-hand-side, so $u$ is not join-irreducible.

Conversely, if $x \not\in i^\delta[J^\infty(L^\delta)]$, then (\ref{eq:xreduced}) must hold for $x$, from which it follows that $R[x] = R[\{x' \in X \ | \ x' < x\}]$, using adjunction and the definition of $R$ again.
\item Order-dual to item (ii).\qedhere
\end{enumerate}
\end{proof}

Combining item (ii) of this Lemma with the characterization of join-admissibility in Lemma~\ref{lem:joinadmcanext}, we now get the following. A finite set $M \subseteq L$ being join-admissible corresponds to saying that, for each $x\in X$ with $R[x]\neq R[\{x'\in X\mid x'<x\}]$, we have $R[x]\subseteq R\left[\bigcup\{\widehat{m}\mid m\in M\}\right]$ implies $x\in\bigcup\{\widehat{m}\mid m\in M\}$. Note that in its dual incarnation this property does not really depend on $M$ but only on the clopen down-set $\bigcup\{\widehat{m}\mid m\in M\}$. 
Accordingly, we make the following definition.

\begin{dfn}
Let $(X,Y,R)$ be a TSCP, and $U\subseteq X$ a clopen down-set. We say that $U$ is \emph{$R$-regular} provided that, for each $x\in X$ with $R[x]\neq R[\{x'\in X\mid x'<x\}]$, we have $R[x]\subseteq R[U]$ implies $x\in U$. 

Order dually, we say that a down-set $V \subseteq Y$ is \emph{$R$-coregular} provided that, for each $y \in Y$ with $R^{-1}[y] \neq R^{-1}[y' \in Y \mid y' > y]$, we have $R^{-1}[y] \subseteq R^{-1}[U]$ implies $y \in U$.
\end{dfn}
Recall that a clopen down-set $U\subseteq X$ is $R$-closed provided that, for each $x\in X$, we have $R[x]\subseteq R[U]$ implies $x\in U$. Thus it is clear that every $R$-closed clopen down-set in $X$ is $R$-regular. Preserving admissible joins exactly corresponds to the reverse implication: as soon as $U$ is $R$-regular it must also be $R$-closed. To sum up:
\begin{prop}\label{prop:dualcharlattice}
Let $(D,E,f,g)$ be a daDL, and let $(X,Y,R)$ be its dual polarity. Then the following are equivalent:
\begin{enumerate}
\item There exists a lattice $L$ such that $(D,E,f,g) \cong (D^\wedge(L),D^\vee(L),u_L,l_L)$;
\item The embedding $\im(g) \embeds D$ preserves admissible joins and the embedding $\im(f) \embeds E$ preserves admissible meets.
\item In $(X,Y,R)$, all $R$-regular clopen downsets in $X$ are $R$-closed, and all $R$-coregular clopen downsets in $Y$ are $R$-open.
\end{enumerate}
\end{prop}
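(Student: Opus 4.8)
The plan is to prove the two equivalences (i)$\,\Leftrightarrow\,$(ii) and (ii)$\,\Leftrightarrow\,$(iii) separately; the first is essentially formal, while the second carries the genuine content. Throughout I identify $D$ with the lattice of clopen downsets of $X$, and record that the $R$-closed clopen downsets are exactly $\im(\Box)=\im(g)\cong L$, with $\widehat{a}$ the $R$-closed clopen downset coding $a\in L$; also $\im(g)$ is a meet-subsemilattice of $D$ (the image of an upper adjoint), so the inclusion $i\colon L\embeds D$ preserves finite meets, and order-dually $j\colon L\embeds E$ preserves finite joins.

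For (i)$\,\Rightarrow\,$(ii) I would argue directly: under the given isomorphism the embedding $\im(g)\embeds D$ becomes $\eta^\wedge_L[L]\embeds D^\wedge(L)$, i.e.\ $\eta^\wedge_L$ (using $\im(l_L)=\eta^\wedge_L[L]$), which preserves admissible joins by Lemma~\ref{lem:emorphism}; the statement for $\im(f)\embeds E$ is order-dual. For (ii)$\,\Rightarrow\,$(i), note that $i$ is injective with join-dense image by double denseness, preserves finite meets, and preserves admissible joins by hypothesis; Corollary~\ref{cor:univchar} then yields an isomorphism $\hat{i}\colon D^\wedge(L)\to D$, and order-dually $\hat{j}\colon D^\vee(L)\to E$. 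I would finish by checking that $(\hat{i},\hat{j})$ is a $\mathbf{daDL}$-isomorphism: since $D^\wedge(L)$ is join-generated by $\eta^\wedge_L[L]$ and $f,u_L$ are join-preserving, the identity $f\hat{i}=\hat{j}u_L$ need only be verified on a generator $\eta^\wedge_L(a)$, where it reduces to $f(a)=j(a)$; this holds because for a Galois-closed $a=g(f(a))$ the element $f(a)\in\im(f)$ is precisely the copy of $a$ in $E$, and $u_L\eta^\wedge_L=\eta^\vee_L$. The square $g\hat{j}=\hat{i}\,l_L$ is dual.

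For (ii)$\,\Leftrightarrow\,$(iii) I would use the dual dictionary set up just before the definition of $R$-regularity, via three observations. (A) Every clopen downset of $X$ is $\bigcup_{m\in M}\widehat{m}$ for some finite $M\subseteq L$, by join-density of $L$ in $D$ together with compactness of clopen sets. (B) For finite $M\subseteq L$ with $U:=\bigcup_{m\in M}\widehat{m}$, the set $M$ is join-admissible if and only if $U$ is $R$-regular; this is the translation obtained by combining Lemma~\ref{lem:joinadmcanext} with Lemma~\ref{lem:factsaboutLdelta}(ii), which identifies the $x\in X$ arising from $J^\infty(L^\delta)$ as exactly those with $R[x]\neq R[\{x'<x\}]$, and with the identity $x\in\overline{U}\iff R[x]\subseteq R[U]$. (C) For join-admissible $M$, the inclusion $i$ preserves $\bigvee^L M$ if and only if $U$ is $R$-closed, because $i(\bigvee^L M)$ corresponds to $\overline{U}$ while $\bigvee^D i(M)$ corresponds to $U$, so preservation is exactly $\overline{U}=U$. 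Combining (A)--(C), $i$ preserves all admissible joins if and only if every $R$-regular clopen downset of $X$ is $R$-closed; the order-dual argument handles $j$, admissible meets, and $R$-coregular versus $R$-open sets, giving (ii)$\,\Leftrightarrow\,$(iii).

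I expect the main obstacle to be pinning down step (B) rigorously: one must match, on the one hand, the lattice-theoretic closure $g^\delta f^\delta$ in $D^\delta$ (equivalently, joins computed in $\im(g^\delta)\cong L^\delta$) with, on the other hand, the topological closure operator $\overline{(\ \ )}=\Box\Diamond$ on $\mathcal{P}(X)$, and simultaneously track, through $i^\delta$, which completely join-irreducibles of $D^\delta$ are images of those of $L^\delta$. Once this correspondence---``joins in $L$ equal $R$-closure'' together with ``$J^\infty(L^\delta)$-points are exactly the $x$ where $R[x]$ strictly exceeds $R[\{x'<x\}]$''---is established, observations (B) and (C) are immediate and the remainder is bookkeeping.
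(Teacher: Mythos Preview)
Your proposal is correct and follows essentially the same approach as the paper: the equivalence (i)$\Leftrightarrow$(ii) is deduced from Corollary~\ref{cor:univchar} (the paper simply says ``by the results in Section~\ref{sec:envelopes}'', so your explicit verification that $(\hat{i},\hat{j})$ intertwines $(f,g)$ with $(u_L,l_L)$ is more detailed than what the paper writes), and for (ii)$\Leftrightarrow$(iii) your observations (A)--(C) are exactly the ingredients the paper uses, with the same translation via Lemma~\ref{lem:joinadmcanext} and Lemma~\ref{lem:factsaboutLdelta}(ii). The obstacle you anticipate in (B) is real but is handled precisely as you outline, and the paper's argument matches your sketch.
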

\begin{proof}
The equivalence (i) $\iff$ (ii) holds by the results in Section~\ref{sec:envelopes}. 

Throughout the proof of the equivalence (ii) $\iff$ (iii), let us write $L$ for the lattice $\im(g)$, in which meets are given as in $D$ and $\bigvee_L S = gf(\bigvee_M S)$, for any $S \subseteq L$. In this proof, we regard $L$ as a sublattice of $D$, suppressing the notation $i$ for the embedding $L \embeds D$.

For the implication (ii) $\Rightarrow$ (iii), let $U$ be an $R$-regular clopen downset in $X$. Since $\im(g)$ is dense in $D$, there exists $M \subseteq \im(g)$ such that $U = \bigcup_{m \in M} \hat{m}$. We show that $M$ is join-admissible in the lattice $L$, using Lemma~\ref{lem:joinadmcanext}. If $x \in J^\infty(L^\delta)$ and $x \leq \bigvee_L M = gf(\bigvee_D M)$, then $f^\delta(x) \leq f(\bigvee_D M)$. So, by definition of $R$ and the fact that $f$ is completely join-preserving, we get that $R[x] \subseteq R[\bigcup_{m \in M} \hat{m}] = R[U]$. Since $U$ is $R$-regular and $x \in J^\infty(L^\delta)$, we get that $x \in U$, so $x \leq m$ for some $m \in M$. So $M$ is join-admissible, so (ii) implies that $\bigvee_L M = gf(\bigvee_D M) = \bigvee_D M$. That is, $\overline{U} = U$, so $U$ is $R$-closed. The proof that $R$-coregular clopen downsets in $Y$ are $R$-open is dual.

For the implication (iii) $\Rightarrow$ (ii), let $M \subseteq L$ be a join-admissible subset. Let $U := \bigcup_{m \in M} \hat{m} \subseteq X$. Then $U$ is clearly a clopen downset. We show that $U$ is $R$-regular. Let $x \in X$ such that $R[x]\neq R[\{x'\in X\mid x'<x\}]$ and $R[x] \subseteq R[U]$. Then $x \in J^\infty(L^\delta)$ and $f^\delta(x) \leq f(\bigvee_D M)$, so $x \leq gf(\bigvee_D M) = \bigvee_L M$. So, since $M$ is join-admissible, there exists $m \in M$ such that $x \leq m$. In particular, we have $x \in U$, as required. By the assumption (iii), we conclude that $U$ is $R$-closed, i.e., $\overline{U} = U$, so that $\bigvee_L M = gf(\bigvee_D M) = \bigvee_D M$. The proof that $\im(f) \embeds E$ preserves admissible meets is dual.
\end{proof}

In the light of this proposition, we can now define a subcategory of TSCP's which will be dual to the category of lattices with admissible homomorphisms.
\begin{dfn}\label{dfn:tight}
Let $(X,Y,R)$ be a TSCP. We say that $(X,Y,R)$ is {\it tight} if all $R$-regular clopen downsets in $X$ are $R$-closed, and all $R$-coregular clopen downsets in $Y$ are $R$-open. We denote by $\mathbf{tTSCP}$ the full subcategory of $\mathbf{TSCP}$ whose objects are the tight TSCP's.
\end{dfn}

We now obtain our topological duality theorem for lattices with admissible homomorphisms.
\begin{thm}\label{thm:mainduality}
The category $\mathbf{L_a}$ of lattices with admissible homomorphisms is dually equivalent to the category $\mathbf{tTSCP}$ of tight totally separated compact polarities.
\end{thm}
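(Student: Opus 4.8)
The plan is to assemble the theorem from three results that are already in hand: the equivalence of $\mathbf{L_a}$ with a full subcategory of $\mathbf{daDL}$ (Proposition~\ref{prop:latticesasdaDLs}), the dual equivalence $\mathbf{daDL}\simeq\mathbf{TSCP}^{\op}$ (Theorem~\ref{thm:dualitydaDL}), and the object-level characterization of Proposition~\ref{prop:dualcharlattice}. Essentially all of the real work has been done in these statements; what remains is to check that the dual equivalence of Theorem~\ref{thm:dualitydaDL} restricts correctly to the relevant full subcategories on both sides.

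First I would isolate the essential image of the functor $\mathcal{D} : \mathbf{L_a} \to \mathbf{daDL}$ from Proposition~\ref{prop:latticesasdaDLs}. Write $\mathbf{daDL}_0$ for the full subcategory of $\mathbf{daDL}$ whose objects are the daDL's satisfying condition (i) of Proposition~\ref{prop:dualcharlattice}, i.e.\ those isomorphic to $(D^\wedge(L),D^\vee(L),u_L,l_L)$ for some lattice $L$. By definition this is exactly the isomorphism-closure of the image of $\mathcal{D}$ on objects, and since $\mathcal{D}$ is full, faithful, and lands in $\mathbf{daDL}_0$, Proposition~\ref{prop:latticesasdaDLs} already gives an equivalence $\mathbf{L_a}\simeq\mathbf{daDL}_0$.

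Next I would restrict the dual equivalence of Theorem~\ref{thm:dualitydaDL} to $\mathbf{daDL}_0$. Let $\Phi$ denote the contravariant functor sending a daDL to its dual polarity. The key observation, supplied by Proposition~\ref{prop:dualcharlattice}, is that a daDL lies in $\mathbf{daDL}_0$ (condition (i)) if and only if its dual polarity is tight (condition (iii)); that is, $\Phi$ carries $\mathbf{daDL}_0$ into $\mathbf{tTSCP}$. For essential surjectivity onto $\mathbf{tTSCP}$, take any tight TSCP $(X,Y,R)$; by the essential surjectivity of $\Phi$ (Proposition~\ref{prop:TSCPdoubledual}) it is isomorphic to the dual polarity of some daDL $(D,E,f,g)$, and since tightness is invariant under the isomorphisms of $\mathbf{TSCP}$, the dual polarity of $(D,E,f,g)$ is tight, so $(D,E,f,g)$ satisfies (iii) and hence (i), i.e.\ it lies in $\mathbf{daDL}_0$. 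Because $\mathbf{daDL}_0\subseteq\mathbf{daDL}$ and $\mathbf{tTSCP}\subseteq\mathbf{TSCP}$ are full subcategories and $\Phi$ is full and faithful, the restriction $\Phi|_{\mathbf{daDL}_0}$ is automatically full and faithful onto $\mathbf{tTSCP}$. Hence $\Phi$ restricts to a dual equivalence $\mathbf{daDL}_0\simeq\mathbf{tTSCP}^{\op}$.

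Composing the two equivalences yields $\mathbf{L_a}\simeq\mathbf{daDL}_0\simeq\mathbf{tTSCP}^{\op}$, which is precisely the asserted dual equivalence. The one point requiring care, rather than a genuine obstacle, is this restriction-of-equivalence step: one must verify that the full subcategories $\mathbf{daDL}_0$ and $\mathbf{tTSCP}$ are matched object-for-object under $\Phi$ up to isomorphism, which is exactly the content of the equivalence (i)$\iff$(iii) in Proposition~\ref{prop:dualcharlattice} together with the isomorphism-invariance of tightness. Once this object-matching is established, fullness and faithfulness of the restricted functor transfer for free from Theorem~\ref{thm:dualitydaDL}, and no further computation is needed.
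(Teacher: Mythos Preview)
Your proposal is correct and follows essentially the same approach as the paper's own proof: invoke Proposition~\ref{prop:latticesasdaDLs}, Theorem~\ref{thm:dualitydaDL}, and Proposition~\ref{prop:dualcharlattice} in turn, then compose. Your version is simply more explicit about the restriction-of-equivalence step (naming the intermediate full subcategory $\mathbf{daDL}_0$ and checking isomorphism-invariance of tightness), which the paper leaves implicit.
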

\begin{proof}
By Proposition~\ref{prop:latticesasdaDLs}, we have that $\mathbf{L_a}$ is equivalent to a full subcategory of $\mathbf{daDL}$. By Theorem~\ref{thm:dualitydaDL}, the category $\mathbf{daDL}$ is dually equivalent to $\mathbf{TSCP}$. By Proposition~\ref{prop:dualcharlattice}, the image of $\mathbf{L_a}$ in $\mathbf{daDL}$ under this dual equivalence is $\mathbf{tTSCP}$.
\end{proof}
The above theorem is not as general as possible: although we have only developed a duality for $\mathbf{L_a}$ here, it should be possible to generalize this duality to the categories $\Lwav$ and $\Lvaw$. To do so, one would need to generalize the category $\mathbf{tTSCP}$ to one where the morphisms are single functions instead of pairs of functions. We leave this to future work.

In this section, in light of Examples~\ref{exa:notsober}~and~\ref{exa:notspectral}, we set out to obtain a topological duality for lattices in which the spaces are nicer than those occurring in Hartung's duality. 
Although the spaces obtained in our duality are as nice as can be (they are compact, Hausdorff and totally disconnected), this comes at the price of a rather complicated characterization. Therefore, we are inclined to draw as a negative conclusion that topology may not be the most opportune language to discuss `duality' for lattices (unless the definition of a tTSCP can be simplified). Fortunately, the perspective of canonical extensions provides an alternative to topology: we have explained above how canonical extensions can be viewed as a point-free version of Hartung's duality, and we have used them to reason about the topological dual spaces introduced in this paper. In the next section, we will propose quasi-uniform spaces as a useful ``spatial'' alternative to topology in the context of set-theoretic representations of lattices.


\section{Quasi-uniform spaces associated with a lattice}\label{sec:uniform}
In this section we will show that the distributive envelopes of a lattice, which were defined by a universal property in Section~\ref{sec:envelopes}, are also natural from a generalized topological perspective. The appropriate framework is that of quasi-uniform spaces, which generalize both quasi-orders and topologies (see \cite{FletcherLindgren82}, in particular Chapter 3, for background on the theory of quasi-uniform spaces used in this section). In this section we will associate two Pervin quasi-uniform spaces to a lattice $L$, and then show in Theorem~\ref{thm:unifdual} that the completions of these quasi-uniform spaces coincide with the dual spaces of the distributive envelopes of $L$. Thus, quasi-uniform spaces give a precise spatial meaning to the distributive envelopes of $L$. Note that Pervin spaces, uniform completions and compactifications have already been used by Ern\'e and Palko \cite{Ern2001, ErnPal1998} to obtain order-theoretic ideal completions.

Given a set $X$, we denote, for each subset $A\subseteq X$, by $U_A$ the subset 
\[
(A^c\times X)\cup(X\times A)=\{(x,y)\mid x\in A\implies y\in A\}
\]
of $X\times X$. Given a topology $\tau$ on $X$, the filter $\mathcal U_\tau$ in the power set of $X\times X$ generated by the sets $U_A$ for $A\in\tau$ is a totally bounded transitive quasi-uniformity on $X$ \cite[Proposition 2.1]{FletcherLindgren82}. The quasi-uniform spaces $(X,\mathcal U_\tau)$ were first introduced by Pervin \cite{Pervin62} and are now known in the literature as Pervin spaces. 
Generalizing this idea (also see \cite{Csa1993}), given any subcollection $\mathcal K\subseteq \mathcal P(X)$, we define $(X,\mathcal U_{\mathcal K})$ to be the quasi-uniform space whose quasi-uniformity is the filter generated by the entourages $U_{A}$ for $A\in \mathcal K$. Here we will call this larger class of quasi-uniform spaces {\it Pervin spaces}. 

The first crucial point is that, for any collection $\mathcal K\subseteq \mathcal P(X)$, the bounded distributive sublattice $D(\mathcal K)$ of $\mathcal P(X)$ generated by $\mathcal K$ may be recovered from $(X,\mathcal U_{\mathcal K})$, even though this cannot be done in general from the associated topology. The {\it blocks} of a space $(X,\mathcal U)$ are the subsets $A\subseteq X$ such that $U_A$ is an entourage of the space, or equivalently, those for which the characteristic function $\chi_A:X\to 2$ is uniformly continuous with respect to the Sierpi\'nski quasi-uniformity on $2$, which is the one containing just $2^2$ and $\{(0,0),(1,1),(1,0)\}$. The following fact is well-known, but we give a proof since it does not seem to be readily available in the literature.
\begin{thm}\label{thm:pervinblocks}
Let $X$ be a set and $\mathcal{K} \subseteq \Po(X)$ a collection of subsets. The blocks of the quasi-uniform space $(X,\mathcal U_{\mathcal K})$ are exactly the elements of the sublattice $D(\mathcal{K})$ of $\Po(X)$ generated by $\mathcal{K}$.
\end{thm}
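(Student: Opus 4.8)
The plan is to prove the two inclusions of the claimed equality separately, after recording the single pointwise fact that drives everything: unravelling the definition, $(x,y)\in U_A$ holds precisely when $\chi_A(x)\le\chi_A(y)$, where $2=\{0,1\}$ is ordered by $0<1$. This reformulation converts all reasoning about entourages $U_A$ into reasoning about the characteristic functions $\chi_A$, and the whole proof rests on it.

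First I would show that $D(\mathcal K)$ is contained in the set of blocks. Each generator $A\in\mathcal K$ is a block because $U_A$ lies in the generating set of the filter $\mathcal U_{\mathcal K}$, and $\emptyset$ and $X$ are blocks because $U_\emptyset=U_X=X\times X$ is the top of $\mathcal U_{\mathcal K}$. It then suffices to check that the blocks are closed under $\cup$ and $\cap$. The key computation is that $\chi_{A\cup B}=\max(\chi_A,\chi_B)$ and $\chi_{A\cap B}=\min(\chi_A,\chi_B)$ are monotone whenever both $\chi_A$ and $\chi_B$ are, which amounts to the inclusions $U_A\cap U_B\subseteq U_{A\cup B}$ and $U_A\cap U_B\subseteq U_{A\cap B}$. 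Since $\mathcal U_{\mathcal K}$ is a filter, $U_A,U_B\in\mathcal U_{\mathcal K}$ forces $U_A\cap U_B\in\mathcal U_{\mathcal K}$, and upward closure then puts $U_{A\cup B}$ and $U_{A\cap B}$ in $\mathcal U_{\mathcal K}$ as well. Hence the blocks form a bounded sublattice of $\Po(X)$ containing $\mathcal K$, so they contain $D(\mathcal K)$.

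The reverse inclusion is the substantial direction. If $A$ is a block, then by the description of a generated filter there are $B_1,\dots,B_n\in\mathcal K$ with $\bigcap_{i=1}^n U_{B_i}\subseteq U_A$. I would introduce the colouring $\sigma:X\to 2^n$ given by $\sigma(x)=(\chi_{B_1}(x),\dots,\chi_{B_n}(x))$, so that $(x,y)\in\bigcap_i U_{B_i}$ iff $\sigma(x)\le\sigma(y)$ in the product order. The containment $\bigcap_i U_{B_i}\subseteq U_A$ then says exactly that $\chi_A$ is monotone along $\sigma$: $\sigma(x)\le\sigma(y)$ implies $\chi_A(x)\le\chi_A(y)$. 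Applying this in both directions whenever $\sigma(x)=\sigma(y)$ shows $\chi_A$ factors through $\sigma$, so $A=\sigma^{-1}(S)$ with $S:=\sigma[A]\subseteq 2^n$, and monotonicity makes $S$ upward closed among the colours actually attained by $\sigma$.

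The final and most delicate point is to express $A$ using only $\cup$ and $\cap$ of the $B_i$, and in particular without complementation, since $D(\mathcal K)$ need not be closed under complements. The identity that unlocks this is $\bigcap_{i:s_i=1}B_i=\{x:\sigma(x)\ge s\}=\sigma^{-1}({\uparrow}s)$ for each colour $s\in 2^n$ (reading an empty intersection as $X$). Using that $S$ is up-closed among attained colours, each set $\sigma^{-1}({\uparrow}s)$ with $s\in S$ lies inside $A$, while every point of $A$ sits above some minimal colour of $S$; taking the finite set $\min S$ therefore yields $A=\bigcup_{s\in\min S}\bigcap_{i:s_i=1}B_i$, a finite union of finite intersections of members of $\mathcal K$, hence an element of $D(\mathcal K)$. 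The degenerate cases are handled by the bounds of the lattice: $A=\emptyset$ gives $S=\emptyset$ and an empty union, while $\bar{0}\in S$ forces $A=X$, realised as the empty intersection. I expect this positive combinatorial reconstruction — recognising the block as an up-set of colours and rebuilding it without complements — to be the main obstacle, the remaining arguments being routine filter manipulations.
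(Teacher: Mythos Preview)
Your proof is correct and follows essentially the same approach as the paper. The paper's argument for the reverse inclusion is stated tersely as the identity $A = \bigcup\{\bigcap\{B \mid x \in B,\, B \in \mathcal{F}\} \mid x \in A\}$; your colouring $\sigma$ and the observation that $S=\sigma[A]$ is up-closed among attained colours is precisely a detailed justification of this formula, since $\bigcap\{B_i \mid x \in B_i\} = \sigma^{-1}({\uparrow}\sigma(x))$, so your union over $\min S$ and the paper's union over $x\in A$ describe the same set.
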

\begin{proof}
The blocks of any quasi-uniform space form a lattice, since $U_A \cap U_B$ is contained in both $U_{A \cap B}$ and $U_{A \cup B}$, for any $A, B \subseteq X$. If $A$ is a block of $\mathcal{U}_{\mathcal{K}}$, then by definition $U_A$ contains a set of the form $\bigcap_{B \in \mathcal{F}} U_B$, where $\mathcal{F} \subseteq \mathcal{K}$ is finite. From $\bigcap_{B \in \mathcal{F}} U_B \subseteq U_A$, it follows easily that $A = \bigcup \{ \bigcap \{ B \ | \ x \in B, B \in \mathcal{F} \} \ | \ x \in A\}$ (cf. \cite[Lemma 2]{Csa1993}). 
\end{proof}

Further, it is not hard to see that if $\mathcal D\subseteq \mathcal P(Y)$ and $\mathcal E\subseteq \mathcal P(X)$ are bounded sublattices of the respective power sets, then a map $f:(X,\mathcal U_{\mathcal E})\to(Y,\mathcal U_{\mathcal D})$ is uniformly continuous if and only if $f^{-1}$ induces a lattice homomorphism from $\mathcal D$ to $\mathcal E$ by restriction. Thus, the category of sublattices of power sets with morphisms that are commuting diagrams

\vspace{-1em}

\begin{center}
\begin{tikzpicture}[scale=0.8]
\matrix (m) [matrix of math nodes, row sep=3em, column sep=2.5em, text height=1.5ex, text depth=0.25ex] 
{ \mathcal{D} & \mathcal{E} \\
\Po(Y) &  \Po(X) \\};
\path[->] (m-1-1) edge node[above] {$h$} (m-1-2);
\path[->] (m-2-1) edge node[above] {$\phi$} (m-2-2);
\path[>->] (m-1-1) edge (m-2-1);
\path[>->] (m-1-2) edge (m-2-2);
\end{tikzpicture}
\end{center}

\vspace{-1em}

where $\phi$ is a complete lattice homomorphism, is dually isomorphic to the category of Pervin spaces with uniformly continuous maps.

To be able to state the main result from \cite{GGP2010} that we want to apply here, we need to recall the definition of {\it bicompletion} of a quasi-uniform space. For more details see \cite[Chapter 3]{FletcherLindgren82}. Bicompleteness  generalizes the notion of completeness for uniform spaces, which is well-understood (see, e.g., \cite[Chapter II.3]{Bourbaki1}): a uniform space $(X,\mathcal{U})$ is {\it complete} if every Cauchy filter converges. 
Now let $(X,\mathcal U)$ be a quasi-uniform space. A quasi-uniform space $(X,\mathcal U)$ is called {\it bicomplete} if and only if its symmetrization $(X,\mathcal U^s)$ is a complete uniform space. Here, recall that the {\it symmetrization}, $\mathcal U^s$, of the quasi-uniformity $\mathcal{U}$ is defined as the filter of $\Po(X \times X)$ generated by the union of  $\mathcal U$ and $\mathcal U^{-1}$. 
It has been shown by Fletcher and Lindgren \cite[Chapter 3.3]{FletcherLindgren82} that the full subcategory of bicomplete quasi-uniform spaces forms a reflective subcategory of the category of quasi-uniform spaces with uniformly continuous maps. Thus, for each quasi-uniform space $(X,\mathcal U)$, there is a bicomplete quasi-uniform space $(\widetilde{X},\widetilde{\mathcal U})$ and a uniformly continuous map $\eta_X:(X,\mathcal U)\to(\widetilde{X},\widetilde{\mathcal U})$ with an appropriate universal property.

Now we are ready to state the main result of Section 1 of \cite{GGP2010}: 
\begin{thm}\cite[Theorem 1.6]{GGP2010}\label{thm:completion}
Let $D$ be a bounded distributive lattice, and let $e:D\hookrightarrow \mathcal P(X)$ be any bounded lattice embedding of $D$ in a power set lattice and denote by $\mathcal D$ the image of the embedding $e$. Let $\widetilde{X}$ be the bicompletion of the Pervin space $(X,\mathcal U_{\mathcal D})$. Then $\widetilde{X}$ with the induced topology is the Stone dual space of $D$.
\end{thm}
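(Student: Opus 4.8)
The plan is to construct a homeomorphism between the bicompletion $\widetilde{X}$, carrying its induced (forward) quasi-uniform topology, and the Stone dual space $X_D$ of prime filters of $D$, whose basic opens are the sets $\hat{a} = \{p : a \in p\}$. Throughout I identify $D$ with its image $\mD \subseteq \Po(X)$ under $e$, so that each $a \in D$ is literally a subset $A \in \mD$ of $X$; by Theorem~\ref{thm:pervinblocks} the blocks of $(X,\mU_{\mD})$ are exactly the elements of $\mD$. The first step is to compute the symmetrization: since $U_A \cap U_A^{-1} = (A \times A) \cup (A^c \times A^c)$ is the equivalence relation ``$x,y$ lie on the same side of $A$'', the uniformity $\mU_{\mD}^s$ is generated by these relations for $A \in \mD$, and finite intersections yield finite partitions of $X$. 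Hence $(X,\mU_{\mD}^s)$ is a \emph{totally bounded} uniform space, so the underlying set of its bicompletion is compact, matching compactness of the Stone dual.

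Next I would identify the points of $\widetilde{X}$ with prime filters of $D$. A Cauchy filter $\mF$ of $(X,\mU_{\mD}^s)$ must, for each $A \in \mD$, contain a member lying entirely inside $A$ or entirely inside $A^c$; recording this choice defines $\chi_{\mF} : \mD \to 2$, and using that $\mF$ is a proper filter one checks $\chi_{\mF}$ preserves $0,1,\wedge,\vee$, i.e. is a prime filter $p$ of $D$. Conversely a prime filter $p$ generates a filter on $X$ from $\{A : A \in p\} \cup \{A^c : A \notin p\}$; this family has the finite intersection property precisely because $e$ is an order-embedding and $p$ is prime: if $a_1 \wedge \dots \wedge a_n \leq b_1 \vee \dots \vee b_m$ with all $a_i \in p$ and all $b_j \notin p$, then $b_1 \vee \dots \vee b_m \in p$, and primeness forces some $b_j \in p$, a contradiction. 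After passing to minimal (round) Cauchy filters these assignments are mutually inverse, giving a bijection between the points of $\widetilde{X}$ and $X_D$.

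Finally I would match the topologies. In the Pervin space the forward neighbourhood filter of $x$ is generated by $\{A \in \mD : x \in A\}$ (since $U_A[x]$ is $A$ when $x \in A$ and $X$ otherwise), so each block $A$ is a subbasic open that extends in the bicompletion to a set $\widetilde{A}$; under the bijection of the previous step $\widetilde{A}$ corresponds exactly to $\hat{a}$. As $\{\hat{a} : a \in D\}$ is the standard basis of the spectral topology on $X_D$, the induced topology on $\widetilde{X}$ is precisely the Stone topology, which completes the argument.

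I expect the main obstacle to be the careful treatment of the bicompletion itself: pinning down the correct notion of point (minimal or round Cauchy filter) so that the bijection with prime filters is clean, and --- crucially --- distinguishing the forward induced topology, which yields the \emph{spectral} Stone dual space, from the coarser-looking compact Hausdorff topology of the symmetrization $\mU_{\mD}^s$, which yields only the patch (Priestley) space. Verifying that the forward topology survives bicompletion as the spectral topology, rather than collapsing into the patch topology, is the delicate point of the proof.
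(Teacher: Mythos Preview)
The paper does not contain a proof of this theorem: it is quoted verbatim as \cite[Theorem 1.6]{GGP2010} and used as a black box, with no proof environment following the statement. There is therefore nothing in the paper to compare your proposal against.

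That said, your outline is a reasonable direct argument for the cited result. The identification of minimal Cauchy filters of $(X,\mathcal{U}_{\mathcal{D}}^s)$ with prime filters of $D$ is the heart of the matter, and your two constructions are the right ones; the finite-intersection-property argument correctly uses that $e$ is an embedding (so set-theoretic containment reflects the order of $D$) together with primeness. Your closing caveat is also well placed: the genuinely delicate step is showing that the \emph{forward} quasi-uniform topology on the bicompletion, generated by the extended entourages $\widetilde{U_A}$, has exactly the sets $\widetilde{A}$ as a basis of opens and hence agrees with the spectral topology on $X_D$, rather than with the patch topology coming from the symmetrization. You have identified this but not carried it out; in a full proof you would need to verify that the forward entourages $U_A$ survive bicompletion so that $\widetilde{U_A}[p] = \widetilde{A}$ when $A \in p$ and $\widetilde{X}$ otherwise.
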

Alternatively, one can think of the quasi-uniform space $(\widetilde{X}, \widetilde{\mathcal{U}}_{\mathcal{D}})$ as an ordered uniform space, as follows. Equip the uniform space $(\widetilde{X},\widetilde{\mathcal U^s_{\mathcal D}})$ with the order $\leq$ defined by $\bigcap_{a\in {\mathcal D}}U_{\widehat{a}}$. Then $(\widetilde{X},\widetilde{\mathcal U^s_{\mathcal D}},\leq)$ is a uniform version of the {\it Priestley dual space} of $D$.

We now want to apply this theorem to the setting of this paper. Let $L$ be a bounded lattice, $L^\delta$, the canonical extension of $L$, and $X_L=J^\infty(L^\delta)$ and $Y_L=M^\infty(L^\delta)$. Then $L$ induces quasi-uniform space structures $(X_L,\mathcal U_{\hat{L}})$ and $(Y_L,\mathcal U_{\check{L}})$ on $X_L$ and $Y_L$, respectively. Here $\mathcal U_{\hat{L}}$ is the Pervin quasi-uniformity generated by the image $\hat{L}=\{\hat{a}\mid a\in L\}$ and $\mathcal U_{\check{L}}$ is the Pervin quasi-uniformity generated by the image $\check{L}=\{\check{a}\mid a\in L\}$. By Theorem~\ref{thm:completion}, the bicompletions of these Pervin spaces are Stone spaces and the corresponding bounded distributive lattices are the sublattices of $\mathcal P(X_L)$ and $\mathcal P(Y_L)$ generated by $\hat{L}$ and $\check{L}$, respectively.

The following theorem now follows by combining Proposition~\ref{prop:isodwedge}, Theorem~\ref{thm:envelopeexists} and Theorem~\ref{thm:completion}.

\begin{thm}\label{thm:unifdual}
Let $L$ be a lattice. The bicompletion of the associated quasi-uniform Pervin space, $(X_L,\mathcal U_{\hat{L}})$, is the dual space of the distributive $\wedge$-envelope, $D^\wedge(L)$, of $L$. Order dually, the bicompletion of the quasi-uniform Pervin space $(Y_L,\mathcal U_{\check{L}})$ is the dual space of the distributive $\vee$-envelope, $D^\vee(L)$, of $L$.
\end{thm}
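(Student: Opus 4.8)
The plan is to recognize the statement as an essentially immediate consequence of three results already established: the concrete description of $D^\wedge(L)$ as a sublattice of $\Po(X_L)$ (Proposition~\ref{prop:isodwedge}), the identification of the blocks of a Pervin space with the generated sublattice (Theorem~\ref{thm:pervinblocks}), and the bicompletion theorem of \cite{GGP2010} (Theorem~\ref{thm:completion}). The only genuine work is to check that the quasi-uniformity $\mathcal{U}_{\hat{L}}$, generated merely by the subbasis $\hat{L}=\{\hat{a}\mid a\in L\}$, coincides with the quasi-uniformity generated by the whole distributive sublattice it determines, so that Theorem~\ref{thm:completion} applies verbatim.

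First I would fix $X_L = J^\infty(L^\delta)$ and let $\mathcal{D} \subseteq \Po(X_L)$ be the sublattice generated by $\hat{L}$. By Proposition~\ref{prop:isodwedge}, the map $\phi$ sending a finitely generated a-ideal $\gen{T}_{ai}$ to $\bigcup_{a \in T} \hat{a}$ is a lattice isomorphism $\phi : D^\wedge(L) \to \mathcal{D}$; composing with the inclusion $\mathcal{D} \hookrightarrow \Po(X_L)$ gives a bounded lattice embedding $e : D^\wedge(L) \hookrightarrow \Po(X_L)$ with image $\mathcal{D}$, which is exactly the data required as input to Theorem~\ref{thm:completion}.

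Next I would verify that $\mathcal{U}_{\hat{L}} = \mathcal{U}_{\mathcal{D}}$. The inclusion $\mathcal{U}_{\hat{L}} \subseteq \mathcal{U}_{\mathcal{D}}$ is immediate from $\hat{L} \subseteq \mathcal{D}$. For the converse, every $A \in \mathcal{D}$ is a finite join of finite meets of sets of the form $\hat{a}$; using the two containments $U_{A \cap B} \supseteq U_A \cap U_B$ and $U_{A \cup B} \supseteq U_A \cap U_B$ already noted in the proof of Theorem~\ref{thm:pervinblocks}, each entourage $U_A$ contains a finite intersection of entourages $U_{\hat{a}}$ and hence lies in the filter $\mathcal{U}_{\hat{L}}$. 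Thus the two filters coincide. Equivalently, by Theorem~\ref{thm:pervinblocks} the blocks of $(X_L,\mathcal{U}_{\hat{L}})$ are precisely the elements of $\mathcal{D}$, which is the same information that $\mathcal{U}_{\mathcal{D}}$ records.

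With this identification in hand, Theorem~\ref{thm:completion} applied to the lattice $D^\wedge(L)$ and the embedding $e$ yields that the bicompletion of $(X_L,\mathcal{U}_{\mathcal{D}}) = (X_L,\mathcal{U}_{\hat{L}})$, equipped with its induced topology, is the Stone dual space of $D^\wedge(L)$, which is the first assertion. The order-dual statement for $(Y_L,\mathcal{U}_{\check{L}})$ and $D^\vee(L)$ follows by running the same argument with $Y_L = M^\infty(L^\delta)$ and the sets $\check{a}$, invoking the anti-isomorphism between $D^\vee(L)$ and the sublattice of $\Po(M^\infty(L^\delta))$ generated by $\{\check{a} \mid a \in L\}$ recorded in Remark~\ref{rem:orderdual}. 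I expect the only point requiring any care --- the main, if modest, obstacle --- to be this bookkeeping: the subbasis-versus-sublattice coincidence of quasi-uniformities, together with the order-reversal in the $\vee$-case, which matches the passage between a distributive lattice and its order dual. All the underlying spatial content is already supplied by the cited theorems.
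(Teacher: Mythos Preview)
Your proposal is correct and follows essentially the same approach as the paper, which simply states that the theorem follows by combining Proposition~\ref{prop:isodwedge}, Theorem~\ref{thm:envelopeexists} and Theorem~\ref{thm:completion}. You have merely made explicit the one detail the paper leaves to the reader, namely that the Pervin quasi-uniformity generated by the subbasis $\hat{L}$ coincides with the one generated by the full sublattice $\mathcal{D}$, so that Theorem~\ref{thm:completion} applies directly.
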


\begin{exa}
For any finite lattice $L$, the distributive envelope $D^\wedge(L)$ is the lattice of downsets of the poset $J(L)$, with the order inherited from $L$. Thus, in the finite case, the quasi-uniform space $X_L$ is already bicomplete, and hence equal to its own bicompletion. The same of course holds for $D^\vee(L)$ and $Y_L$. In the finite case, $X_L$ and $Y_L$ are just the spaces occurring in Hartung's duality.

For the lattice $L$ discussed in Example~\ref{exa:notsober} above, the distributive envelope $D^\wedge(L)$ is (isomorphic to) the lattice consisting of all finite subsets of the countable antichain, and a top element. Thus, in the bicompletion of $X_L$, we find one new point, corresponding to the prime filter consisting of only the top element.

For the lattice $K$ discussed in Example~\ref{exa:notspectral}, the distributive envelope $D^\wedge(K)$ is a much bigger lattice than $K$, and the bicompletion of $X_L$ will contain many new points. In particular, the bicompletion will not just be the soberification of $X_L$.
\end{exa}


\section*{Conclusion}

In this paper, we developed the theory of distributive envelopes and used it to obtain a topological duality for lattices. We see our methodology as an example of the phenomenon that canonical extensions and duality may help to study lattice-based algebras, even when they do not lie in finitely generated varieties. As a case in point, the construction of the distributive envelopes in Section~\ref{sec:envelopes} made use of canonical extensions of lattices as a key tool. Moreover, the work in that section enabled us to identify the $(\wedge,a\vee)$-morphisms between lattices, which are exactly the ones which have functional duals on the $X$-components of the dual spaces defined in Section~\ref{sec:duality}. We believe that canonical extensions may be used in a similar way for other varieties of algebras based on lattices, such as residuated lattices, to mention just one example.

In Section~\ref{sec:uniform}, we provided an alternative view of set-representation of lattices, which replaces topology by quasi-uniformity and completion. Theorem~\ref{thm:unifdual} opens the way for obtaining an alternative duality for lattices, in which quasi-uniform spaces take the place of topological spaces. To do so, an interesting first step would be to represent the adjunction $D^\wedge(L) \leftrightarrows D^\vee(L)$ as additional structure on the pair of quasi-uniform spaces. We leave the development of these ideas to further research.

Let us mention one more possible direction for further work. For distributive lattices, the canonical extension functor is left adjoint to the inclusion functor of perfect distributive lattices into distributive lattices. However, this is known to be true for lattice-based algebras only in case all basic operations are both Scott and dually Scott continuous (see \cite[Proposition~C.9, p. 196]{Coumans12} for a proof in the distributive setting). It follows from the results in Goldblatt \cite{Goldblatt2006} that the canonical extension functor for modal algebras (i.e., Boolean algebras equipped with a modal operator) can be viewed as a left adjoint. However, the codomain category that is involved here is not immediately obvious: it is not the category of `perfect modal algebras' in the usual sense. We conjecture that the distributive envelope constructions developed in Section~\ref{sec:envelopes} of this paper may be used to define a category in which the canonical extension for lattices is a left adjoint. We also leave the actual development of this line of thought to future research.

\bibliographystyle{amsplain}

\bibliography{latticebib}

\end{document}